\tikzset{black node/.style={draw, circle, fill = black, minimum size = 4pt, inner sep = 0pt}}
\tikzset{hblack node/.style={draw, circle, fill = black, minimum size = 5pt, inner sep = 0pt}}
\newcommand{\remove}[1]{}
\definecolor{Gray}{gray}{0.90}
\definecolor{Black}{rgb}{0,0, 0}
\definecolor{Blue}{rgb}{0, 0 ,1}
\definecolor{Red}{rgb}{1, 0 ,0}
\definecolor{RRed}{rgb}{.5, 0 ,0}
\definecolor{White}{rgb}{1, 1, 1}
\definecolor{Yellow}{rgb}{.55,.55,0}
\definecolor{mustard}{rgb}{1.0, 0.86, 0.35}
\definecolor{applegreen}{rgb}{0.55, 0.71, 0.0}
\newcommand{\mynewtheorem}[2]{
\newaliascnt{#1}{dummy}
\newtheorem{#1}[#1]{#2}
\aliascntresetthe{#1}
\expandafter\def\csname #1autorefname\endcsname{#2}
}
\theoremstyle{plain}
\theoremstyle{definition}
\theoremstyle{remark}
\newcommand{\cupall}{\pmb{\pmb{\bigcup}}}
\newcommand*\samethanks[1][\value{footnote}]{\footnotemark[#1]}
\newcommand{\cactobs}[1]{{\bf obs}({\cal A}_{#1}({\cal S}))}
\newcommand{\kapexmo}[1]{{\cal A}_{#1}({\cal S})}
\title{{Minor-Obstructions for Apex Sub-unicyclic  Graphs\thanks{\href{http://www.cs.upc.edu/~sedthilk/oapf/apexpse_foto.jpg}{}Emails:\! {\scriptsize  \sf \href{mailto:livaditisalex@gmail.com}{livaditisalex@gmail.com},\!
\href{mailto:alexsingh@di.uoa.gr}{alexsingh@di.uoa.gr},\!
\href{mailto:giannosstam@di.uoa.gr}{giannosstam@di.uoa.gr},\!
\href{mailto:sedthilk@thilikos.info}{sedthilk@thilikos.info},\!
\href{mailto:kostistsatsanis@gmail.com}{kostistsatsanis@gmail.com},\!
\href{mailto:vasiliki.velona@upf.edu}{vasiliki.velona@upf.edu}.}
}}}
\author{\bigskip 
Alexandros Leivaditis%
\thanks{Department of Mathematics, National and Kapodistrian University of Athens, Athens, Greece.}%
\and 
Alexandros Singh%
\thanks{Inter-university Postgraduate Programme ``Algorithms, Logic, and Discrete Mathematics'' (ALMA).}
\and 
Giannos Stamoulis%
\samethanks[3]  
\and
Dimitrios  M. Thilikos\thanks{AlGCo project-team, LIRMM, CNRS, Universit\'e de Montpellier, Montpellier, France.}$\ ^,$\samethanks[2]$\ ^,$\samethanks[3]$\ ^,$\thanks{Supported by projects DEMOGRAPH (ANR-16-CE40-0028) and ESIGMA (ANR-17-CE23-0010).} \and Konstantinos Tsatsanis\samethanks[3]\and
Vasiliki Velona\thanks{Department of Economics,  Universitat Pompeu Fabra, Barcelona, Spain.}$\ ^,$\thanks{Department of Mathematics, Universitat Politècnica  de Catalunya,  Barcelona, Spain.}$\ ^,$\thanks{Supported under an FPI grant from the MINECO research project MTM2015-67304-PI.}}
\date{}
\begin{document}
 
 \maketitle

 \vspace{-1cm}
 \begin{abstract}
  \noindent A graph is {\em sub-unicyclic} if it contains at most one cycle. We also say that a graph $G$ is  {\em $k$-apex sub-unicyclic} if it can become sub-unicyclic by removing $k$ of its vertices.
  We identify 29 graphs that are the minor-obstructions of the class of {$1$-apex} sub-unicyclic graphs,  i.e., the set of all minor minimal graphs that do not belong in this class. 
  For bigger values of $k$, we give an exact structural characterization of all the cactus graphs that are minor-obstructions  of {$k$-apex} sub-unicyclic graphs and we enumerate them. This implies that, for every $k$,   the class of $k$-apex sub-unicyclic graphs has  at least $0.34\cdot k^{-2.5}(6.278)^{k}$ minor-obstructions.
 \end{abstract}
 
 \medskip
 
\noindent{\bf Keywords:} Graph Minors, Obstruction set, Sub-unicyclc graphs.
  
 \section{Introduction}
%
 
 A graph is called {\em unicyclic}
 \cite{Harary91Grap} if it contains exactly one cycle and is called {\em sub-unicyclic} if it contains at 
 most one cycle.  Notice that sub-unicyclic graphs are exactly the subgraphs of unicyclic graphs.\medskip 
 
A graph $H$ is a minor of a graph $G$ if a graph isomorphic to $H$
can be obtained by some subgraph of $G$ after a series of contractions.
We say that a graph class ${\cal G}$ is {\em minor-closed}
if every minor of every graph in ${\cal G}$ also belongs in ${\cal G}$.
We also define ${\bf obs}({\cal G})$, called the {\em minor-obstruction set} of ${\cal G}$,  as the set of minor-minimal 
graphs not in ${\cal G}$. It is easy to verify that if ${\cal G}$ is minor-closed, then $G\in{\cal G}$ iff $G$ excludes all graphs in ${\bf obs}({\cal G})$  as a minor.  Because of Roberson and Seymour theorem~\cite{RobertsonSXX}, ${\bf obs}({\cal G})$ is finite for every minor-closed graph class. That way,
${\bf obs}({\cal G})$ can be seen as a {\em complete characterization}
of ${\cal G}$ via a finite set of forbidden graphs.   The identification of 
${\bf obs}({\cal G})$ for distinct minor-closed classes has attracted a lot of attention in Graph Theory (see~\cite{Mattman16forb,Adler08open} for related surveys). 

There are several ways to construct minor-closed graph classes from others (see~\cite{Mattman16forb}). A popular one is to consider the set of all {\em $k$-apices} of a graph class ${\cal G}$, denoted by 
${\cal A}_{k}({\cal G})$, that contains all graphs that can give a graph in ${\cal G}$, after the removal 
of at most $k$ vertices. It is easy to verify that if ${\cal G}$ is minor closed, then the same holds 
for ${\cal A}_{k}({\cal G})$ as well, for every non-negative integer $k$. It was also proved in~\cite{AdlerGK08comp}  that the construction of  ${\bf obs}({\cal A}_{k}({\cal G}))$, given ${\bf obs}({\cal G})$ and $k$, is a computable problem.

A lot of research has been oriented to the (partial) identification of the minor-obstructions of the $k$-apices, of several minor-closed graph classes.
For instance, 
 ${\bf obs}({\cal A}_{k}({\cal G}))$ has been identified for $k\in\{1,\ldots,7\}$ when ${\cal G}$ is the set of edgeless  graphs~\cite{CattellDDFL00onco,DinneenX02mino,DinneenV12obst}, 
and for $k\in\{1,2\}$ when ${\cal G}$ is the set of acyclic graphs~\cite{DinneenCF01forb}.
Recently, ${\bf obs}({\cal A}_{1}({\cal G}))$ was identified 
when ${\cal G}$ is the class of outerplanar graphs~\cite{Ding2016} and 
when ${\cal G}$ is the class of cactus graphs (as  announced in~\cite{DziobiakD2013}). 
A particularly popular problem is  identification of 
${\bf obs}({\cal A}_{k}({\cal G}))$ when ${\cal G}$ is the class of planar graphs  (see e.g.,~\cite{LiptonMMPRT16sixv,Mattman16forb,Yu06more}). The best  advance 
on this question was done recently by Jobson and Kézdy \cite{JobsonK18allm} who identified all 2-connected  minor-obstructions of 1-apex planar graphs (see also~\cite{MattmanP16thea,Pierce14PhDThesis}).
Another recent result  is the identification of  ${\bf obs}({\cal A}_{1}({\cal P}))$  where  ${\cal P}$
is the class of all pseudoforests, i.e., graphs where all connected components are sub-unicyclic~\cite{Leivaditis18mino}.

A different direction is to upper-bound the size 
of the graphs ${\bf obs}({\cal A}_{k}({\cal G}))$ by some function of $k$. In this direction,
it was proved in~\cite{FominLMS12plan} that the size of the graphs in ${\bf obs}({\cal A}_{k}({\cal G}))$ is bounded by a  polynomial on $k$ in the case where the ${\bf obs}({\cal G})$ contains some planar graph (see also~\cite{ZorosPhD17}). Another line of research is  to prove lower bounds to the size of  ${\bf obs}({\cal A}_{k}({\cal G}))$. In this direction Michael Dinneen  proved in~\cite{Dinneen97}
that, if all graphs in ${\bf obs}({\cal G})$ are connected, then  $|{\bf obs}({\cal A}_{k}({\cal G}))|$ is exponentially big. To show this,  Dinneen proved a more general structural theorem
claiming that, under the former connectivity assumption, every connected component of a non-connected graph
in  ${\bf obs}({\cal A}_{k}({\cal G}))$ is a graph in ${\bf obs}({\cal A}_{k’}({\cal G}))$, for some $k’<k$. Another way to prove lower bounds to $|{\bf obs}({\cal A}_{k}({\cal G}))|$
is to completely characterize, for every $k$, the set 
${\bf obs}({\cal A}_{k}({\cal G}))\cap {\cal H}$, for some graph class ${\cal H}$, 
and then lower bound $|{\bf obs}({\cal A}_{k}({\cal G}))|$ by counting (asymptotically or exactly) all the graphs in ${\bf obs}({\cal A}_{k}({\cal G}))\cap {\cal H}$. This last approach has been applied in~\cite{RueST12oute} 
when ${\cal G}$ is the class of acyclic graphs and ${\cal H}$ is the class of outerplanar graphs (see also~\cite{GiannopoulouDT12forb,KoutsonasTY14oute}).
\medskip

\paragraph{Our results.}
In this paper we study the set ${\bf obs}({\cal A}_{k}({\cal S}))$ where  ${\cal S}$ is the class of sub-unicyclic graphs. Certainly the class ${\cal S}$ is minor-closed (while this is not the case for unicyclic graphs). It is easy to see that ${\bf obs}({\cal S})=\{2K_{3},K_{4}^{-},Z\}$, where 
$2K_{3}$ is the disjoint union of two triangles, $K_{4}^{-}$ is the complete graph on $4$ vertices minus an edge, and $Z$ the  {\em butterfly graph}, obtained by $2K_{3}$ after identifying 
two vertices of its triangles (we call the result of this identification {\em central vertex} of $Z$). 

Our first result is the identification of  ${\bf obs}({\cal A}_{1}({\cal S}))$, i.e., the minor-obstruction set of 
all $1$-apices of sub-unicyclic graphs (Section~\ref{asos}). This set contains 29 graphs 
that is the union of two sets ${\cal L}_{0}$ and ${\cal L}_{1}$, depicted in Figures~\ref{granting} and~\ref{bringing} respectively. An important 
ingredient of our proof is the notion of a {\sl nearly-biconnected graph}, that is any graph
that is either biconnected or it contains only one cut-vertex joining two blocks where one of them is a triangle. We first prove that ${\cal L}_{0}$ is the set of minor-obstructions in ${\bf obs}({\cal A}_{k}({\cal S}))$  that are not nearly-biconnected. The proof is completed by proving that the nearly-biconnected graphs in ${\bf obs}({\cal A}_{1}({\cal S}))$ 
are also minor-obstructions for 1-apex pseudoforests, i.e., members of ${\bf obs}({\cal A}_{1}({\cal P}))$. As this set is known from~\cite{Leivaditis18mino}, we can identify the remaining obstructions 
in  ${\bf obs}({\cal A}_{1}({\cal S}))$, that is the set ${\cal L}_{1}$, by exhaustive search.
 
Our second result  is an exponential  lower bound on  the size ${\bf obs}({\cal A}_{k}({\cal S}))$ (Section~\ref{opera}).
For this we completely characterize, for every $k$, the set  ${\bf obs}({\cal A}_{k}({\cal S}))\cap {\cal K}$  where ${\cal K}$ is the set of all cacti (graphs whose all blocks are either edges or cycles).
In particular, we first prove that each connected cactus obstruction in ${\bf obs}({\cal A}_{k}({\cal S}))$ 
can be obtained by identifying non-central vertices of $k+1$ butterfly graphs
and then we give a characterization of disconnected cacti in ${\bf obs}({\cal A}_{k}({\cal S}))$
in terms of obstructions in ${\bf obs}({\cal A}_{k’}({\cal S}))$ for $k’<k$ (we stress that 
here the result of Dinneen in~\cite{Dinneen97} does not apply immediately, as not all graphs in ${\bf obs}({\cal S})$ are connected). 

After identifying ${\bf obs}({\cal A}_{k}({\cal S}))\cap {\cal K}$,
the next step is to count the number of its elements (Section~\ref{stripop}). To that end, we employ the framework of the \emph{Symbolic Method} and the corresponding techniques of \emph{singularity analysis}, as they were presented in~\cite{flajolet2009analytic}. The combinatorial construction that we devise relies critically on the \emph{Dissymmetry Theorem for Trees}, by which one can move from the enumeration of rooted tree structures to unrooted ones (see~\cite{bergeron1998combinatorial} for a comprehensive account of these techniques, in the context of the \emph{Theory of Species}).
 \begin{equation*}
|{\bf obs}({\cal A}_{k}({\cal S}))\cap {\cal K}| \sim c{\cdot k^{-5/2}}\cdot x^{k},
\end{equation*}
where $c ~{\approx}~  0.33995$ and $x  ~{\approx}~  6.27888$.
 This provides an exponential lower bound for $|{\bf obs}({\cal A}_{k}({\cal S}))|$.
 \section{Preliminaries}

 \textbf{Sets, integers, and functions.}
 We denote by $\Bbb{N}$ the set of all non-negative integers and we
 set $\Bbb{N}^+=\Bbb{N}\setminus\{0\}.$
 Given two integers $p$ and $q,$  we set $[p,q]=\{p,\ldots,q\}$
 and given a $k\in\Bbb{N}^+$ we denote $[k]=[1,k].$ Given a set $A,$ we denote by $2^{A}$ the set of all its subsets and we define 
 $\binom{A}{2}:=\{e\mid e\in 2^{A}\wedge |e|=2\}.$
If ${\cal S}$ is a collection of objects where the operation $\cup$ is defined, then we  denote $\cupall{\cal S}=\bigcup_{X\in{\cal S}}X.$

 \bigskip
 \noindent
 \textbf{Graphs}. All the graphs in this paper are finite, undirected, and without  loops or multiple edges.  Given a graph $G,$ we denote by $V(G)$ the set of vertices of  $G$ and by $E(G)$ the set of the edges of $G.$ We refer to the quantity $|V(G)|$ as the {\em size} of $G$.
 For an edge $e=\{x,y\}\in E(G),$ we use instead the notation $e=xy,$ that is equivalent to  $e=yx.$
 {Given a vertex $v \in V(G),$ we define the \emph{neighborhood} of
  $v$ as $N_G(v) = \{u \mid u \in V(G), uv \in E(G)\}$. If $X\subseteq V(G),$ then we write $N_{G}(X)=(\bigcup_{v\in X}N_{G}(v))\setminus X.$
 The {\em  degree} of a vertex $v$ in $G$ is the quantity $|N_{G}(v)|.$
 Given two graphs $G_{1},G_{2}$, we define the \emph{union} of $G_{1},G_{2}$ as
 the graph $G_{1}\cup G_{2}=(V(G_{1})\cup V(G_{2}),E(G_{1})\cup E(G_{2}))$.

 A \emph{subgraph} of a graph $G=(V,E)$ is every graph $H$
 where $V(H)\subseteq V(G)$ and $E(H)\subseteq E(G).$
 If $S \subseteq V(G),$ the subgraph of $G$ \emph{induced by} $S,$ denoted by $G[S],$ is the graph $(S, E(G) \cap \binom{S}{2}).$
 We also define $G \setminus S$ to be the subgraph of $G$ induced by $V(G) \setminus S.$
 If $S \subseteq E(G),$ we denote by $G \setminus S$ the graph $(V(G), E(G) \setminus S).$
 Given a vertex $x\in V(G)$ we define $G\setminus x=G\setminus \{x\}$ and given an edge $e\in E(G)$ we define $G\setminus e=G\setminus \{e\}.$

An edge $e\in E(G)$ is a {\em bridge} of $G$ if $G$ has less connected components than $G\setminus e$.
Given a set $S\subseteq V(G),$ we say that $S$ is a {\em  separator} of $G$ if $G$ has less connected components than $G\setminus S.$ 
Let $G$ be a graph and $S\subseteq V(G)$ and let $V_{1},\ldots,V_{q}$ be  the vertex sets of the connected components of $G\setminus S.$ We define ${\cal C}(G,S)=\{G_{1},\ldots,G_{q}\}$ where, for $i\in [q],$
$G_{i}$ is the graph obtained from $G[V_{i}\cup S]$ if we add all edges  between vertices in $S.$ 
Given a vertex $x\in V(G)$ we define ${\cal C}(G,x) = {\cal C}(G,\{x\}).$

 A vertex $v\in V(G)$ is a {\em  cut-vertex} of $G$ if $\{v\}$ is a separator of $G.$ 
 A {\em  block} of a graph $G$ is a maximal biconnected subgraph of $G$.

 By $K_{r}$ we denote the complete graph on $r$ vertices, also known as  {\em $r$-clique}. Similarly,  by $K_{r_{1},r_{2}}$ we denote the complete bipartite graph of which one part has $r_{1}$ vertices and the other $r_{2}.$ We denote by $K_{r}^{-}$ the graph obtained by $K_{r}$ after removing any edge. 
 For an $r\geq 3,$  we denote by  $C_{r}$ the cycle on $r$ vertices.
 Given a graph $G$ and an $r\geq 1$ we denote by $rG$ the graph with $r$ connected components, each isomorphic to $G.$\medskip

Given a graph  class ${\cal G}$ and a graph $G$, a vertex $v\in V(G)$ is a 
 {\em ${\cal G}$-apex of $G$} if $G\setminus v\in{\cal G}$.

 \paragraph{Minors.} We define $G / e,$ the graph obtained from the graph $G$ by {\em contracting} an edge $e = xy \in E(G),$ to be the graph obtained by replacing the edge $e$ by a new vertex $v_{e}$ which becomes adjacent to all neighbors of $x,y$ (apart from $y$ and $x$).  
 Given two graphs $H$ and $G$ we say that $H$ is a {\em  minor} of $G,$ denoted by $H\leq G,$ if $H$ can be obtained by some subgraph of $G$ after contracting edges.
 We say that $H$ is a {\em  proper minor} of $G$ if it is a minor of $G$ but is not isomorphic to $G.$ Given a set ${\cal H}$ of graphs, we write ${\cal H}\leq G$  to denote that $\exists H\in{\cal H}: H\leq G.$

  \paragraph{Sub-unicyclic Graphs.}
  We now resume  some basic concepts that we already mentioned in the introduction.
 A {\em sub-unicyclic} graph is a graph that contains at most one cycle. A graph is a {\em pseudoforest} if all its connected components are sub-unicyclic.    We denote by ${\cal S}$ (resp. ${\cal P}$)  the set of all sub-unicyclic graphs (resp. pseudoforests).
The study of the class ${\cal P}$ dates back in~\cite{Dantzig63,PicardQ82anet}. 
 Clearly, ${\cal S}\subseteq {\cal P}$ and therefore ${\cal A}_{k}({\cal S})\subseteq {\cal A}_{k}({\cal P})$, for every $k\in\Bbb{N}$.
 For simplicity, instead of saying that a graph is  1-apex sub-unicyclic/pseudoforest/acyclic 
 we just say {\em apex sub-unicyclic/pseudoforest/acyclic}.

Given a graph $G$ and a set $S\subseteq V(G)$ we say that $S$ is an {\em apex sub-unicyclic set} (resp. {\em apex forest set}) of $G$ if $G\setminus S$ is sub-unicyclic (resp. forest). If $|S|\leq k$, for some $k\in\mathbb{N}$, then we say that $S$ a {\em $k$-apex sub-unicyclic set} (resp. {\em $k$-apex forest set}) of $G$ if $G\setminus S$ is sub-unicyclic (resp. forest).

 \section{Minor-obstructions for apex sub-unicyclic graphs}
 \label{asos}

In this section we will identify the set ${\bf obs}({\cal A}_{1}({\cal S}))$. Part of it will be the set  
${\cal L}_{0}$  containing the graphs depicted in \autoref{granting}.

 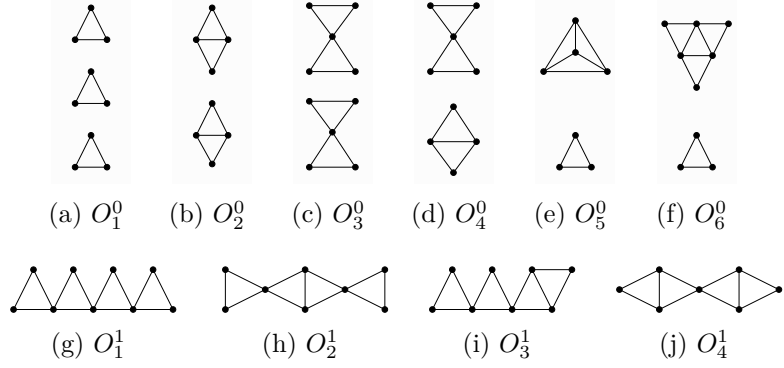
\begin{figure}[!h]
  \centering
  \begin{tabular}{ c c c c c c }
   \subcaptionbox{${O}_{1}^{0}$\label{writings}}{
    \scalebox{.53}{\begin{tikzpicture}[myNode/.style = black node, scale=0.8]
     
     \begin{scope}[yshift=-1cm]
     
     \node[myNode] (n1) at (1.5,1) {}; 
     \node[myNode] (n1) at (2.5,1) {};
     \node[myNode] (n1) at (2,2) {};
     \node[myNode] (n1) at (1.5,3) {}; 
     \node[myNode] (n1) at (2.5,3) {};
     \node[myNode] (n1) at (2,4) {};
     \node[myNode] (n1) at (1.5,5) {}; 
     \node[myNode] (n1) at (2.5,5) {};
     \node[myNode] (n1) at (2,6) {};
     
     \draw (1.5,1) -- (2.5,1);
     \draw (2.5,1) -- (2,2);
     \draw (2,2) -- (1.5,1);
     \draw (1.5,3) -- (2.5,3);
     \draw (2.5,3) -- (2,4);
     \draw (2,4) -- (1.5,3);
     \draw (1.5,5) -- (2.5,5);
     \draw (2.5,5) -- (2,6);
     \draw (2,6) -- (1.5,5);
     
     \end{scope}
     \begin{scope}[on background layer]
     \fill[gray!2] (0.75,5.5) -- (3.25,5.5) -- (3.25,-0.5) -- (0.75,-0.5) -- cycle;
     \end{scope}
     \end{tikzpicture}}} &
   \subcaptionbox{${O}_{2}^{0}$\label{magnetic}}{
    \scalebox{.53}{\begin{tikzpicture}[myNode/.style = black node, scale=0.8]
     \node[myNode] (n1) at (1.5,1) {}; 
     \node[myNode] (n1) at (2.5,1) {};
     \node[myNode] (n1) at (2,2) {};
     \node[myNode] (n1) at (2,0.1) {};
     
     \node[myNode] (n1) at (1.5,4) {}; 
     \node[myNode] (n1) at (2.5,4) {};
     \node[myNode] (n1) at (2,5) {};
     \node[myNode] (n1) at (2,3) {};
     
     \draw (1.5,1) -- (2.5,1);
     \draw (2.5,1) -- (2,2);
     \draw (2,2) -- (1.5,1);
     \draw (2,0.1) -- (1.5,1);
     \draw (2,0.1) -- (2.5,1);
     
     \draw (1.5,4) -- (2.5,4);
     \draw (2.5,4) -- (2,5);
     \draw (2,5) -- (1.5,4);
     \draw (2,3) -- (1.5,4);
     \draw (2,3) -- (2.5,4);
     
     \begin{scope}[on background layer]
     \fill[gray!2] (0.75,5.5) -- (3.25,5.5) -- (3.25,-0.5) -- (0.75,-0.5) -- cycle;
     \end{scope}
     
     \end{tikzpicture}}} &
   \subcaptionbox{${O}_{3}^{0}$\label{consiste}}{
    \scalebox{.53}{\begin{tikzpicture}[myNode/.style = black node, scale=0.8]
     
     \begin{scope}[xshift=0.25cm]
     
     \node[myNode] (n1) at (1,0) {}; 
     \node[myNode] (n2) [right =of n1] {};
     \coordinate (Middle) at ($(n1)!0.5!(n2)$);
     \node[myNode] (n3) [above =0.8cm of Middle] {};
     \node[myNode] (n4) [above =1.5cm of n1] {};
     \node[myNode] (n5) [above =1.5cm of n2] {};
     
     \node[myNode] (n6) at (1,3) {}; 
     \node[myNode] (n7) [right =of n6] {};
     \coordinate (Middle2) at ($(n6)!0.5!(n7)$);
     \node[myNode] (n8) [above =0.8cm of Middle2] {};
     \node[myNode] (n9) [above =1.5cm of n6] {};
     \node[myNode] (n10) [above =1.5cm of n7] {};

     \foreach \from/\to in               {n1/n2,n1/n3,n2/n3,n4/n5,n4/n3,n5/n3}
     \draw (\from) -- (\to);
     
     \foreach \from/\to in               {n6/n7,n6/n8,n7/n8,n10/n9,n10/n8,n9/n8}
     \draw (\from) -- (\to);
     \end{scope}
     
     \begin{scope}[on background layer]
     \fill[gray!2] (0.75,5.5) -- (3.25,5.5) -- (3.25,-0.5) -- (0.75,-0.5) -- cycle;
     \end{scope}
     
     \end{tikzpicture}}} &
   \subcaptionbox{${O}_{4}^{0}$\label{narcotic}}{
    \scalebox{.53}{\begin{tikzpicture}[myNode/.style = black node, scale=0.8]
     
     \begin{scope}[xshift=0.25cm, yshift=0.8cm]
     
     \node[myNode] (n1) at (1,0) {}; 
     \node[myNode] (n2) [right =of n1] {};
     \coordinate (Middle) at ($(n1)!0.5!(n2)$);
     \node[myNode] (n3) [below =0.7cm of Middle] {};
     \node[myNode] (n4) [above =0.8cm of Middle] {};

     \foreach \from/\to in               {n1/n2,n1/n3,n2/n3,n4/n1,n4/n2}
     \draw (\from) -- (\to);
     
     \node[myNode] (n6) at (1,2.2) {}; 
     \node[myNode] (n7) [right =of n6] {};
     \coordinate (Middle2) at ($(n6)!0.5!(n7)$);
     \node[myNode] (n8) [above =0.8cm of Middle2] {};
     \node[myNode] (n9) [above =1.5cm of n6] {};
     \node[myNode] (n10) [above =1.5cm of n7] {};
     
     \foreach \from/\to in               {n6/n7,n6/n8,n7/n8,n10/n9,n10/n8,n9/n8}
     \draw (\from) -- (\to);
     
     \end{scope}
     
     \begin{scope}[on background layer]
     \fill[gray!2] (0.75,5.5) -- (3.25,5.5) -- (3.25,-0.5) -- (0.75,-0.5) -- cycle;
     \end{scope}
     \end{tikzpicture}}} &
   \subcaptionbox{${O}_{5}^{0}$\label{unguided}}{
    \scalebox{.53}{\begin{tikzpicture}[myNode/.style = black node, scale=0.8]
     \begin{scope}[yshift=-3cm]
     
     \node[myNode] (n1) at (1,6) {}; 
     \node[myNode] (n1) at (3,6) {};
     \node[myNode] (n1) at (2,6.6) {};
     \node[myNode] (n1) at (2,7.6) {};
     
     \node[myNode] (n1) at (1.5,3) {}; 
     \node[myNode] (n1) at (2.5,3) {};
     \node[myNode] (n1) at (2,4) {};
     
     \draw (1,6) -- (3,6);
     \draw (1,6) -- (2,6.6);
     \draw (1,6) -- (2,7.6);
     \draw (3,6) -- (2,6.6);
     \draw (3,6) -- (2,7.6);
     \draw (2,6.6) -- (2,7.6);
     
     \draw (1.5,3) -- (2.5,3);
     \draw (2.5,3) -- (2,4);
     \draw (2,4) -- (1.5,3);
     
     \end{scope}
     
     \begin{scope}[on background layer]
     \fill[gray!2] (0.75,5.5) -- (3.25,5.5) -- (3.25,-0.5) -- (0.75,-0.5) -- cycle;
     \end{scope}
     \end{tikzpicture}}} &
   \subcaptionbox{${O}_{6}^{0}$\label{consists}}{
    \scalebox{.53}{\begin{tikzpicture}[myNode/.style = black node, scale=0.8]
     \begin{scope}[yshift=-3cm]
     
     \node[myNode] (n1) at (1,7.5) {};
     \node[myNode] (n1) at (3,7.5) {};
     \node[myNode] (n1) at (2,7.5) {};
     \node[myNode] (n1) at (1.5,6.5) {};
     \node[myNode] (n1) at (2.5,6.5) {};
     \node[myNode] (n1) at (2,5.5) {};
     
     \node[myNode] (n1) at (1.5,3) {}; 
     \node[myNode] (n1) at (2.5,3) {};
     \node[myNode] (n1) at (2,4) {};
     
     \draw (1.5,3) -- (2.5,3);
     \draw (2.5,3) -- (2,4);
     \draw (2,4) -- (1.5,3);
     
     \draw (1,7.5) -- (2,7.5);
     \draw (3,7.5) -- (2,7.5);
     \draw (1,7.5) -- (1.5,6.5);
     \draw (2.5,6.5) -- (3,7.5);
     \draw (2.5,6.5) -- (2,7.5);
     \draw (1.5,6.5) -- (2,7.5);
     \draw (2.5,6.5) -- (1.5,6.5);
     \draw (2.5,6.5) -- (2,5.5);
     \draw (1.5,6.5) -- (2,5.5);
     
     \end{scope}    
     \begin{scope}[on background layer]
     \fill[gray!2] (0.75,5.5) -- (3.25,5.5) -- (3.25,-0.5) -- (0.75,-0.5) -- cycle;
     \end{scope}
     
     \end{tikzpicture}}} \\
  \end{tabular}\bigskip
  
  \begin{tabular}{ c c c c c c c c c }
   
   \subcaptionbox{${O}_{1}^{1}$\label{appetite}}{
    \scalebox{.53}{\begin{tikzpicture}[myNode/.style = black node]
     \node[myNode] (n1) at (0,0) {};
     \node[myNode] (n1) at (1,0) {};
     \node[myNode] (n1) at (2,0) {};
     \node[myNode] (n1) at (3,0) {};
     \node[myNode] (n1) at (4,0) {};
     \node[myNode] (n1) at (0.5,1) {};
     \node[myNode] (n1) at (1.5,1) {};
     \node[myNode] (n1) at (2.5,1) {};
     \node[myNode] (n1) at (3.5,1) {};
     
     \draw (0,0) -- (1,0);
     \draw (1,0) -- (2,0);
     \draw (2,0) -- (3,0);
     \draw (3,0) -- (4,0);
     \draw (0,0) -- (0.5,1);
     \draw (1,0) -- (0.5,1);
     \draw (1,0) -- (1.5,1);
     \draw (2,0) -- (1.5,1);
     \draw (2,0) -- (2.5,1);   
     \draw (3,0) -- (2.5,1);
     \draw (3,0) -- (3.5,1);
     \draw (4,0) -- (3.5,1);
     \end{tikzpicture}} }&
   \subcaptionbox{${O}_{2}^{1}$\label{adorning}}{
    \scalebox{.53}{\begin{tikzpicture}[myNode/.style = black node]
     
     \node[myNode] (n1) at (1,1) {};
     \node[myNode] (n1) at (1,2) {};
     \node[myNode] (n1) at (2,1.5) {};
     \node[myNode] (n1) at (3,1) {};
     \node[myNode] (n1) at (3,2) {};
     \node[myNode] (n1) at (4,1.5) {};
     \node[myNode] (n1) at (5,1) {};
     \node[myNode] (n1) at (5,2) {};
     
     \draw (1,1) -- (1,2);
     \draw (1,1) -- (2,1.5);
     \draw (1,2) -- (2,1.5);
     \draw (2,1.5) -- (3,1);
     \draw (2,1.5) -- (3,2);
     \draw (3,1) -- (3,2);
     \draw (5,1) -- (4,1.5);
     \draw (5,2) -- (4,1.5);
     \draw (4,1.5) -- (3,1);
     \draw (4,1.5) -- (3,2);
     \draw (5,1) -- (5,2);
     
     \end{tikzpicture}}}&
   \subcaptionbox{${O}_{3}^{1}$\label{skeleton}}{
    \scalebox{.53}{\begin{tikzpicture}[myNode/.style = black node]
     \node[myNode] (n1) at (0,0) {};
     \node[myNode] (n1) at (1,0) {};
     \node[myNode] (n1) at (2,0) {};
     \node[myNode] (n1) at (3,0) {};
     \node[myNode] (n1) at (0.5,1) {};
     \node[myNode] (n1) at (1.5,1) {};
     \node[myNode] (n1) at (2.5,1) {};
     \node[myNode] (n1) at (3.5,1) {};
     
     \draw (0,0) -- (1,0);
     \draw (1,0) -- (2,0);
     \draw (2,0) -- (3,0);
     \draw (0,0) -- (0.5,1);
     \draw (1,0) -- (0.5,1);
     \draw (1,0) -- (1.5,1);
     \draw (2,0) -- (1.5,1);
     \draw (2,0) -- (2.5,1);
     \draw (3,0) -- (2.5,1);
     \draw (3,0) -- (3.5,1);
     \draw (2.5,1) -- (3.5,1);     \end{tikzpicture}}} 
  &
   
   \subcaptionbox{${O}_{4}^{1}$\label{friesian}}{
    \scalebox{.53}{\begin{tikzpicture}[myNode/.style = black node]
     
     \node[myNode] (n1) at (1,1.5) {};
     \node[myNode] (n1) at (2,2) {};
     \node[myNode] (n1) at (2,1) {};
     \node[myNode] (n1) at (3,1.5) {};
     \node[myNode] (n1) at (4,2) {};
     \node[myNode] (n1) at (4,1) {};
     \node[myNode] (n1) at (5,1.5) {};
     
     \draw (1,1.5) -- (2,1);
     \draw (1,1.5) -- (2,2);
     \draw (2,1) -- (2,2);
     \draw (2,1) -- (3,1.5);
     \draw (2,2) -- (3,1.5);
     \draw (3,1.5) -- (4,1);
     \draw (3,1.5) -- (4,2);
     \draw (4,1) -- (4,2);
     \draw (4,1) -- (5,1.5);
     \draw (4,2) -- (5,1.5);
     
     \end{tikzpicture}}}

     \\
   
  \end{tabular}
  \caption{The set ${\cal L}_{0}$  of obstructions for ${\cal A}_{1}({\cal S})$ that are not nearly-biconnected.}
  \label{granting}
 \end{figure}

 \subsection{Structure for general obstructions}
 
 We need the following lemma on the general structure of  the obstructions of ${\cal A}_{k}({\cal S})$.
\begin{lemma} 
\label{segments}
Let $G \in {\bf obs}({\cal A}_{k}({\cal S})), k\geq 0$.  Then 
the following hold:
\begin{enumerate}
\item The minimum degree of a vertex in $G$ is at least 2.
\item $G$ has no bridges.
\item All of its vertices of degree 2 have adjacent neighbors.
\end{enumerate}
\end{lemma}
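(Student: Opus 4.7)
The plan is to derive, in each of the three cases, a contradiction with the minor-minimality of $G$ not belonging to ${\cal A}_{k}({\cal S})$. For each forbidden local feature, I will produce a proper minor $G'$ of $G$; since $G \in {\bf obs}({\cal A}_{k}({\cal S}))$, we have $G' \in {\cal A}_{k}({\cal S})$, so there exists $S' \subseteq V(G')$ with $|S'| \leq k$ and $G' \setminus S' \in {\cal S}$. The goal is then to lift $S'$ to a $k$-apex sub-unicyclic set $S$ of $G$. Throughout, it is convenient to identify sub-unicyclicity of a simple graph $H$ with the cyclomatic inequality $\mu(H) := |E(H)| - |V(H)| + c(H) \leq 1$, where $c(H)$ is the number of connected components of $H$; the lifts will succeed because in each case the operation producing $G \setminus S$ from $G' \setminus S'$ preserves $\mu$.

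For item 1, assume that $v \in V(G)$ has $\deg_G(v) \leq 1$, and use the proper minor $G' = G \setminus v$. Setting $S = S'$, the graph $G \setminus S$ is obtained from $G' \setminus S'$ by adding $v$ either as an isolated vertex (if $\deg_G(v) = 0$, or if its unique neighbor lies in $S$) or as a pendant to its unique neighbor; both additions preserve $\mu$. For item 2, assume $e = xy$ is a bridge of $G$, and use the proper minor $G' = G \setminus e$. Setting $S = S'$: if $\{x,y\} \cap S \neq \emptyset$ then $G \setminus S = G' \setminus S'$ is already sub-unicyclic; otherwise, since $e$ was a bridge of $G$, the vertices $x$ and $y$ lie in distinct components of $G' \setminus S'$, so reinserting $e$ merges two components with exactly one new edge and leaves $\mu$ unchanged.

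Item 3 is where the main subtlety sits. Assume $v$ has degree $2$ with non-adjacent neighbors $u$ and $w$. The non-adjacency of $u, w$ is what ensures that the contraction $G' = G / uv$ is a simple graph (the only candidate common neighbor of $u$ and $v$ was $w$, so no parallel edges arise), so $G'$ is an honest proper minor of $G$. Let $z$ denote the contracted vertex and let $S'$ be the guaranteed $k$-apex sub-unicyclic set of $G'$. If $z \notin S'$, set $S = S' \subseteq V(G)$; then $G \setminus S$ arises from $G' \setminus S'$ by a vertex split of $z$ into $u$ and $v$ joined by the edge $uv$, with $w$ reassigned to $v$ and every other former $G'$-neighbor of $z$ reassigned to $u$. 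This split adds exactly one vertex and one edge while fixing the number of components, so $\mu$ is preserved. If $z \in S'$, set $S = (S' \setminus \{z\}) \cup \{u\}$, which has $|S| \leq k$ since $u \notin V(G')$; then $G \setminus S$ is $G' \setminus S'$ augmented by $v$, which appears either as an isolated vertex (if $w \in S$) or as a pendant to $w$ (if $w \notin S$), again preserving $\mu$.

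In each case $\mu(G \setminus S) \leq 1$ and thus $G \setminus S \in {\cal S}$, contradicting $G \notin {\cal A}_{k}({\cal S})$. The bulk of the work lies in item 3: items 1 and 2 rely on near-trivial proper minors plus straightforward additions of pendants or isolated vertices, whereas item 3 requires orchestrating the contraction, verifying simplicity via the non-adjacency hypothesis on $u$ and $w$, and handling both the $z \in S'$ and $z \notin S'$ subcases when lifting $S'$ back to $V(G)$.
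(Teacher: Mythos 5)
Your proof is correct and follows essentially the same plan as the paper's: delete the offending vertex or edge for items (1) and (2), contract an edge incident to the degree-two vertex for item (3), then lift an apex set of the proper minor back to $G$ with the same case split on whether the contracted vertex belongs to the apex set. The only cosmetic difference is that you track sub-unicyclicity through the cyclomatic number $\mu(H)=|E(H)|-|V(H)|+c(H)$, whereas the paper reasons directly about cycles (for (1)--(2) it simply notes that every vertex and edge of an obstruction must lie on a cycle, and for (3) it observes that contracting toward a non-adjacent pair only shortens cycles and that any cycle through $v$ also passes through $u$); both bookkeeping devices make the same lifts go through.

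Two small remarks. In item (3) you claim non-adjacency of $u,w$ "ensures that the contraction is a simple graph" --- with the paper's definition contraction always yields a simple graph, so simplicity is not at issue; the real role of non-adjacency (which your split argument does use implicitly) is that it guarantees the split of $z$ adds a genuinely new edge $uv$ without merging two cycles of $G'\setminus S'$, i.e.\ that a triangle $uvw$ is not collapsed. Also, your lift for item (1) silently uses that $v$'s at most one neighbor either lies in $S$ or receives $v$ as a pendant; this is fine, but worth stating since it is the only place the degree bound is used.
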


\begin{proof}
	It is clear that every vertex and every edge of $G$ participates in a cycle.
	Thus, we get (1) and (2). \smallskip
	Regarding (3), suppose, to the contrary, that there exists a  vertex  $v  \in V(G)$ of degree 2 whose neighbors are no adjacent,
	and let $e \in E(G)$ be an edge incident to $v,$ i.e. $e= uv$ for some $u\in V(G).$
	As $G\in{\bf obs}({\cal A}_{k}({\cal S}))$ we have that $G' := G / e \in {\cal A}_{k}({\cal S}).$ 
	Let $S$ be a $k$-apex sub-unicyclic set of $G'$ and $v_{e}$ the vertex formed by contracting $e.$
	Observe that, every cycle in $G$ that contains $v$ also contains $u$
	and so if $v_{e}\in S$ then $(S\setminus \{v_{e}\})\cup \{u\}$ is a $k$-apex sub-unicyclic set of $G,$ a contradiction.
	Therefore, $v_{e}\notin S$ and so $S\subseteq V(G).$
	Since the neighbors of $v$ are not adjacent, the contraction of $e$ can only shorten cycles and not destroy them.
	Hence, $S$ is a $k$-apex sub-unicyclic set of $G,$ a contradiction.
\end{proof}

 \subsection{The disconnected case}

  We set ${\cal O}^{0}=\{{{O}_{1}^{0}},\ldots,{{O}_{6}^{0}}\}$. 
  We begin with an easy observation:
  
  \begin{observation}\label{pensions}
  Let $G$ be a connected graph such that ${\bf obs}({\cal S})\leq G$. Then, ${\bf obs}({\cal S})\setminus \{2K_{3}\}\leq G$.
  \end{observation}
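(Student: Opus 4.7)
The plan is to unpack the definition ${\bf obs}({\cal S})=\{2K_{3},K_{4}^{-},Z\}$ and reduce to a single nontrivial case. The desired conclusion ${\bf obs}({\cal S})\setminus\{2K_{3}\}\leq G$ just says that $K_{4}^{-}\leq G$ or $Z\leq G$. If the hypothesis ${\bf obs}({\cal S})\leq G$ is already witnessed by $K_{4}^{-}$ or $Z$, there is nothing to do; so I may assume $2K_{3}\leq G$ and aim to upgrade this minor model to one of $Z$, using only the connectivity of $G$.

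To do so, I would start from a minor model of $2K_{3}$: six pairwise disjoint connected vertex sets $B_{1},\ldots,B_{6}\subseteq V(G)$, where $\{B_{1},B_{2},B_{3}\}$ and $\{B_{4},B_{5},B_{6}\}$ each have all three pairs joined by an edge of $G$. Because $G$ is connected, there is some path in $G$ from a vertex of $B_{1}\cup B_{2}\cup B_{3}$ to a vertex of $B_{4}\cup B_{5}\cup B_{6}$; I would pick such a path $P$ of minimum length. Minimality forces all internal vertices of $P$ to lie outside $B_{1}\cup\cdots\cup B_{6}$, for otherwise $P$ could be shortened. After relabelling I may assume the endpoints of $P$ lie in $B_{1}$ and $B_{4}$.

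The final step is to build the $Z$-model: set $B_{1}^{*}=B_{1}\cup V(P)\cup B_{4}$, which is a connected subset of $V(G)$ disjoint from $B_{2},B_{3},B_{5},B_{6}$. Then $B_{1}^{*}$ is adjacent in $G$ to each of $B_{2},B_{3}$ (inherited from $B_{1}$) and to each of $B_{5},B_{6}$ (inherited from $B_{4}$), while the edges $B_{2}B_{3}$ and $B_{5}B_{6}$ are still present. Thus $(B_{1}^{*},B_{2},B_{3},B_{5},B_{6})$ is a minor model of the butterfly $Z$ in $G$, which is what we wanted.

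The argument has essentially one moving part; the only point that requires a moment's care is the choice of the minimal path $P$, which is what guarantees that the merged branch set $B_{1}^{*}$ stays disjoint from the remaining four. Beyond that, there is no real obstacle — connectedness automatically glues the two triangles into a butterfly, and the $K_{4}^{-}$ alternative in ${\bf obs}({\cal S})\setminus\{2K_{3}\}$ is never actually needed.
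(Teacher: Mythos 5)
Your argument is correct, and it supplies a proof for a statement the paper only asserts (it is stated as an ``easy observation'' with no proof given). The reduction to the case $2K_3\leq G$ is the right move, the choice of a shortest connecting path $P$ correctly guarantees that the merged branch set $B_1\cup V(P)\cup B_4$ stays disjoint from $B_2,B_3,B_5,B_6$, and the resulting five branch sets do realize a minor model of the butterfly $Z$ (with $B_1^{*}$ as the central vertex). One micro-point worth stating explicitly: since the six branch sets are pairwise disjoint, the two triangle models have no common vertex, so $P$ has length at least one and the construction is never degenerate.
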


 \begin{lemma}
  \label{presents}
  If $G\in {\bf obs}({\cal A}_{1}({\cal S}))$ and $G$ is not connected, then $G\in {\cal O}^{0}$.
 \end{lemma}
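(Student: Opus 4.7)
The plan is a case analysis on the number $r$ of connected components of $G$, combined with the cyclomatic structure of each component, in which minor-minimality is used at every stage. Let ${\cal F}$ denote the class of forests. By \autoref{segments}, $G$ has minimum degree at least two, so every connected component of $G$ contains a cycle.

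To bound $r \leq 3$: if $r \geq 4$, delete the vertex set of some component to obtain a proper minor $G'$ of $G$ with $r-1 \geq 3$ cyclic components. Any single vertex deletion from $G'$ touches only one component, leaving at least two cyclic components intact, so $G' \notin {\cal A}_{1}({\cal S})$, contradicting the minor-minimality of $G$. For $r = 3$, the same style of argument using edge deletions and contractions inside a component forces each $C_i$ to be minor-minimally cyclic: $C_i$ must be unicyclic (else an edge deletion inside $C_i$ gives a proper minor with three cyclic components) and in fact equal to $K_3$ (else an edge contraction gives a smaller cycle). Hence $G = 3K_3 = O_1^0$.

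For $r = 2$, write $G = C_1 \cup C_2$. Both components cannot be unicyclic, since then any vertex on either cycle is a sub-unicyclic apex of $G$. If both $C_1$ and $C_2$ have cyclomatic number at least two, then for every vertex $v$ the untouched component still has at least two cycles, so $G \notin {\cal A}_{1}({\cal S})$; minor-minimality then forces each $C_i$ to be a connected minor-obstruction for ${\cal S}$. Since ${\bf obs}({\cal S}) = \{2K_3, K_4^{-}, Z\}$ and the $C_i$ are connected, $C_i \in \{K_4^{-}, Z\}$, which yields the three obstructions $O_2^0 = 2K_4^{-}$, $O_3^0 = 2Z$, and $O_4^0 = K_4^{-} \cup Z$.

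The remaining and most delicate subcase is when exactly one component, say $C_2$, is unicyclic. Since $C_2$ has no pendants by \autoref{segments}, it is a cycle $C_k$. The condition $G \notin {\cal A}_{1}({\cal S})$ is then equivalent to $C_1 \notin {\cal A}_{1}({\cal F})$, because any putative apex inside $C_1$ must kill all cycles of $C_1$ while the single cycle of $C_2$ survives, and any apex inside $C_k$ leaves the $\geq 2$ cycles of $C_1$ untouched. Contracting an edge of $C_k$ for $k \geq 4$ gives the proper minor $C_1 \cup C_{k-1}$ with the same obstruction property, contradicting minor-minimality; hence $C_2 = K_3$. Replaying the reasoning with proper minors obtained by replacing $C_1$ with a proper minor $C_1'$, minor-minimality forces every such $C_1'$ to lie in ${\cal A}_{1}({\cal F})$; indeed, the only other way for $G' = C_1' \cup K_3$ to belong to ${\cal A}_{1}({\cal S})$ would be $C_1' \in {\cal S}$, but any sub-unicyclic graph automatically lies in ${\cal A}_{1}({\cal F})$ (remove any vertex of its unique cycle). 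Hence $C_1$ must be a connected element of ${\bf obs}({\cal A}_{1}({\cal F}))$. The main obstacle is then to enumerate these connected obstructions: they turn out to be precisely $K_4$ and the $6$-vertex graph $H$ appearing as the upper component of $O_6^0$, which can be established either by invoking the characterization of~\cite{DinneenCF01forb} or by a short bounded-size case analysis supported by \autoref{segments}. The two resulting graphs are $O_5^0 = K_4 \cup K_3$ and $O_6^0 = H \cup K_3$, completing the list ${\cal O}^0$.
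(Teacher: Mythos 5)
Your proof is correct in outline and reaches the same six graphs, but it takes a genuinely different route in the hardest subcase. The component counting and the case where neither component is sub-unicyclic run parallel to the paper, which routes the latter through \autoref{pensions} rather than by arguing that each $C_i$ must itself lie in ${\bf obs}({\cal S})$, but these are interchangeable. The real divergence is the case where exactly one component is unicyclic, hence $K_3$. The paper proceeds structurally: it proves $G_1$ is biconnected but not triconnected, fixes a $2$-separator $\{x,y\}$, shows every cycle of $G_1$ meets $\{x,y\}$, and case-analyses the intersection of two witnessing cycles to exhibit $O_5^0$ or $O_6^0$ as proper minors. You instead observe that, once $C_2 = K_3$, membership of $G$ in ${\cal A}_1({\cal S})$ is equivalent to $C_1 \in {\cal A}_1({\cal F})$, so minor-minimality forces $C_1$ to be a connected element of ${\bf obs}({\cal A}_1({\cal F}))$ --- a modular reduction that mirrors the technique the paper itself uses in \autoref{operetta} to import obstructions from ${\bf obs}({\cal A}_1({\cal P}))$. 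The trade-off is self-containment: you lean on the nontrivial enumeration of connected obstructions for $1$-apex forests, citing \cite{DinneenCF01forb} or deferring to an unspecified case analysis, whereas the paper's $2$-separator argument derives $K_4$ and the $6$-vertex component of $O_6^0$ directly without that external input. One detail worth making explicit: minor-minimality (via the proper minor $C_1 \cup K_2$, obtained by deleting a vertex of the $K_3$) also forces $C_1 \in {\cal A}_1({\cal S})$, so the set you actually need is the connected graphs in ${\bf obs}({\cal A}_1({\cal F})) \cap {\cal A}_1({\cal S})$; this is harmless here because both candidates pass, but it is the precise condition your reduction produces.
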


 \begin{proof}
Notice first that ${O}^{0}\subseteq {\bf obs}({\cal A}_{1}({\cal S}))$.
  Suppose, to the contrary, that there exists some disconnected 
  graph $G\in {\bf obs}({\cal A}_{1}({\cal S}))\setminus {\cal O}^{0}$.
  Note that due to  \autoref{segments} each connected component of $G$ contains
  at least one cycle and so if $G$ has more than two connected components,
  it follows that ${{O}_{1}^{0}} \leq G,$ a contradiction.
  Therefore, $G$ has exactly two connected components, namely $G_{1}, G_{2}$.

  \medskip
  
  \noindent{\em Claim 1:} One of $G_{1}, G_{2}$ is isomorphic to $K_{3}.$
  
  \noindent{\em Proof of Claim 1:}
  Suppose, towards a contradiction, that $G_{1}, G_{2}\not\in {\cal S}$.
  Then, since both $G_{1},G_{2}$ are connected, \autoref{pensions} implies that
  ${\bf obs}({\cal S})\setminus\{2K_{3}\}\leq G_{1},G_{2}$ and therefore
  $\{{{O}_{2}^{0}}, {{O}_{3}^{0}}, {{O}_{4}^{0}}\}\leq G$,
   a contradiction. Hence, one of $G_{1},G_{2}$ is sub-unicyclic
  and therefore, by \autoref{segments}, isomorphic to $K_{3}.$ Claim 1 follows.
  
  \medskip
  
  By Claim 1, we can assume, without loss of generality, that $G_{2}\cong K_{3}.$

  \medskip
  
  \noindent{\em Claim 2:} $G_{1}$ biconnected but not triconnected.
  
  \noindent{\em Proof of Claim 2:}
  If $G_{1}$ is triconnected then $K_{4}\leq G_{1},$ and therefore
  ${{O}_{5}^{0}} \leq G,$ a contradiction.
  Now, suppose that there exists a cut-vertex $x$ of $G_{1}$.
  Note that, by \autoref{segments}, it follows that every $H\in {\cal C}(G_{1},x)$
  contains at least one cycle. If $x$ belongs to every cycle of $G_{1},$
  then $x$ is an ${\cal S}$-apex vertex of $G,$ which is a contradiction.
  Therefore, there exists an $H\in {\cal C}(G_{1},x)$ such that $H\setminus x$
  contains a cycle $C$ which together with a cycle in some
  $H' \in {\cal C}(G_{1},x)\setminus \{H\}$ and $G_{2}$ form
  ${{O}_{1}^{0}}$ as a minor of $G,$ a contradiction. Therefore, $G$ is biconnected. Claim 2 follows.
  
  \medskip
  
  Claim 2 implies that there exists a $2$-separator $S=\{x,y\}$ of $G_{1}$ such that
  every $H\in {\cal C}(G_{1},S)$ is a biconnected graph.
  
  \smallskip
  
  \noindent{\em Observation:} Every cycle in $G_{1}$ contains either $x$ or $y$.
  Indeed, suppose to the contrary that there exists a cycle
  $C\subseteq G_{1}$ disjoint to both $x,y$ and consider an
  $H\in{\cal C}(G_{1},S)$ such that $C\not\subseteq H$. Then,
  due to \autoref{segments}, $G[V(H)]$ contains a cycle
  and together with $C$ and $G_{2}$ form ${{O}_{1}^{0}}$
  as a minor of $G$, a contradiction.
  
  \smallskip

  \begin{figure}[!h]
  	\centering
  	\vspace{-1cm}
  	\begin{tikzpicture}[myNode/.style = black node, scale=0.7]
  	\begin{scope}
  	
  	\node[myNode, label=above:$x$] (A) at (0,2) {};
  	\node[myNode, label=below:$y$] (B) at (0,0) {};
  	
  	\begin{scope}[on background layer]
  	\filldraw[fill=green!70!blue,  opacity=0.7, draw=white]
  	(B.center) to [bend right =110, min distance =5cm] (A.center) to [bend left=20] (B.center);
  	\end{scope}  
  	
  	\begin{scope}[on background layer]
  	\filldraw[fill=mustard,  opacity=0.7, draw=white]
  	(B.center) to [bend left=110, min distance =5cm] (A.center) to [bend right=20] (B.center);
  	
  	\draw[-, name path=path1] (A.center) .. controls  ++(180:3) and ++(240:3) .. (A.center);
  	\draw[-, name path=path2] (B.center) .. controls  ++(180:3) and ++(120:3) .. (B.center);
  	\draw [name intersections={of= path1 and path2}]
  	\foreach \s in {1,2}{
  		(intersection-\s) node[myNode] {}
  	};
  	
  	\node[label=above:$C_{1}$] () at (intersection-2) {};
  	\node[label=below:$C_{2}$] () at (intersection-2) {};
  	
  	\end{scope}
  	
  	\node[label= right:$H'$] () at (2,1) {};
  	\node[label= left:$H$] () at (-2,1) {};
  	
  	
  	\end{scope}
  	
  	\begin{scope}[xshift=8cm]
  	
  	\node[myNode, label=above:$x$] (A) at (0,2) {};
  	\node[myNode, label=below:$y$] (B) at (0,0) {};
  	
  	\begin{scope}[on background layer]
  	\filldraw[fill=green!70!blue,  opacity=0.7, draw=white]
  	(B.center) to [bend right =110, min distance =5cm] (A.center) to [bend left=20] (B.center);
  	\end{scope}  
  	
  	\begin{scope}[on background layer]
  	\filldraw[fill=mustard,  opacity=0.7, draw=white]
  	(B.center) to [bend left=110, min distance =5cm] (A.center) to [bend right=20] (B.center);
  	
  	\node[myNode] (u) at (-1.5,1) {};
  	
  	\draw[-] (A.center) to [out=180, in=180, min distance=1.3cm] (u.center) to [out=0,in=240] (A.center);
  	\draw[-] (B.center) to [out=180, in=180, min distance=1.3cm] (u.center) to [out=0,in=120] (B.center);
  	
  	\node[label=above right:$C_{1}$] () at (u.center) {};
  	\node[label=below right:$C_{2}$] () at (u) {};
  	
  	\end{scope}
  	
  	\node[label= right:$H'$] () at (2,1) {};
  	\node[label= left:$H$] () at (-2,1) {};


  	\end{scope}
  	\end{tikzpicture}
  	\vspace{-.7cm}
  	\caption{The cycles $C_{1}, C_{2}$ in the last part of the proof of \autoref{presents}}\label{teaching}
  \end{figure}
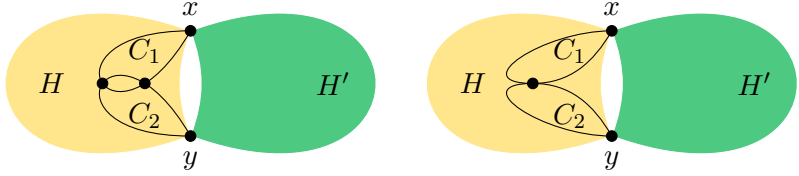

  Since $x,y$ are not ${\cal S}$-apex vertices of $G$, then, apart from $G_{2}$, there exist
  two cycles $C_{1},C_{2}$ in $G_{1}$ such that $y\notin V(C_{1})$ and
  $x\notin V(C_{2})$. The above Observation implies that $x\in V(C_{1})$
  and $y\in V(C_{2})$. Due to ${{O}_{1}^{0}}$-freeness of $G$,
  we have that $V(C_{1})\cap V(C_{2})\neq \emptyset$ and therefore there exists an
  $H\in {\cal C}(G_{1},S)$ such that $C_{1}\cup C_{2}\subseteq H$.
  Consider, now, an $H'\in {\cal C}(G_{1},S)$ different from $H$ and observe
  that by \autoref{segments}, $G[V(H')]$ contains a cycle.
  Then, if $C_{1},C_{2}$ share more than one vertex,
  ${{O}_{5}^{0}}\leq G$, while if they share only one vertex,
  ${{O}_{6}^{0}}\leq G$, a contradiction in both cases (see \autoref{teaching}). Lemma follows.
 \end{proof}

  \subsection{The connected cases}
  
 \begin{lemma}
  \label{immature}
  If $G$ is a connected graph in ${\bf obs}({\cal A}_{1}({\cal S}))$, with  at least three cut-vertices,  then $G\cong {{O}_{1}^{1}}$.
 \end{lemma}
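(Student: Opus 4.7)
The plan is to analyse the block-cut tree $T_B$ of $G$ and force $G\cong O_1^1$ via minor-minimality. By \autoref{segments}, every block of $G$ is $2$-connected (so contains a cycle) and every cut-vertex lies in at least two blocks; in particular cut-vertices have degree $\geq 2$ in $T_B$, so the leaves of $T_B$ are all blocks. A double-counting of incidences in the bipartite tree $T_B$ shows that the number $b$ of blocks satisfies $b \geq c+1 \geq 4$, where $c\geq 3$ is the number of cut-vertices of $G$.

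The argument then splits on whether $T_B$ is a path. If $T_B$ is a path $B'_1 - c_1 - B'_2 - c_2 - B'_3 - c_3 - B'_4 - \cdots$, the $2$-connectedness of each $B'_i$ lets us pick cycles $C_i \subseteq B'_i$, for $i\in\{1,2,3,4\}$, each passing through the cut-vertices $B'_i$ shares with its neighbours in $T_B$, in such a way that $C_i \cap C_{i+1}=\{c_i\}$ and $C_i\cap C_j=\emptyset$ whenever $|i-j|\geq 2$. Contracting each $C_i$ down to a triangle (while keeping the designated cut-vertices fixed) produces $O_1^1$ as a minor of $G$. Unless $G$ already has exactly the structure of $O_1^1$ (four triangular blocks, three cut-vertices, path-shape, and nothing else), this minor is proper, contradicting the minor-minimality of $G$ since $O_1^1\notin\mathcal{A}_1(\mathcal{S})$; hence $G\cong O_1^1$.

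If instead $T_B$ has a node of degree $\geq 3$, the plan is to exhibit a proper minor of $G$ already lying in ${\bf obs}(\mathcal{A}_1(\mathcal{S}))$. In the generic sub-case, three leaf blocks of $T_B$ are attached at pairwise distinct cut-vertices; such leaf blocks are pairwise vertex-disjoint, and contracting one cycle per block yields $3K_3=O_1^0$ as a proper minor (proper because $G$ is connected while $O_1^0$ is not). When instead the leaves of $T_B$ cluster at only two cut-vertices while $T_B$ is still branched, we exploit the third (non-leaf) cut-vertex: either a pendant block at it can be combined with two of the leaf blocks to give three pairwise vertex-disjoint cycles, or else a single edge-contraction in an internal block identifies two cut-vertices of $G$ and unveils $2Z=O_3^0$ as a proper minor. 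In every configuration we therefore reach a proper minor lying in ${\bf obs}(\mathcal{A}_1(\mathcal{S}))$, contradicting minor-minimality.

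The main difficulty is precisely this case analysis in the non-path scenario, where one must verify uniformly that every branched shape of $T_B$ admits at least one of the small disconnected obstructions ($O_1^0$ or $O_3^0$) as a proper minor; once the structural dichotomy is handled, the path case and the closing contradiction are routine.
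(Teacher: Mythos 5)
Your overall skeleton overlaps heavily with the paper's: \autoref{segments} gives a cycle in every block, a chain of four consecutive blocks yields ${O}_{1}^{1}$ as a minor, and the obstruction-versus-obstruction argument forces $G\cong {O}_{1}^{1}$; your path case is essentially the paper's construction, and your first branched sub-case (three leaf blocks at three distinct cut-vertices giving $3K_{3}={O}_{1}^{0}$) is sound. The genuine gap is the second branched sub-case, which you yourself flag as the main difficulty and then do not actually close. When the leaf blocks attach at only two cut-vertices $c_{1},c_{2}$ (attachment at a single cut-vertex is impossible, since it would force a star with only one cut-vertex), the block-cut-vertex tree is forced to consist of the $c_{1}$--$c_{2}$ path, containing at least two blocks and hence an internal cut-vertex, together with leaf blocks at $c_{1}$ and $c_{2}$, at least two of them at one end. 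In that configuration neither of your stated alternatives applies: the third (internal) cut-vertex has degree two in the tree, so there is no pendant block at it; and a single edge-contraction does not in general identify two cut-vertices (they need not be adjacent), nor do you give any argument that such a contraction produces $2Z={O}_{3}^{0}$ or that the result is a proper minor. So the step your proof rests on in the branched case would fail as written.

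The case can be closed, and more simply than your sketch suggests: take two cycles through $c_{1}$ inside the two leaf blocks at $c_{1}$ (they meet only at $c_{1}$), and two cycles through $c_{2}$ inside the leaf block at $c_{2}$ and the path-block adjacent to $c_{2}$; since the $c_{1}$--$c_{2}$ path contains at least two blocks, these two butterflies are vertex-disjoint, so $2Z={O}_{3}^{0}$ is a minor of $G$, and it is proper because $G$ is connected --- a contradiction. (Alternatively, the tree still contains a chain of four blocks, so ${O}_{1}^{1}$ is a minor, and it is proper because the extra leaf block at $c_{1}$ shows $G\not\cong {O}_{1}^{1}$.) Note also that the paper's organization avoids your path/branched dichotomy entirely: it only excludes a block containing three cut-vertices (which would give ${O}_{1}^{0}$ as a proper minor), after which three cut-vertices automatically lie on a common path of the block-cut-vertex tree, and the four-block chain, hence ${O}_{1}^{1}$, is extracted directly; branching at cut-vertices never requires a separate obstruction such as ${O}_{3}^{0}$.
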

 
 \begin{proof}
 Consider a connected graph $G\in{\bf obs}({\cal A}_{1}({\cal S}))$ with  at least three cut-vertices.
 We first exclude the case where there is a block $B$ of $G$ containing three cut-vertices $x,y,z.$ Indeed,  due to \autoref{segments},
  each block of $G$ contains a cycle and this holds for $B$ and the blocks of $G$ that share a cut-vertex with $B.$
  This implies the existence of ${{O}_{1}^{0}}$ as a proper minor of $G,$ a contradiction as ${{O}_{1}^{0}}\in  {\bf obs}({\cal A}_{1}({\cal S}))$.
   \smallskip 
  
We just proved that $G$ contains 4 blocks $B_{1},$ $B_{2},$ $B_{3},$ and $B_{4}$ such that
  $V(B_{1})\cap V(B_{2})=\{x\}$, $V(B_{2})\cap V(B_{3})=\{y\},$ $V(B_{3})\cap V(B_{4})=\{z\}$  are singletons each consisting of a cut-vertex. 
  In this case, again by \autoref{segments}, each block in $\{B_{1},B_{2},B_{3},B_{4}\}$ contains a cycle, which implies that ${{O}_{1}^{1}}\leq G$. As ${{O}_{1}^{1}}\in{{\bf obs}}({\cal A}_{1}({\cal S}))$, it follows that $G\cong {{O}_{1}^{1}}$.
 \end{proof}

 \begin{lemma}
  \label{daylight}
    If $G$ is a connected graph in ${\bf obs}({\cal A}_{1}({\cal S}))$ with exactly two cut-vertices,  then $G
    \in \{ {{O}_{2}^{1}},{{O}_{3}^{1}}\}$.
 \end{lemma}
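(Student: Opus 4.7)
The plan is to first establish the block-cut tree structure of $G$, then show that $G$ has exactly three blocks, and finally classify the possible block sizes to conclude $G\in\{{O}_{2}^{1},{O}_{3}^{1}\}$.

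Let $c_1,c_2$ be the two cut-vertices of $G$. The block-cut tree of $G$ is a tree alternating block-nodes and cut-vertex-nodes; since $c_1,c_2$ are the only cut-vertices, they must share a unique common block $B$ (otherwise the path between them in the tree would traverse a third cut-vertex). Each $c_i$ additionally lies in some pendant blocks $B_{i,1},\ldots,B_{i,p_i}$ whose only cut-vertex of $G$ is $c_i$, with $p_i\geq 1$. By \autoref{segments}, every block contains a cycle, every vertex has degree $\geq 2$, and every degree-$2$ vertex has adjacent neighbours.

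I would next show $p_1=p_2=1$. If $p_1,p_2\geq 2$, contracting $B$ onto the single edge $c_1c_2$ and then deleting that edge yields a disconnected proper minor of $G$ whose two components are bouquets of $\geq 2$ cycles at $c_i$; each bouquet contains a butterfly $Z$ as a minor, so ${O}_{3}^{0}=2Z$ is a proper minor of $G$, contradicting minor-minimality. In the mixed case (WLOG $p_1\geq 2$, $p_2=1$), contracting each extra pendant $B_{1,j}$ ($j\geq 2$) onto $c_1$ produces a proper three-block-chain minor $G^{\star}=B_{1,1}-B-B_{2,1}$. By the next step, either $G^{\star}\in\{{O}_{2}^{1},{O}_{3}^{1}\}$ (contradicting minor-minimality of $G$) or $G^{\star}\in{\cal A}_{1}({\cal S})$; in the latter case, after further reducing each pendant to $K_3$ by proper minors (which are forced to lie in ${\cal A}_{1}({\cal S})$ by minor-minimality), one notes that the apex of the reduced chain must be $c_1$ (no other vertex can kill the extra triangles at $c_1$), and this apex also serves for $G$ itself (removing $c_1$ reduces each $K_3$-pendant to an isolated edge), yielding $G\in{\cal A}_{1}({\cal S})$---a contradiction.

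Once $G=B_{1,1}-B-B_{2,1}$, I would classify the block sizes. Each block is 2-connected, and by the degree-$2$-adjacency condition of \autoref{segments} cannot be a cycle $C_n$ with $n\geq 4$, so each block is $K_3$ or contains $K_4^-$ as a minor. Three short arguments pin down the sizes: if all three blocks were $K_3$, then removing either cut-vertex destroys two adjacent triangles and leaves only the remaining triangle (plus an isolated edge), so $G\in{\cal A}_{1}({\cal S})$---contradiction; any block strictly larger than $K_4^-$ admits a proper-minor reduction to $K_4^-$, contradicting minor-minimality; and having two $K_4^-$-blocks permits a similar reduction to ${O}_{2}^{1}$ or ${O}_{3}^{1}$ as a proper minor. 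Hence exactly one block is $K_4^-$ and the other two are $K_3$. Finally, the missing edge of $K_4^-$ is forced by $G\notin{\cal A}_{1}({\cal S})$: if some cut-vertex of $G$ sat at a degree-$3$ vertex of that $K_4^-$, removing it would collapse the $K_4^-$ into a path and $G$ would have only one cycle left---contradiction. So the $K_4^-$ is either the middle block with missing edge $c_1c_2$ (giving $G={O}_{2}^{1}$) or a pendant block with its cut-vertex at a degree-$2$ position (giving $G={O}_{3}^{1}$).

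The principal obstacle will be the mixed case of the second step, where we must carefully interleave several minor reductions (contracting extras and reducing pendant sizes) with apex-tracking in order to derive the contradiction cleanly.
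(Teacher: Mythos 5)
Your route (first force a three-block chain, then classify block sizes) is genuinely different from the paper's, which instead shows that every cycle of $G$ passes through one of the two cut-vertices, then that the pendant parts $H_1,H_2$ are single triangles by exhibiting graphs of ${\bf obs}({\cal S})\setminus\{2K_3\}$ on both sides (so that one of $O_2^0,O_3^0,O_4^0$ appears as a proper minor), and finally analyses how two cycles inside the middle block intersect. However, your argument has a genuine gap exactly where you predicted trouble: the mixed case $p_1\geq 2$, $p_2=1$. The contradiction there rests on transferring the apex $c_1$ of the triangle-shrunk minor back to $G$, and this transfer is invalid. Shrinking all pendants to triangles only tells you that $B\setminus c_1$ is a forest; in $G$ itself a pendant block at $c_1$ that is larger than a triangle may keep a cycle after $c_1$ is deleted (e.g.\ a $K_4^-$ attached at one of its degree-two vertices leaves a triangle), and $B_{2,1}$ may contain more than one cycle, so $G\setminus c_1$ need not be sub-unicyclic. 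Worse, when every pendant block of $G$ already is a triangle, the ``reduced'' graph equals $G$, so it is not a proper minor and minimality gives you no apex at all; the subcase of several triangles hanging at $c_1$ is then left completely unhandled. (Your first subcase, $p_1,p_2\geq 2$ via $O_3^0=2Z$, is fine.)

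A secondary but real flaw is in the size classification. ``Any block strictly larger than $K_4^-$ admits a proper-minor reduction to $K_4^-$, contradicting minor-minimality'' is not a valid deduction: every graph admits proper minors, and minimality is contradicted only if you exhibit a proper minor that is \emph{not} $1$-apex sub-unicyclic, which you do not verify. Likewise, with two blocks containing $K_4^-$ the reduction need not land on $O_2^1$ or $O_3^1$: if both end blocks contain $K_4^-$ the natural witness is $O_2^0=2K_4^-$ (the end blocks are disjoint), and for unfavourable attachment positions a chain $K_3$--$K_4^-$--$K_3$ actually lies in ${\cal A}_1({\cal S})$, so the choice of reduction and of the obstruction witnessed must be argued position by position. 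These two steps would have to be reworked, e.g.\ along the lines of the paper's Claims 1 and 2 with explicit witnesses from ${\cal L}_0$, before your final ``missing edge'' analysis, which by itself is correct.
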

 
 \begin{proof} 	
Observe first that $\{{{O}_{2}^{1}},{{O}_{3}^{1}}\}\subseteq {\bf obs}({\cal A}_{1}({\cal S}))$.
 Suppose, to the contrary, that there is a graph $G\in {\bf obs}({\cal A}_{1}({\cal S}))\setminus\{ {{O}_{2}^{1}},{{O}_{3}^{1}}\}$ that has  exactly two cut-vertices, namely $u_{1}$ and $u_{2}$.
  Let $B$ be the (unique) block containing the two cut-vertices $u_{1},u_{2}$ and let 
  $${H}_{1} = \cupall \{ H \in {\cal C}(G,u_{1}) : u_{2} \notin V(H) \}\mbox{~~and~~}{H}_{2} = \cupall \{ H \in {\cal C}(G,u_{2}) : u_{1} \notin V(H) \}.$$ 
 Keep in mind that $G$ cannot contain any graph in ${\cal O}^{0}$  as a minor because, due to the connectivity of $G$,
it would contain it as a proper minor, a contradiction to the fact that ${\cal O}^{0}\subseteq {\bf obs}({\cal A}_{1}({\cal S}))$.

  We  prove a series of claims:
  
  \medskip
  \noindent{\em Claim 1:} Every cycle in $G$ contains either $u_{1}$ or $u_{2}.$
  
  \noindent{\em Proof of Claim 1:}
  Suppose, to the contrary, that there exists a cycle $C$ not containing any of the cut-vertices. We distinguish two cases:
  
  \smallskip
  
  \noindent {\em Case 1:} $C$ is in $B.$
  Note that, due to  \autoref{segments}, each block contains a cycle.
  Therefore the cycle $C$ along with two more cycles, one from  $H_{1}$ and one from  $H_{2},$ form ${{O}_{1}^{0}}$ as a proper minor of $G,$ a contradiction.
  
  \smallskip
  
  \noindent {\em Case 2:} $C$ is either in $H_{1}$ or $H_{2}.$
  Suppose, without loss of generality, that $C$ is in some block of $H_{1}.$ Then, due to Menger's theorem, there exist two paths from $C$ to $u_{1}$ that intersect only in $u_{1}.$
  Since each block of $G$ contains at least one cycle, we have that $C$ together with the aforementioned paths,
  the block $B$ and any block in $H_{2},$ form ${{O}_{3}^{1}}$ as minor of $G,$ a contradiction (See \autoref{dispense}). Claim 1 follows.

  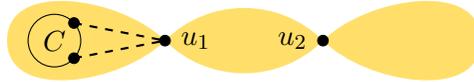
\begin{figure}[!h]
   \centering
   \begin{tikzpicture}[myNode/.style = black node, scale=.7]
   
   \node[myNode, label=right:$u_1$] (u1) at (0,1) {};
   \coordinate (u1t) at (-3,1) {};
   \node[myNode, label=left:$u_2$]  (u2) at (3,1) {};
   \coordinate (u2t) at (6,1) {};

   \begin{scope}[on background layer]
   \fill[mustard!90] (u1.center) to [out=135, in=90] (u1t.center) to [out=270, in=225] (u1.center);
   \path[-, name path= path1] (u1.center).. controls ++(150:3) and ++(210:3) .. (u1.center);
   \node (C) at ($(u1t)!0.3!(u1)$) {$C$};
   \draw[-, name path= path2] (C) circle (0.5cm);
   \draw [name intersections={of= path1 and path2}][dashed, thick]
   \foreach \s in {1,2}{
    (intersection-\s) -- (u1.center)
   };
   \draw [name intersections={of= path1 and path2}]
   \foreach \s in {1,2}{
    (intersection-\s) node[myNode] {}
   };
   \end{scope}
   
   \begin{scope}[on background layer]
   \fill[mustard!90] (u1.center) to [bend left = 45] (u2.center) to [bend left=45] (u1.center);
   \end{scope}
   
   \begin{scope}[on background layer]
   \fill[mustard!90] (u2.center) to [out=45, in=90] (u2t.center) to [out=270, in=315] (u2.center);
   \end{scope}

   \end{tikzpicture}
   \caption{The cycle $C$ in Case 2 of the proof of Claim 1.}\label{dispense}
  \end{figure}

  \noindent{\em Claim 2:} Both $H_{1}$ and $H_{2}$ are isomorphic to $K_{3}.$
  
  \noindent{\em Proof of Claim 2:}
  Suppose, towards a contradiction, that one of $H_{1}, H_{2}$, say $H_{1}$,
  is not sub-unicyclic (we will use \autoref{segments}). Since $H_{1}$ is connected, then, by
  \autoref{pensions}, we have that ${\bf obs}({\cal S})\setminus\{2K_{3}\} \leq H_{1}.$
  Also, since $u_{1}$ is not an {${\cal S}$-apex} vertex of $G$ and since,
  due to Claim 1, all cycles of $H_{1}$ contain $u_{1}$,
  then $G\setminus V(H_{1})\not\in {\cal S}$. Now, since
  $G\setminus V(H_{1})$ is connected then \autoref{pensions} implies
  that ${\bf obs}({\cal S})\setminus\{2K_{3}\} \leq G\setminus V(H_{1}).$
  Hence, $\{{{O}_{2}^{0}}, {{O}_{3}^{0}},
  {{O}_{4}^{0}}\}\leq G$, a contradiction. Therefore 
  $H_{1}, H_{2}\in {\cal S}$ and, by \autoref{segments}, Claim 2 follows.
  
  \medskip

  Since $u_{1}$ is not a ${\cal S}$-apex of $G$, then Claim 2 implies that (apart from $H_{2}$) there exists a cycle $C_{2}$ in $B\setminus u_{1}$,
  which by Claim 1, contains $u_{2}$. The same holds for $u_{2}$, i.e.
  there exists a cycle $C_{1}$ in $B\setminus u_{2}$ that contains $u_{1}$.
   Then ${{O}_{3}^{0}}\leq G,$ if $C_{1}, C_{2}$ are disjoint,
   ${{O}_{1}^{1}}\leq G,$ if $C_{1}$ and $C_{2}$ share exactly one vertex,
  and ${{O}_{2}^{1}}\leq G,$ if $C_{1}, C_{2}$ share at least $2$ vertices,
  a contradiction in all cases (see \autoref{connects}). Lemma follows.
  
  \begin{figure}[!h]
  	\centering
  	\vspace{-0.5cm}
  	\begin{tikzpicture}[myNode/.style = black node, scale=.7]
  	
  	\begin{scope}
  	\node[myNode, label=above:$u_1$] (u1) at (0,0) {};
  	\node[myNode, label=above:$u_2$]  (u2) at (4,0) {};
  	\draw[-] (u1) to (150:1) node[myNode] {} to (210:1) node[myNode] {} to (u1);
  	\begin{scope}[xscale=-1, xshift=-4cm]
  	\draw[-] (0,0) to (150:1) node[myNode] {} to (210:1) node[myNode] {} to (0,0);
  	\end{scope}
  	
  	\draw[-] (u1.center) .. controls ++(45:3) and ++(-45:3).. (u1.center);
  	\draw[-] (u2.center) .. controls ++(135:3) and ++(-135:3).. (u2.center);
  	
  	\begin{scope}[on background layer]
  	\fill[mustard!90] (u1.center) to [bend left = 60] (u2.center) to [bend left=60] (u1.center);
  	\end{scope}
  	
  	\end{scope}
  	
  	\begin{scope}[xshift=7cm]
  	\node[myNode, label=above:$u_1$] (u1) at (0,0) {};
  	\node[myNode, label=above:$u_2$]  (u2) at (4,0) {};
  	\draw[-] (u1) to (150:1) node[myNode] {} to (210:1) node[myNode] {} to (u1);
  	\begin{scope}[xscale=-1, xshift=-4cm]
  	\draw[-] (0,0) to (150:1) node[myNode] {} to (210:1) node[myNode] {} to (0,0);
  	\end{scope}
  	
  	\node[myNode] (M) at (2,0) {};
  	
  	\draw[-] (u1.center) to [out=20, in=90] (M.center) to [out=-90, in=-20] (u1.center);
  	\draw[-] (u2.center) to [out=160, in=90] (M.center) to [out=-90, in=-160] (u2.center);
  	
  	\begin{scope}[on background layer]
  	\fill[mustard!90] (u1.center) to [bend left = 60] (u2.center) to [bend left=60] (u1.center);
  	\end{scope}
  	
  	\end{scope}
  	
  	\begin{scope}[xshift=14cm]
  	\node[myNode, label=above:$u_1$] (u1) at (0,0) {};
  	\node[myNode, label=above:$u_2$]  (u2) at (4,0) {};
  	\draw[-] (u1) to (150:1) node[myNode] {} to (210:1) node[myNode] {} to (u1);
  	\begin{scope}[xscale=-1, xshift=-4cm]
  	\draw[-] (0,0) to (150:1) node[myNode] {} to (210:1) node[myNode] {} to (0,0);
  	\end{scope}
  	
  	\draw[-, name path=path1] (u1.center) .. controls ++(30:3.5) and ++(-30:3.5).. (u1.center);
  	\draw[-, name path=path2] (u2.center) .. controls ++(150:3.5) and ++(-150:3.5).. (u2.center);
  	
  	\draw [name intersections={of= path1 and path2}]
  	\foreach \s in {1,2}{
  		(intersection-\s) node[myNode] {}
  	};
  	
  	\begin{scope}[on background layer]
  	\fill[mustard!90] (u1.center) to [bend left = 60] (u2.center) to [bend left=60] (u1.center);
  	\end{scope}
  	
  	\end{scope}
  	\end{tikzpicture}
  	\caption{The ways the cycles $C_{1}, C_{2}$ may intersect in the last part of the proof of \autoref{daylight}.}\label{connects}
  \end{figure}
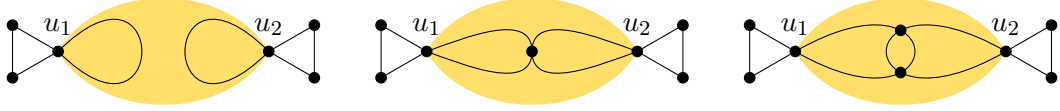
  
  \end{proof}

\paragraph{Nearly-biconnected graphs.}We say that a graph $G$ is {\em nearly-biconnected} if  it is either biconnected or it contains exactly one cut-vertex $x$ 
and ${\cal C}(G,x)=\{H,K_{3}\}$ where $H$ is a biconnected graph.

 \begin{lemma}\label{handling}
  Let $G\in {\bf obs}({\cal A}_{1}({\cal S}))$ be a connected graph that contains exactly one cut-vertex. Then either $G\cong{{O}_{4}^{1}}$  or  $G$ is nearly-biconnected.
 \end{lemma}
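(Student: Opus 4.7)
The plan is to analyse the block structure of $G$ around its unique cut-vertex $x$. Since $x$ is the only cut-vertex of the connected graph $G$, every block of $G$ contains $x$; let $B_{1},\ldots,B_{m}$ with $m\geq 2$ denote these blocks, each $2$-connected and containing a cycle by \autoref{segments}. My strategy is first to show that $m=2$ and then to classify the two-block case.

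To rule out $m\geq 3$: as $G\in{\bf obs}({\cal A}_{1}({\cal S}))$, the vertex $x$ is not an ${\cal S}$-apex of $G$, so $G\setminus x=\bigcup_{i\in[m]}(B_{i}\setminus x)$ carries at least two cycles. If two distinct $B_{i}\setminus x$'s contain cycles, say $C_{1}\subseteq B_{1}\setminus x$ and $C_{2}\subseteq B_{2}\setminus x$, then together with any cycle $\gamma_{3}\subseteq B_{3}$ we obtain three pairwise vertex-disjoint cycles (the blocks pairwise share only $x$, and $C_{1},C_{2}$ avoid $x$), giving $O_{1}^{0}=3K_{3}\leq G$ as a proper minor, a contradiction. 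Otherwise only $B_{1}\setminus x$ contains cycles, and hence at least two; by \autoref{pensions} applied to the connected graph $B_{1}\setminus x$, either $K_{4}^{-}\leq B_{1}\setminus x$ or $Z\leq B_{1}\setminus x$, and in both cases the minor avoids $x$. Meanwhile, for $j\in\{2,3\}$ the graph $B_{j}\setminus x$ is a tree, so every cycle of $B_{j}$ passes through $x$; contracting such cycles in $B_{2}$ and in $B_{3}$ produces two triangles meeting only at $x$, i.e.\ a butterfly $Z$ supported in $V(B_{2}\cup B_{3})$ and using $x$. Combining with the minor from $B_{1}\setminus x$ yields $O_{4}^{0}=K_{4}^{-}+Z$ or $O_{3}^{0}=2Z$ as a proper minor of $G$, again a contradiction.

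With $m=2$, write $G=B_{1}\cup B_{2}$ with $V(B_{1})\cap V(B_{2})=\{x\}$. If some $B_{j}\cong K_{3}$ then $G$ is nearly-biconnected and we are done, so assume $|V(B_{j})|\geq 4$ for $j=1,2$. The key structural lemma I would establish is that whenever $B_{j}\setminus x$ contains a cycle $C_{j}$, the $2$-connectedness of $B_{j}$ together with Menger's fan lemma produce two internally disjoint paths from $x$ to distinct vertices $u_{j},v_{j}\in V(C_{j})$; contracting these paths to the edges $xu_{j}$ and $xv_{j}$, one arc of $C_{j}$ to the edge $u_{j}v_{j}$, and the other arc to a two-edge path $u_{j}w_{j}v_{j}$, produces a $K_{4}^{-}$-minor of $B_{j}$ with branch vertices $\{x,u_{j},v_{j},w_{j}\}$ in which $x$ has degree $2$. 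Consequently, if both $B_{1}\setminus x$ and $B_{2}\setminus x$ contain cycles, the two resulting $K_{4}^{-}$-minors share only the branch-vertex $x$ and glue together into a minor of $G$ isomorphic to $O_{4}^{1}$; since $O_{4}^{1}\in{\bf obs}({\cal A}_{1}({\cal S}))$, minimality of $G$ forces $G\cong O_{4}^{1}$.

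The main obstacle is to rule out the asymmetric situation in which $B_{2}\setminus x$ is a tree; in that case $B_{1}\setminus x$ must carry $\geq 2$ cycles, otherwise $x$ itself would be an apex of $G$. If in addition $B_{2}\not\cong C_{k}$, then $x$ has degree $\geq 3$ in the biconnected graph $B_{2}$, and contracting in $B_{2}\setminus x$ the Steiner sub-tree spanned by three neighbours of $x$ yields $K_{4}^{-}$ or $K_{2,3}$ as a minor of $B_{2}$ containing $x$; since $K_{2,3}$ itself has $K_{4}^{-}$ as a minor, $B_{2}$ has a $K_{4}^{-}$-minor using $x$. Combined with the $K_{4}^{-}$ or $Z$-minor of $B_{1}\setminus x$ from \autoref{pensions} (which avoids $x$), this gives $O_{2}^{0}=2K_{4}^{-}$ or $O_{4}^{0}=K_{4}^{-}+Z$ as a proper minor of $G$, a contradiction. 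The final subcase is $B_{2}\cong C_{k}$ with $k\geq 4$: here contracting $C_{k}$ down to a triangle meeting $B_{1}$ at $x$ produces a proper minor $G'=B_{1}\cup B_{2}'$ of $G$ with $B_{2}'\cong K_{3}$, which by minimality of $G$ lies in ${\cal A}_{1}({\cal S})$. A short check using that $B_{1}\setminus x$ has $\geq 2$ cycles excludes $x$ and the two non-$x$ vertices of $B_{2}'$ as apices of $G'$, so any apex of $G'$ must be some $v\in V(B_{1})\setminus\{x\}$ with $B_{1}\setminus v$ a forest; but then $G\setminus v=(B_{1}\setminus v)\cup C_{k}$ has $C_{k}$ as its unique cycle, whence $v$ is an apex of $G$ as well, contradicting $G\in{\bf obs}({\cal A}_{1}({\cal S}))$.
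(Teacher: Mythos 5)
Your proposal is correct, and its first half coincides with the paper's: the decisive construction — two internally disjoint paths from the cut-vertex $x$ to a cycle avoiding $x$, contracted into a $K_{4}^{-}$ in which $x$ misses one edge, and two such gadgets glued at $x$ giving ${O}_{4}^{1}$ — is exactly the Menger-fan argument of the paper's Claim~1. Where you genuinely diverge is in forcing near-biconnectedness afterwards. The paper lets $H$ be the unique piece of ${\cal C}(G,x)$ with a cycle avoiding $x$, sets $D$ to be the union of all the other pieces, applies \autoref{pensions} to $D$ and to $H\setminus x$ to exclude ${O}_{2}^{0},{O}_{3}^{0},{O}_{4}^{0}$, and then concludes $D\cong K_{3}$ in one stroke from $D\in{\cal S}$ together with \autoref{segments}. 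You instead unfold this into explicit case work: for three or more blocks you build $3K_{3}$ or a butterfly through $x$; for a single ``cycle-free off $x$'' block $B_{2}$ you split according to whether $B_{2}$ is a cycle, using a Steiner-tree contraction to a $K_{4}^{-}$ or $K_{2,3}$ minor (hence $2K_4^-$ or $K_4^-+Z$) when it is not, and a contraction-plus-apex-transfer argument (contract $C_{k}$ to a triangle, invoke minimality of $G$ to get an apex of the minor, and pull it back to an apex of $G$) when it is. Your route is longer but makes every step constructive and avoids the paper's slightly compressed inference ``$D\in{\cal S}$ plus \autoref{segments} implies $D\cong K_{3}$''; the paper's route is shorter because it works with the whole union $D$ at once. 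Two of your steps are asserted rather than proved but are easy to fill in: that all blocks of $G$ contain the unique cut-vertex $x$ and coincide with the members of ${\cal C}(G,x)$, and that a biconnected block $B_{2}$ with $B_{2}\setminus x$ a tree and $\deg_{B_{2}}(x)=2$ must be a cycle (all leaves of the tree must be neighbours of $x$, so the tree is a path).
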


 \begin{proof}
 	 Notice that ${{O}_{4}^{1}}\in {\bf obs}({\cal A}_{1}({\cal S}))$.
 	 Let $G$ be a graph in ${\bf obs}({\cal A}_{1}({\cal S}))\setminus\{ {{O}_{4}^{1}}\}$. As in the the proof of \autoref{daylight}, keep in mind that  $G$ does not contain any graph in ${\cal O}^{0}$ as a minor.

 We first prove the following claim.
  
  \medskip
  
  \noindent{\em Claim 1:} There exists a unique component in ${\cal C}(G,x)$ which contains a cycle disjoint from $x.$\smallskip
  
  \noindent{\em Proof of Claim 1:}
  First, we easily observe that there exists such a component in ${\cal C}(G,x).$
  Suppose that there exist two different components in ${\cal C}(G,x),$
  each of which contains a cycle disjoint to $x.$
  Then, due to Menger's theorem, for each of said cycles there exist two paths from $x$ to the cycle being considered, intersecting only in $x.$
  But then ${{O}_{4}^{1}}$ is formed as a minor of $G,$ a contradiction (see \autoref{softened}). Claim follows.

  \begin{figure}[!h]
   \centering
   \vspace{-.5cm}
   \begin{tikzpicture}[myNode/.style = black node, scale=.7]
   
   \node[myNode, label=above:$x$] (u1) at (0,1) {};
   \coordinate (u1t) at (-3,1) {};
   \coordinate (u1s) at (3,1) {};

   \begin{scope}[on background layer]
   \fill[mustard!90] (u1.center) to [out=135, in=90] (u1t.center) to [out=270, in=225] (u1.center);
   \path[-, name path= path1] (u1.center).. controls ++(150:3) and ++(210:3) .. (u1.center);
   \node (C) at ($(u1t)!0.3!(u1)$) {};
   \draw[-, name path= path2] (C) circle (0.5cm);
   \draw [name intersections={of= path1 and path2}][dashed, thick]
   \foreach \s in {1,2}{
    (intersection-\s) -- (u1.center)
   };
   \draw [name intersections={of= path1 and path2}]
   \foreach \s in {1,2}{
    (intersection-\s) node[myNode] {}
   };
   \end{scope}
   
   \begin{scope}[on background layer]
   \fill[mustard!90] (u1.center) to [out=45, in=90] (u1s.center) to [out=270, in=315] (u1.center);
   \path[-, name path= path1] (u1.center).. controls ++(30:3) and ++(-30:3) .. (u1.center);
   \node (C2) at ($(u1s)!0.3!(u1)$) {};
   \draw[-, name path= path2] (C2) circle (0.5cm);
   \draw [name intersections={of= path1 and path2}][dashed, thick]
   \foreach \s in {1,2}{
    (intersection-\s) -- (u1.center)
   };
   \draw [name intersections={of= path1 and path2}]
   \foreach \s in {1,2}{
    (intersection-\s) node[myNode] {}
   };
   \end{scope}
   
   \end{tikzpicture}
   \caption{The configuration in the proof of Claim 1.}\label{softened}
  \end{figure}
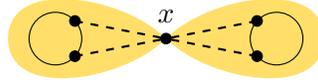
  
  Let $H\in {\cal C}(G,x)$ be the unique, by Claim 1, component in  ${\cal C}(G,x)$ which contains a cycle 
  disjoint to $x.$ Also, let $D = \cupall \{ H^{'} \in {\cal C}(G,x) :H^{'}\not= H \}.$
  
  \medskip
  
  \noindent{\em Claim 2:} $D\cong K_{3}$
  
  \noindent{\em Proof of Claim 2:}
  We argue that $D\in {\cal S},$ which, due to \autoref{segments}, implies that $D\cong K_{3}.$
  Suppose, to the contrary, that $D\not\in {\cal S}.$ Then, since $D$ is connected, \autoref{pensions} implies that ${\bf obs}({\cal S})\setminus\{2K_{3}\} \leq D.$
  By Claim 1, every cycle in $D$ contains $x$ and therefore, since $x$ is not an {${\cal S}$-apex} vertex of $G,$ $H\setminus x$ contains at least two cycles. Then, taking into account the connectivity of $H\setminus x$, \autoref{pensions} implies that ${\bf obs}({\cal S})\setminus\{2K_{3}\} \leq H\setminus x$. 
  Hence, $\{{{O}_{2}^{0}}, {{O}_{3}^{0}}, {{O}_{4}^{0}}\}\leq G,$ a contradiction. Claim 2 follows.
  
  \medskip
  
Claim 2 implies that $G$ is nearly-biconnected, as required.
 \end{proof}

\subsection{Borrowing obstructions from apex-presudoforests}

We need the following fact:

\begin{fact}
\label{begrudge}
The graphs in ${\bf obs}({\cal A}_{1}({\cal P}))$ that are nearly-biconnected and
belong in  ${\bf obs}({\cal A}_{1}({\cal S}))$ are the graphs in Figure~\ref{bringing}.
\end{fact}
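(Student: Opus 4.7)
The plan is to verify the fact by inspection, leveraging the complete list of $\mathbf{obs}(\mathcal{A}_{1}(\mathcal{P}))$ obtained in \cite{Leivaditis18mino}. Since every sub-unicyclic graph is a pseudoforest, we have $\mathcal{S}\subseteq\mathcal{P}$, hence $\mathcal{A}_{1}(\mathcal{S})\subseteq\mathcal{A}_{1}(\mathcal{P})$. Consequently, any $G\in\mathbf{obs}(\mathcal{A}_{1}(\mathcal{P}))$ is automatically outside $\mathcal{A}_{1}(\mathcal{S})$, so the only remaining question for membership in $\mathbf{obs}(\mathcal{A}_{1}(\mathcal{S}))$ is whether every proper minor of $G$ lies in $\mathcal{A}_{1}(\mathcal{S})$.

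First I would extract from \cite{Leivaditis18mino} the sublist of those obstructions that are nearly-biconnected in the sense defined above. For each such graph $G$, every proper minor $H$ already satisfies $H\in\mathcal{A}_{1}(\mathcal{P})$ by the minor-minimality of $G$ with respect to $\mathcal{A}_{1}(\mathcal{P})$. Thus $H$ fails to belong to $\mathcal{A}_{1}(\mathcal{S})$ precisely when, for every vertex $v\in V(H)$, the pseudoforest $H\setminus v$ contains two cycles lying in distinct components. This reduces the entire verification to a finite, mechanical check: for each edge $e\in E(G)$, inspect $G\setminus e$ and $G/e$ and exhibit a vertex whose removal leaves a sub-unicyclic graph.

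Carrying out this inspection for the nearly-biconnected members of $\mathbf{obs}(\mathcal{A}_{1}(\mathcal{P}))$ partitions them into two groups: those for which the sub-unicyclic check succeeds on every proper minor (which thereby survive as members of $\mathbf{obs}(\mathcal{A}_{1}(\mathcal{S}))$) and those for which some proper minor already violates sub-unicyclicity (which must therefore contain a strictly smaller obstruction from $\mathbf{obs}(\mathcal{A}_{1}(\mathcal{S}))$ and are hence excluded). The main obstacle is the bookkeeping: for each candidate $G$ one must both exhibit a 1-apex sub-unicyclic set for every single-edge deletion and contraction, and simultaneously certify that no such set exists for $G$ itself. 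Since the candidate graphs are small and of bounded order, this is routine and yields exactly the list depicted in Figure~\ref{bringing}, establishing the fact.
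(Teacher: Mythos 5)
Your proposal is correct and follows essentially the same approach as the paper: use $\mathcal{S}\subseteq\mathcal{P}$ to get $G\notin\mathcal{A}_{1}(\mathcal{S})$ for free, then filter the 26 nearly-biconnected graphs in $\mathbf{obs}(\mathcal{A}_{1}(\mathcal{P}))$ by an exhaustive check that every single-edge deletion and contraction yields a graph admitting a $1$-apex sub-unicyclic set. (One small imprecision: after deleting a vertex $v$ from a proper minor $H$, the resulting graph need not be a pseudoforest for every $v$, and when it fails to be sub-unicyclic the extra cycles need not lie in distinct components; but this does not affect the mechanics of the check.)
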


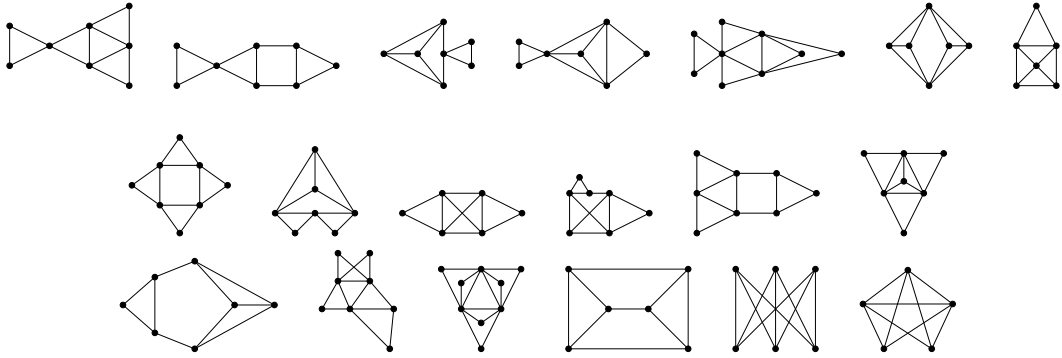
\begin{figure}[!h]
	\centering

  \begin{tabular}{ c c c c c c c }
   {
    \scalebox{.53}{\begin{tikzpicture}[myNode/.style = black node]
     \node[myNode] (n1) at (1,1) {};
     \node[myNode] (n1) at (1,2) {};
     \node[myNode] (n1) at (2,1.5) {};
     \node[myNode] (n1) at (3,1) {};
     \node[myNode] (n1) at (3,2) {};
     \node[myNode] (n1) at (4,1.5) {};
     \node[myNode] (n1) at (4,0.5) {};
     \node[myNode] (n1) at (4,2.5) {};
     
     \draw (1,1) -- (1,2);
     \draw (1,1) -- (2,1.5);
     \draw (1,2) -- (2,1.5);
     \draw (2,1.5) -- (3,1);
     \draw (2,1.5) -- (3,2);
     \draw (3,1) -- (3,2);
     \draw (4,0.5) -- (4,1.5);
     \draw (4,2.5) -- (4,1.5);
     \draw (4,1.5) -- (3,1);
     \draw (4,1.5) -- (3,2);
     \draw (4,0.5) -- (3,1);
     \draw (4,2.5) -- (3,2);
     
     \end{tikzpicture}}} &
   {
    \scalebox{.53}{\begin{tikzpicture}[myNode/.style = black node]
     
     \node[myNode] (n1) at (1,1) {};
     \node[myNode] (n1) at (1,2) {};
     \node[myNode] (n1) at (2,1.5) {};
     \node[myNode] (n1) at (3,1) {};
     \node[myNode] (n1) at (4,1) {};
     \node[myNode] (n1) at (5,1.5) {};
     \node[myNode] (n1) at (3,2) {};
     \node[myNode] (n1) at (4,2) {};

     \draw (1,1) -- (1,2);
     \draw (1,1) -- (2,1.5);
     \draw (1,2) -- (2,1.5);
     \draw (2,1.5) -- (3,1);
     \draw (2,1.5) -- (3,2);
     \draw (3,1) -- (4,1);
     \draw (3,2) -- (4,2);
     \draw (4,1) -- (5,1.5);
     \draw (4,2) -- (5,1.5);
     \draw (3,1) -- (3,2);
     \draw (4,1) -- (4,2);
     
     \end{tikzpicture}}}&
 {
    \scalebox{.53}{\begin{tikzpicture}[myNode/.style = black node]
     \node[myNode] (n1) at (3.2,1.2) {};
     \node[myNode] (n1) at (3.2,1.8) {};
     \node[myNode] (n1) at (1,1.5) {};
     \node[myNode] (n1) at (1.85,1.5) {};
     \node[myNode] (n1) at (2.5,0.7) {};
     \node[myNode] (n1) at (2.5,2.3) {};
     \node[myNode] (n1) at (2.5,1.5) {};
     
     \draw (3.2,1.2) -- (3.2,1.8);
     \draw (3.2,1.2) -- (2.5,1.5);
     \draw (3.2,1.8) -- (2.5,1.5);
     \draw (1,1.5) -- (1.85,1.5) ;
     \draw (1,1.5) -- (2.5,0.7);
     \draw (1,1.5) -- (2.5,2.3);
     \draw (1.85,1.5) -- (2.5,0.7);
     \draw (1.85,1.5) -- (2.5,2.3);
     \draw (2.5,0.7) -- (2.5,2.3);
     
     \end{tikzpicture}}} &
 {
    \scalebox{.53}{\begin{tikzpicture}[myNode/.style = black node]
     \node[myNode] (n1) at (0.3,1.2) {};
     \node[myNode] (n1) at (0.3,1.8) {};
     \node[myNode] (n1) at (1,1.5) {};
     \node[myNode] (n1) at (1.85,1.5) {};
     \node[myNode] (n1) at (2.5,0.7) {};
     \node[myNode] (n1) at (2.5,2.3) {};
     \node[myNode] (n1) at (3.5,1.5) {};
     
     \draw (0.3,1.2) -- (0.3,1.8);
     \draw (0.3,1.2) -- (1,1.5);
     \draw (0.3,1.8) -- (1,1.5);
     \draw (1,1.5) -- (1.85,1.5) ;
     \draw (1,1.5) -- (2.5,0.7);
     \draw (1,1.5) -- (2.5,2.3);
     \draw (1.85,1.5) -- (2.5,0.7);
     \draw (1.85,1.5) -- (2.5,2.3);
     \draw (2.5,0.7) -- (2.5,2.3);
     \draw (2.5,0.7) -- (3.5,1.5);
     \draw (2.5,2.3) -- (3.5,1.5);
     \end{tikzpicture}}} &
 {
    \scalebox{.53}{\begin{tikzpicture}[myNode/.style = black node]
     
     \node[myNode] (n1) at (1.3,1) {};
     \node[myNode] (n1) at (1.3,2) {};
     \node[myNode] (n1) at (2,1.5) {};
     \node[myNode] (n1) at (3,1) {};
     \node[myNode] (n1) at (3,2) {};
     \node[myNode] (n1) at (4,1.5) {};
     \node[myNode] (n1) at (2,2.3) {};
     \node[myNode] (n1) at (2,0.7) {};
     \node[myNode] (n1) at (5,1.5) {};
     
     \draw (1.3,1) -- (1.3,2);
     \draw (1.3,1) -- (2,1.5);
     \draw (1.3,2) -- (2,1.5);
     \draw (2,1.5) -- (3,1);
     \draw (2,1.5) -- (3,2);
     \draw (3,1) -- (3,2);
     \draw (4,1.5) -- (3,1);
     \draw (4,1.5) -- (3,2);
     \draw (2,0.7) -- (3,1);
     \draw (2,2.3) -- (3,2);
     \draw (2,2.3) -- (2,1.5);
     \draw (2,0.7) -- (2,1.5);
     \draw (3,1) -- (5,1.5);
     \draw (3,2) -- (5,1.5);
     
     \end{tikzpicture}}} &
 {
 	\scalebox{.53}{\begin{tikzpicture}[myNode/.style = black node]
 		
 		\node[myNode] (n1) at (2,3) {};
 		\node[myNode] (n1) at (1,2) {};
 		\node[myNode] (n1) at (3,2) {};
 		\node[myNode] (n1) at (2,1) {};
 		\node[myNode] (n1) at (1.5,2) {};
 		\node[myNode] (n1) at (2.5,2) {};
 		
 		\draw (2,3) -- (1,2);
 		\draw (2,3) -- (3,2);
 		\draw (2,3) -- (1.5,2);
 		\draw (2,3) -- (2.5,2);
 		\draw (2,1) -- (1,2);
 		\draw (2,1) -- (3,2);
 		\draw (2,1) -- (1.5,2);
 		\draw (2,1) -- (2.5,2);
 		\draw (1,2) -- (1.5,2);
 		\draw (2.5,2) -- (3,2);
 		\end{tikzpicture}}} &
 {
 	\scalebox{.53}{\begin{tikzpicture}[myNode/.style = black node]
 		
 		\node[myNode] (n1) at (1.5,3) {};
 		\node[myNode] (n1) at (1,2) {};
 		\node[myNode] (n1) at (2,2) {};
 		\node[myNode] (n1) at (1.5,1.5) {};
 		\node[myNode] (n1) at (1,1) {};
 		\node[myNode] (n1) at (2,1) {};
 		
 		\draw (1.5,3) -- (1,2);
 		\draw (1.5,3) -- (2,2);
 		\draw (1,2) -- (2,2);
 		\draw (1,2) -- (1,1);
 		\draw (1,2) -- (2,1);
 		\draw (2,2) -- (1,1);
 		\draw (2,2) -- (2,1);
 		\draw (1,1) -- (2,1);
 		\end{tikzpicture}}} \\
  \end{tabular}
  \bigskip
  
    \centering
  \begin{tabular}{ c c c c c c c c c c c c c c c c c }
  
   {
    \scalebox{.53}{\begin{tikzpicture}[myNode/.style = black node]
     \node[myNode] (n1) at (1.5,2.7) {};
     \node[myNode] (n1) at (1,2) {};
     \node[myNode] (n1) at (2,2) {};
     \node[myNode] (n1) at (1.5,0.3) {};
     \node[myNode] (n1) at (1,1) {};
     \node[myNode] (n1) at (2,1) {};
     \node[myNode] (n1) at (0.3,1.5) {};
     \node[myNode] (n1) at (2.7,1.5) {};
     
     \draw (1.5,2.7) -- (1,2);
     \draw (1.5,2.7) -- (2,2);
     \draw (1.5,0.3) -- (1,1);
     \draw (1.5,0.3) -- (2,1);
     \draw (0.3,1.5) -- (1,1);
     \draw (0.3,1.5) -- (1,2);
     \draw (2.7,1.5) -- (2,1);
     \draw (2.7,1.5) -- (2,2);
     \draw (1,2) -- (2,2);
     \draw (1,2) -- (1,1);
     \draw (2,2) -- (2,1);
     \draw (1,1) -- (2,1);
     \end{tikzpicture}}} &
   %
   {
    \scalebox{.53}{\begin{tikzpicture}[myNode/.style = black node]
     
     \node[myNode] (n1) at (1,5) {}; 
     \node[myNode] (n1) at (3,5) {};
     \node[myNode] (n1) at (2,5.6) {};
     \node[myNode] (n1) at (2,6.6) {};
     \node[myNode] (n1) at (2,5) {}; 
     \node[myNode] (n1) at (1.5,4.5) {};
     \node[myNode] (n1) at (2.5,4.5) {};
     
     \draw (1,5) -- (3,5);
     \draw (1,5) -- (2,5.6);
     \draw (1,5) -- (2,6.6);
     \draw (3,5) -- (2,5.6);
     \draw (3,5) -- (2,6.6);
     \draw (2,5.6) -- (2,6.6);
     \draw (1.5,4.5) -- (1,5); 
     \draw (1.5,4.5) -- (2,5);
     \draw (2.5,4.5) -- (2,5);
     \draw (2.5,4.5) -- (3,5);
     
     \end{tikzpicture}}} &
   %
   {
    \scalebox{.53}{\begin{tikzpicture}[myNode/.style = black node]
     
     \node[myNode] (n1) at (2,1.5) {};
     \node[myNode] (n1) at (3,1) {};
     \node[myNode] (n1) at (4,1) {};
     \node[myNode] (n1) at (5,1.5) {};
     \node[myNode] (n1) at (3,2) {};
     \node[myNode] (n1) at (4,2) {};
     
     \draw (2,1.5) -- (3,1);
     \draw (2,1.5) -- (3,2);
     \draw (3,1) -- (4,1);
     \draw (3,2) -- (4,2);
     \draw (4,1) -- (5,1.5);
     \draw (4,2) -- (5,1.5);
     \draw (3,1) -- (3,2);
     \draw (4,1) -- (4,2);
     \draw (3,1) -- (4,2);
     \draw (3,2) -- (4,1);
     \end{tikzpicture}}} &
   {
    \scalebox{.53}{\begin{tikzpicture}[myNode/.style = black node]
     
     \node[myNode] (n1) at (3.5,2) {};
     \node[myNode] (n1) at (3.25,2.4) {};
     \node[myNode] (n1) at (3,1) {};
     \node[myNode] (n1) at (4,1) {};
     \node[myNode] (n1) at (5,1.5) {};
     \node[myNode] (n1) at (3,2) {};
     \node[myNode] (n1) at (4,2) {};
     
     \draw (3,1) -- (4,1);
     \draw (3,2) -- (4,2);
     \draw (4,1) -- (5,1.5);
     \draw (4,2) -- (5,1.5);
     \draw (3,1) -- (3,2);
     \draw (4,1) -- (4,2);
     \draw (3,1) -- (4,2);
     \draw (3,2) -- (4,1);
     \draw (3.25,2.4) -- (3,2);
     \draw (3.25,2.4) -- (3.5,2);
     
     \end{tikzpicture}}} &
   {
    \scalebox{.53}{\begin{tikzpicture}[myNode/.style = black node]
     
     \node[myNode] (n1) at (2,0.5) {};
     \node[myNode] (n1) at (2,2.5) {};
     \node[myNode] (n1) at (2,1.5) {};
     \node[myNode] (n1) at (3,1) {};
     \node[myNode] (n1) at (4,1) {};
     \node[myNode] (n1) at (5,1.5) {};
     \node[myNode] (n1) at (3,2) {};
     \node[myNode] (n1) at (4,2) {};
     
     \draw (2,0.5) -- (3,1);
     \draw (2,2.5) -- (3,2);
     \draw (2,2.5) -- (2,1.5);
     \draw (2,0.5) -- (2,1.5);
     \draw (2,1.5) -- (3,1);
     \draw (2,1.5) -- (3,2);
     \draw (3,1) -- (4,1);
     \draw (3,2) -- (4,2);
     \draw (4,1) -- (5,1.5);
     \draw (4,2) -- (5,1.5);
     \draw (3,1) -- (3,2);
     \draw (4,1) -- (4,2);
     \end{tikzpicture}}} &
   %
   {
    \scalebox{.53}{\begin{tikzpicture}[myNode/.style = black node]
     
     \node[myNode] (n1) at (1,7) {};
     \node[myNode] (n1) at (3,7) {};
     \node[myNode] (n1) at (2,7) {};
     \node[myNode] (n1) at (1.5,6) {};
     \node[myNode] (n1) at (2.5,6) {};
     \node[myNode] (n1) at (2,5) {};
     \node[myNode] (n1) at (2,6.3) {};
     
     \draw (1,7) -- (2,7);
     \draw (3,7) -- (2,7);
     \draw (1,7) -- (1.5,6);
     \draw (2.5,6) -- (3,7);
     \draw (2.5,6) -- (2,7);
     \draw (1.5,6) -- (2,7);
     \draw (2.5,6) -- (1.5,6);
     \draw (2.5,6) -- (2,5);
     \draw (1.5,6) -- (2,5);
     \draw (2,6.3) -- (1.5,6);
     \draw (2,6.3) -- (2.5,6);
     \draw (2,6.3) -- (2,7);
     \end{tikzpicture}}} \\
 \end{tabular}

\begin{tabular}{ c c c c c c c c c c c c c c c c c }
   
   {
    \scalebox{.53}{\begin{tikzpicture}[myNode/.style = black node]
     
     \node[myNode] (n1) at (0.2,2) {};
     \node[myNode] (n1) at (1,2.7) {};
     \node[myNode] (n1) at (1,1.3) {};
     \node[myNode] (n1) at (2,3.1) {};
     \node[myNode] (n1) at (2,0.9) {};
     \node[myNode] (n1) at (3,2) {};
     \node[myNode] (n1) at (4,2) {};
     
     \draw (0.2,2) -- (1,2.7);
     \draw (0.2,2) -- (1,1.3);
     \draw (1,2.7) -- (1,1.3);
     \draw (1,2.7) -- (2,3.1);
     \draw (1,1.3) -- (2,0.9);
     \draw (2,3.1) -- (3,2);
     \draw (2,3.1) -- (4,2);
     \draw (2,0.9) -- (3,2);
     \draw (2,0.9) -- (4,2);
     \draw (3,2) -- (4,2);
     \end{tikzpicture}}} &
   {
    \scalebox{.53}{\begin{tikzpicture}[myNode/.style = black node]
     
     \node[myNode] (n1) at (0.3,0.3) {};
     \node[myNode] (n1) at (1,0.3) {};
     \node[myNode] (n1) at (2.1,0.3) {};
     \node[myNode] (n1) at (0.7,1) {};
     \node[myNode] (n1) at (0.7,1.7) {};
     \node[myNode] (n1) at (1.5,1) {};
     \node[myNode] (n1) at (1.5,1.7) {};
     \node[myNode] (n1) at (2,-0.7) {};
     
     \draw (0.3,0.3) -- (1,0.3);
     \draw (0.3,0.3) -- (0.7,1);
     \draw (1,0.3) -- (0.7,1);
     \draw (1,0.3) -- (1.5,1);
     \draw (1,0.3) -- (2.1,0.3);
     \draw (1,0.3) -- (2,-0.7);
     \draw (2.1,0.3) -- (1.5,1);
     \draw (2.1,0.3) -- (2,-0.7);
     \draw (0.7,1) -- (0.7,1.7);
     \draw (0.7,1) -- (1.5,1);
     \draw (0.7,1) -- (1.5,1.7);
     \draw (0.7,1.7) -- (1.5,1);
     \draw (1.5,1) -- (1.5,1.7);
     \end{tikzpicture}}} &
   {
    \scalebox{.53}{\begin{tikzpicture}[myNode/.style = black node]
     
     \node[myNode] (n1) at (1,7) {};
     \node[myNode] (n1) at (3,7) {};
     \node[myNode] (n1) at (2,7) {};
     \node[myNode] (n1) at (1.5,6) {};
     \node[myNode] (n1) at (2.5,6) {};
     \node[myNode] (n1) at (2,5) {};
     \node[myNode] (n1) at (2.5,6.65) {};
     \node[myNode] (n1) at (1.5,6.65) {};
     \node[myNode] (n1) at (2,5.65) {};

     \draw (1,7) -- (2,7);
     \draw (3,7) -- (2,7);
     \draw (1,7) -- (1.5,6);
     \draw (2.5,6) -- (3,7);
     \draw (2.5,6) -- (2,7);
     \draw (1.5,6) -- (2,7);
     \draw (2.5,6) -- (1.5,6);
     \draw (2.5,6) -- (2,5);
     \draw (1.5,6) -- (2,5);
     \draw (2,5.65) -- (1.5,6);
     \draw (2,5.65) -- (2.5,6);
     \draw (1.5,6.65) -- (1.5,6);
     \draw (1.5,6.65) -- (2,7);
     \draw (2.5,6.65) -- (2.5,6);
     \draw (2.5,6.65) -- (2,7);
     \end{tikzpicture}}} &   
   {
    \scalebox{.53}{\begin{tikzpicture}[myNode/.style = black node]
     
     \node[myNode] (n1) at (1,3) {};
     \node[myNode] (n1) at (2,2) {};
     \node[myNode] (n1) at (1,1) {};
     \node[myNode] (n1) at (4,3) {};
     \node[myNode] (n1) at (3,2) {};
     \node[myNode] (n1) at (4,1) {};
     
     \draw (1,3) -- (2,2);
     \draw (1,1) -- (2,2);
     \draw (1,3) -- (4,3);
     \draw (1,3) -- (1,1);
     \draw (1,1) -- (4,1);
     \draw (2,2) -- (3,2);
     \draw (3,2) -- (4,1);
     \draw (3,2) -- (4,3);
     \draw (4,1) -- (4,3);
     
     \end{tikzpicture}}} &
   {
    \scalebox{.53}{\begin{tikzpicture}[myNode/.style = black node]
     
     \node[myNode] (n1) at (1,3) {};
     \node[myNode] (n1) at (2,3) {};
     \node[myNode] (n1) at (3,3) {};
     \node[myNode] (n1) at (1,1) {};
     \node[myNode] (n1) at (2,1) {};
     \node[myNode] (n1) at (3,1) {};
     
     \draw (1,3) -- (1,1);
     \draw (1,3) -- (2,1);
     \draw (1,3) -- (3,1);
     \draw (2,3) -- (1,1);
     \draw (2,3) -- (2,1);
     \draw (2,3) -- (3,1);
     \draw (3,3) -- (1,1);
     \draw (3,3) -- (2,1);
     \draw (3,3) -- (3,1);
     
     \end{tikzpicture}}} &
   {
    \scalebox{.53}{\begin{tikzpicture}[myNode/.style = black node]
     
     \node[myNode] (n1) at (2,2.6) {};
     \node[myNode] (n1) at (0.875,1.75) {};
     \node[myNode] (n1) at (3.125,1.75) {};
     \node[myNode] (n1) at (1.4,0.625) {};
     \node[myNode] (n1) at (2.6,0.625) {};
     
     \draw (2,2.6) -- (0.875,1.75);
     \draw (2,2.6) -- (3.125,1.75);
     \draw (2,2.6) -- (1.4,0.625);
     \draw (2,2.6) -- (2.6,0.625);
     \draw (0.875,1.75) -- (3.125,1.75);
     \draw (0.875,1.75) -- (1.4,0.625);
     \draw (0.875,1.75) -- (2.6,0.625);
     \draw (3.125,1.75) -- (1.4,0.625);
     \draw (3.125,1.75) -- (2.6,0.625);
     
     \end{tikzpicture}}} \\
     \end{tabular}
  \caption{The set ${\cal L}_{1}$  of the 19  nearly-biconnected minor-obstructions for ${\cal A}_{1}({\cal S})$ that are also obstructions for ${\cal A}_{1}({\cal P})$.}
  \label{bringing}
	
\end{figure}


The set ${\bf obs}({\cal A}_{1}({\cal P}))$ consists of 33 graphs and  has been identified in~\cite{Leivaditis18mino}.
The correctness of \autoref{begrudge}
can be verified by exhaustive check, considering all nearly-biconnected graphs in ${\bf obs}({\cal A}_{1}({\cal P}))$ (they are 26) and then filter those 
that belong in ${\bf obs}({\cal A}_{1}({\cal S}))$. For this, one should pick
those that become 
apex sub-unicyclic after the contraction or removal of each of their edges. Notice that the fact that these graphs are not apex-sub-unicyclic follows directly by the fact that they are not apex-pseudoforests  (as members of ${\bf obs}({\cal A}_{1}({\cal P}))$)
and the fact that  ${\cal S}\subseteq {\cal P}$.
The choice of ${\cal L}_{1}$ is justified by the next lemma.

 \begin{lemma}\label{operetta}
If $G$ is a nearly-biconnected graph in ${\bf obs}({\cal A}_{1}({\cal S}))$, then $G\in{\bf obs}({\cal A}_{1}({\cal P}))$.
\end{lemma}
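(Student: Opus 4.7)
The plan is to reduce everything to showing that $G\notin {\cal A}_{1}({\cal P})$, since the minor-minimality half is automatic. Indeed, because ${\cal S}\subseteq {\cal P}$ we have ${\cal A}_{1}({\cal S})\subseteq {\cal A}_{1}({\cal P})$, so every proper minor of $G$ lies in ${\cal A}_{1}({\cal S})$ and hence in ${\cal A}_{1}({\cal P})$. Thus once a ${\cal P}$-apex in $G$ is ruled out, $G$ is automatically minor-minimal for the larger class.

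The core task is therefore: assume towards a contradiction that some $v\in V(G)$ is a ${\cal P}$-apex of $G$. Since $G\notin {\cal A}_{1}({\cal S})$, this same $v$ cannot be an ${\cal S}$-apex, so $G\setminus v$ is a pseudoforest that is not sub-unicyclic. A pseudoforest fails to be sub-unicyclic only if it contains two cycles in two different connected components; in particular, $G\setminus v$ must be disconnected, with (at least) two components each containing a cycle.

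I would then case-analyse on the nearly-biconnected structure of $G$. If $G$ is biconnected, then $G\setminus v$ is connected for every $v$, which immediately contradicts the disconnection requirement above. Otherwise $G$ has a unique cut-vertex $x$ with ${\cal C}(G,x)=\{H,K_{3}\}$, where $H$ is biconnected, and I would split according to whether $v\neq x$ or $v=x$. When $v\neq x$: if $v\in V(H)\setminus\{x\}$ then $H\setminus v$ is still connected (biconnectivity of $H$) and is glued to the triangle at $x$, so $G\setminus v$ is connected; if $v$ lies in the triangle (but $v\neq x$) then removing it leaves an edge attached at $x$ together with $H$, so again $G\setminus v$ is connected. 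Either subcase contradicts the required disconnection.

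The only slightly delicate case is $v=x$. Here $G\setminus x$ really does split into $H\setminus x$ (connected, as $H$ is biconnected) and a single $K_{2}$ (the triangle minus $x$). The $K_{2}$ carries no cycle, so every cycle of $G\setminus x$ lies in $H\setminus x$; but a connected pseudoforest is sub-unicyclic, so $H\setminus x$ has at most one cycle and hence so does $G\setminus x$. This forces $x$ to be an ${\cal S}$-apex of $G$, contradicting $G\notin {\cal A}_{1}({\cal S})$. This last subcase is the only mild obstacle and is exactly what justifies the choice of ``nearly-biconnected'' as the appropriate notion: attaching a triangle at a single cut-vertex is the most one can attach without opening an escape route for a ${\cal P}$-apex that is not an ${\cal S}$-apex.
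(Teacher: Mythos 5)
Your proposal is correct and follows essentially the same path as the paper: reduce to showing $G\notin{\cal A}_1({\cal P})$ using ${\cal A}_1({\cal S})\subseteq{\cal A}_1({\cal P})$, observe that a ${\cal P}$-apex which is not an ${\cal S}$-apex must disconnect $G$, and then use the nearly-biconnected structure to rule this out (the paper packages your case analysis on $v\neq x$ into a short claim that any ${\cal P}$-apex must be a cut-vertex, but the content is identical). The only minor point worth making explicit in your $v\in V(H)\setminus\{x\}$ subcase is that $H$ has at least three vertices (it cannot be $K_2$, since that would give $G$ a degree-one vertex, contradicting \autoref{segments}), so biconnectivity indeed yields that $H\setminus v$ is connected.
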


 \begin{proof}
 Let $G$ be a graph satisfying the assumptions of the lemma.
We need to show that $G \notin {\cal A}_{1}({\cal P})$ and that
 for every proper minor $H$ of $G$ it holds that $H\in {\cal A}_{1}({\cal P})$.
Notice that the latter is trivial since ${\cal A}_{1}({\cal S})\subseteq {\cal A}_{1}({\cal P})$
and therefore it remains to show that $G\notin {\cal A}_{1}({\cal P})$. We begin with the following claim:
 
 \medskip
 
 \noindent{\em Claim:} If $x\in V(G)$ is a ${\cal P}$-apex of $G$ then, then $x$ is a cut-vertex of $G$.
 
 \noindent{\em Proof of Claim:} Consider a vertex $x\in V(G)$.
 Since $G\in{\bf obs}({\cal A}_{1}({\cal S}))$, $x$ is not an ${\cal S}$-apex of $G$ and so there exist two
 cycles $C_{1}, C_{2}$ in $G\setminus x$. If $x$ is not a cut-vertex of $G$,
 then $G\setminus x$ is connected and therefore $C_{1}, C_{2}$ are in the same connected component of $G\setminus x$. Hence, $G\setminus x$ is not a pseudoforest and Claim follows.
 
 \medskip
 
 Suppose, towards a contradiction, that $G\in {\cal A}_{1}({\cal P})$. Then, there exists a vertex $x\in V(G)$ such that $G\setminus x$ is a pseudoforest.
 From the above Claim, $x$ is a
 cut-vertex of $G$ and since $G$ is nearly biconnected, ${\cal C}(G,x)=\{H,K_{3}\}$ where $H$ is a biconnected graph.
 Therefore, $H\setminus x$ is a connected component of $G\setminus x$ while the other connected component of $G\setminus x$ is a single edge.
 Therefore,   $G\setminus x$ contains at most one cycle which implies that $G\setminus x\in {\cal S}$, a contradiction.
 \end{proof}

 We are now ready to prove the main result of this section.
 
 \begin{theorem}
${\bf obs}({\cal A}_{1}({\cal S}))={\cal L}_{0}\cup {\cal L}_{1}.$
 \end{theorem}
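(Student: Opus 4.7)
The plan is to put the pieces developed in the section together, matching each possible structural type of a graph $G\in{\bf obs}({\cal A}_{1}({\cal S}))$ to the appropriate previous lemma.

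First I would establish the inclusion ${\cal L}_{0}\cup{\cal L}_{1}\subseteq{\bf obs}({\cal A}_{1}({\cal S}))$. The membership ${\cal L}_{0}\subseteq{\bf obs}({\cal A}_{1}({\cal S}))$ has already been recorded (either explicitly or as the base case) inside Lemmas \ref{presents}, \ref{immature}, \ref{daylight}, and \ref{handling}, so this half only requires collecting those remarks. For ${\cal L}_{1}$, the membership is exactly the content of \autoref{begrudge}: these $19$ graphs were picked from ${\bf obs}({\cal A}_{1}({\cal P}))$ precisely because they also lie in ${\bf obs}({\cal A}_{1}({\cal S}))$.

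For the reverse inclusion, take an arbitrary $G\in{\bf obs}({\cal A}_{1}({\cal S}))$ and perform a case analysis based on the number of cut-vertices of $G$. If $G$ is disconnected, \autoref{presents} gives $G\in{\cal O}^{0}\subseteq{\cal L}_{0}$. Otherwise $G$ is connected and I would split by cut-vertex count: if $G$ has at least three cut-vertices, \autoref{immature} gives $G\cong {O}_{1}^{1}\in{\cal L}_{0}$; if exactly two, \autoref{daylight} gives $G\in\{{O}_{2}^{1},{O}_{3}^{1}\}\subseteq{\cal L}_{0}$; if exactly one, \autoref{handling} says either $G\cong {O}_{4}^{1}\in{\cal L}_{0}$ or $G$ is nearly-biconnected; and if $G$ has no cut-vertex then it is biconnected, hence nearly-biconnected by definition.

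In the remaining case where $G$ is nearly-biconnected, I would apply \autoref{operetta} to conclude that $G\in{\bf obs}({\cal A}_{1}({\cal P}))$, and then invoke \autoref{begrudge} to deduce $G\in{\cal L}_{1}$. No step in this combination is really an obstacle, since the analysis in the subsections has already covered every structural possibility; the only point to be careful about is treating the purely biconnected case (zero cut-vertices), which is not explicitly handled by Lemmas \ref{immature}--\ref{handling} but falls directly under the nearly-biconnected branch and thus under \autoref{operetta} and \autoref{begrudge}.
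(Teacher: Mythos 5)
Your proposal is correct and follows essentially the same route as the paper: the same case split on connectivity and number of cut-vertices, each case dispatched to Lemmas~\ref{presents}, \ref{immature}, \ref{daylight}, \ref{handling}, with the nearly-biconnected residue resolved via Lemma~\ref{operetta} and Fact~\ref{begrudge}. The one small difference is that you explicitly note that the zero-cut-vertex (biconnected) case falls under ``nearly-biconnected'' by definition, which the paper leaves implicit; this is a harmless clarification, not a different argument.
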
 
 
 \begin{proof}
 Recall that ${\cal L}_{0}={\cal O}^{0}\cup\{{{O}_{1}^{1}},{{O}_{2}^{1}},{{O}_{3}^{1}},{{O}_{4}^{1}}\}$. Notice that ${\cal L}_{0}\cup {\cal L}_{1}\subseteq {\bf obs}({\cal A}_{1}({\cal S}))$.
Let $G\in  {\bf obs}({\cal A}_{1}({\cal S}))$. If $G$ is disconnected, then, from \autoref{presents},
$G\in{\cal O}^{0}$.  If $G$ is connected and has at least three cut-vertices, then from~\autoref{immature}, $G\cong {{O}_{1}^{1}}$. If $G$ is connected and has exactly two  cut-vertices, then from~\autoref{daylight}, $G\in\{{{O}_{2}^{1}},{{O}_{3}^{1}}\}$. If $G$ is connected with exactly one a cut-vertex and is not  nearly-biconnected
 then, from \autoref{handling},  $G\cong{{O}_{4}^{1}}$.
We just proved that if  $G$ is not nearly-biconnected, then $G\in {\cal L}_{0}$. On the other side, if $G$ is nearly-biconnected, then from \autoref{operetta},  $G\in{\bf obs}({\cal A}_{1}({\cal P}))$, therefore, from \autoref{begrudge}, $G\in{\cal L}_{1}$, as required.
 \end{proof}

 \section{Structural Characterisation of Cactus Obstructions}
 \label{opera}

Recall that a {\em cactus graph} is a graph where all its blocks are either edges or cycles.
Equivalently, a graph is a cactus graph, if it does not contain $K^{-}_{4}$ as a minor.
We denote by ${\cal K}$ the set of all cactus graphs.
In this section we provide a complete characterization of the class of ${\bf obs}({\cal A}_{k}{\cal S})\cap {\cal K}$,
i.e., the obstructions for $k$-apex sub-unicyclic graphs that are cactus graphs.

Given a graph $G$ and two vertices $x$ and $y$ of $G$, we call a pair $(x,y)\in (V(G))^2$  {\em anti-diametrical}
if there is no other pair $(x',y')$, where the distance between $x'$ and $y'$ in $G$ is bigger than
the distance in $G$ between $x$ and $y$. Notice that if $G$ is a tree, the 
two vertices in any anti-diametrical pair of $G$ are both leaves.

\paragraph{Block-cut-vertex Tree.}
Let $G$ be a connected graph. We denote by ${\cal B}(G)$ the set of its blocks and by $C(G)$, the set of its cut-vertices. 
We define the graph $T_{G}=({\cal B}(G)\cup C(G),E)$
where $E=\{\{B,c\}\mid B\in {\cal B}(G), v\in C(G), v\in V(B)\}$.
Notice that $T_{G}$ is a tree, called the {\em block-cut-vertex tree} of $G$ (or {\em bc-tree} in short). Furthermore, note that all its leafs are blocks of $G$.  We  call a block of $G$ {\em leaf-block} if $B$ is a leaf of $T_{G}$. We call a leaf-block $B$ of $G$ {\em peripheral} if  there is some leaf-block $B'$ of $G$ such that the pair $(B,B')$ is an  anti-diametrical pair of $T_{G}$.

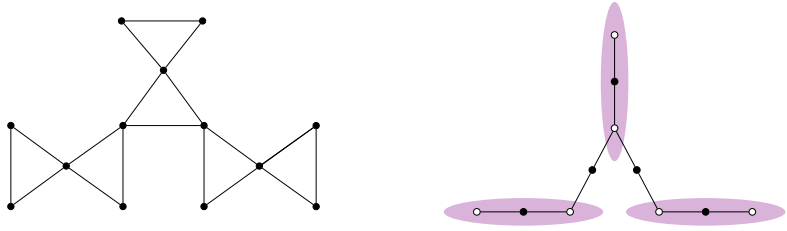
\begin{figure}
\begin{center}
\minipage{0.32\textwidth}
\resizebox{0.8\linewidth}{!}{
\begin{tikzpicture}[myNode/.style = black node]
     
     \node[myNode] (n1) at (10,0) {};
     \node[myNode] (n2) [below=10ex of n1]  {};
     \coordinate (midway1) at ($(n1)!0.5!(n2)$)  {};
     \node[myNode] (n3) [right=7ex of midway1] {};
     \node[myNode] (n4) [above right= 4.8 ex and 7ex of n3]  {};
     \node[myNode] (n5) [below=10ex of n4]  {};
     
     \node[myNode] (n6) [right=10ex of n4] {};
     \node[myNode] (n7) [below=10ex of n6]  {};
     \coordinate (midway2) at ($(n6)!0.5!(n7)$)  {};
     \node[myNode] (n8) [right=7ex of midway2] {};
     \node[myNode] (n9) [above right= 4.8 ex and 7ex of n8]  {};
     \node[myNode] (n10) [below=10ex of n9]  {};
     
     \coordinate (midway3) at ($(n4)!0.5!(n6)$)  {};
     \node[myNode] (n11) [above=7ex of midway3] {};
     \node[myNode] (n12) [above left= 6ex and 5ex of n11]  {};
     \node[myNode] (n13) [right=10ex of n12]  {};

     \draw (n1) -- (n2);
     \draw (n1) -- (n3);
     \draw (n2) -- (n3);
     \draw (n3) -- (n5);
     \draw (n3) -- (n4);
     \draw (n4) -- (n5);
     
     \draw (n6) -- (n7);
     \draw (n6) -- (n8);
     \draw (n7) -- (n9);
     \draw (n8) -- (n10);
     \draw (n8) -- (n9);
     \draw (n9) -- (n10);
     
     \draw (n4) -- (n6);
     \draw (n11) -- (n4);
     \draw (n11) -- (n6);
     \draw (n11) -- (n12);
     \draw (n11) -- (n13);
     \draw (n12) -- (n13);

\end{tikzpicture}}
\endminipage~~~
\minipage{0.35\textwidth}
\resizebox{.8\linewidth}{!}{
\begin{tikzpicture}[myNode/.style = black node]
     
    \node[myNode,fill=white] (n1) at (0,0) {};
    \node[myNode,fill=black] (n2) [below=5ex of n1]  {};
    \node[myNode,fill=white] (n3) [below=5ex of n2]  {};
    
    \node[myNode,fill=white] (n4) [below right =10ex and 5ex of n3]  {};
    \node[myNode,fill=black] (n5) [right = 5ex of n4]  {};
    \node[myNode,fill=white] (n6) [right = 5ex of n5]  {};
    
    \node[myNode,fill=white] (n7) [below left =10ex and 5ex of n3]  {};
    \node[myNode,fill=black] (n8) [left = 5ex of n7]  {};
    \node[myNode,fill=white] (n9) [left = 5ex of n8]  {};
    
    \coordinate (midway1) at ($(n3)!0.5!(n4)$)  {};
    \coordinate (midway2) at ($(n3)!0.5!(n7)$)  {};
    \node[myNode,fill=black] (n10) at (midway1) {};
    \node[myNode,fill=black] (n11) at (midway2) {};
    
    \draw (n1) -- (n2);
    \draw (n2) -- (n3);
    \draw (n3) -- (n10);
    \draw (n3) -- (n11);
    \draw (n10) -- (n4);
    \draw (n11) -- (n7);
    \draw (n4) -- (n5);
    \draw (n5) -- (n6);
    \draw (n7) -- (n8);
    \draw (n8) -- (n9);
    
    \begin{pgfonlayer}{background}
    \node[fit=(n7)(n9), fill=violet!30,ellipse] {};
    \node[fit=(n1)(n3), fill=violet!30,ellipse] {};
    \node[fit=(n4)(n6), fill=violet!30,ellipse] {};
    \end{pgfonlayer}
\end{tikzpicture}}
\endminipage
\end{center}

\caption{An example of a graph $G\in{\cal Z}_{3}$ and its block-cut-vertex tree $T_{G}$ with the $P_3$-subgraphs corresponding to the butterflies composing $G$ highlighted.}
\label{uttering}
\end{figure}

\subsection{Characterization of connected cactus-obstructions}

\paragraph{Butterflies and Butterfly-Cacti.}
We denote by $Z$ the butterfly graph. We will frequently refer to graphs isomorphic to $Z$ simply as {\em butterflies}. Given a butterfly $Z$ we call all its four vertices that have degree two, {\em extremal vertices} of $Z$ and the unique vertex of degree four, {\em central vertex} of $Z$.

Let $k$ be a positive integer.
We recursively define the graph class of the 
{\em $k$-butterfly-cacti}, denoted by ${\cal Z}_{k}$, as follows:
We set ${\cal Z}_{1}=\{Z\}$, where $Z$ is the butterfly graph,  
and given a $k\geq 2$ we say that 
$G\in {\cal Z}_{k}$ if there is a graph $G'\in {\cal Z}_{k-1}$ such that 
$G$ is obtained if we take a copy of the  butterfly graph $Z$ and then we identify one of its 
extremal vertices with a non-central vertex of $G'$. The {\em central vertices} of the obtained graph $G$  are the central vertices of $G'$ and the central vertex of $Z$. 
If $G\in{\cal Z}_{k}$, we denote by $K(G)$ the set of all central vertices of $G$.\medskip

We need the following observation.

\begin{observation} 
\label{furrowed}
For every $k\geq 1$ and for every $G\in {\cal Z}_{k}$, $K(G)$ is the unique $k$-apex forest set of $G$.
\end{observation}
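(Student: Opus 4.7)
The plan is to prove both parts of the statement---that $K(G)$ is a $k$-apex forest set and that it is the only one---by induction on $k$, exploiting the recursive construction of ${\cal Z}_k$ together with a structural decomposition through a ``leaf-butterfly.'' The base case $k=1$ is direct: removing the central vertex of $Z$ yields $2K_2$ (a forest), while removing any extremal vertex leaves the opposite triangle intact, so $K(Z)=\{c\}$ is the unique $1$-apex forest set.

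For existence in the inductive step, I would write $G\in{\cal Z}_k$ as arising from $G'\in{\cal Z}_{k-1}$ by identifying an extremal vertex of a new butterfly $Z$ with a non-central vertex $v$ of $G'$, so that $K(G)=K(G')\cup\{c_Z\}$. Then $G\setminus K(G)$ is obtained from $G'\setminus K(G')$---a forest by the inductive hypothesis---by attaching $Z\setminus c_Z = 2K_2$ at the single vertex $v$, and gluing two disjoint edges to a forest at one of their endpoints preserves acyclicity.

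The main obstacle will be uniqueness, for which I need the following \emph{leaf-butterfly lemma}: when $k\geq 2$, some butterfly $B^*$ in $G$ has at most one of its four extremal vertices identified with extremal vertices of other butterflies. To prove it, I would introduce the bipartite incidence graph $\mathcal{J}$ whose vertex set consists of the $k$ butterflies together with the equivalence classes of size $\geq 2$ of identified extremal vertices, with a butterfly adjacent to every class that intersects it. Starting from the single butterfly-vertex for $G=Z\in{\cal Z}_1$, each construction step either (a) adds a butterfly-vertex together with a new class-vertex of degree $2$, or (b) adds only a butterfly-vertex connected by a single new edge to an existing class-vertex. In both cases connectivity is preserved and $|V(\mathcal{J})|-|E(\mathcal{J})|=1$ throughout, so $\mathcal{J}$ is a tree; since every class-vertex has degree $\geq 2$ by definition, every leaf of $\mathcal{J}$ is a butterfly-vertex, yielding the desired $B^*$.

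Given a leaf-butterfly $B^*$ with shared extremal vertex $v^*$, central vertex $c^*$, and three non-shared extremal vertices $v_1^*,v_2^*,v_3^*$, define $G':=G\setminus(V(B^*)\setminus\{v^*\})\in{\cal Z}_{k-1}$. For any $k$-apex forest set $S$ of $G$, the set $S':=S\cap V(G')$ makes $G'\setminus S'$ an induced subgraph of the forest $G\setminus S$, hence itself a forest. If $c^*\in S$, then $|S'|\leq k-1$, so by induction $S'=K(G')$; this forces $v^*\notin S$ (else $v^*\in S'=K(G')$ yet $v^*$ is non-central in $G'$) and no $v_i^*\in S$ (else $|S'|\leq k-2<|K(G')|$), so $S=K(G')\cup\{c^*\}=K(G)$. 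If instead $c^*\notin S$, both triangles of $B^*$ must be covered by $V(B^*)\setminus\{c^*\}$: $v^*\in S$ again contradicts $v^*\notin K(G')$, while $v^*\notin S$ forces at least two of $v_1^*,v_2^*,v_3^*$ into $S$, giving $|S'|\leq k-2<k-1=|K(G')|$ and again contradicting the inductive uniqueness. Hence $c^*\in S$ and $S=K(G)$, closing the induction.
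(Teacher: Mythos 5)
Your base case, your existence argument, and your final case analysis (splitting on whether the central vertex $c^*$ of the peeled-off butterfly lies in $S$, and using the inductive uniqueness of $K(G')$ to pin down $S\cap V(G')$) are sound and are in the same spirit as the paper's proof, which also peels off one butterfly and argues that $S$ must meet it exactly in its central vertex. The genuine gap is the step where, after locating a leaf-butterfly $B^*$ with unique shared extremal vertex $v^*$, you simply ``define $G':=G\setminus(V(B^*)\setminus\{v^*\})\in{\cal Z}_{k-1}$.'' Since ${\cal Z}_{k}$ is defined recursively by a specific gluing order, it is not immediate that deleting an \emph{arbitrary} leaf butterfly of the incidence tree leaves a member of ${\cal Z}_{k-1}$ (with $K(G')=K(G)\setminus\{c^*\}$ and $v^*$ non-central in $G'$): your $B^*$ need not be the butterfly glued last, so you need an exchange/reordering argument showing the construction sequence can be rearranged to end with $B^*$ (for instance, by induction: remove the last-glued butterfly $B_k$, check that $B^*$ is still a leaf of the smaller incidence tree, apply the inductive claim, and re-glue $B_k$). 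This claim is true, but it is the crux of why your more general decomposition works, and it is asserted without proof — which is inconsistent with the care you spend proving the comparatively easier leaf-butterfly lemma.

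Note also that the whole detour is avoidable, and this is exactly what the paper does: by the recursive definition, $G$ is obtained from some $G'\in{\cal Z}_{k-1}$ by gluing a butterfly $H$ at one extremal vertex, so this last-glued $H$ is automatically a ``leaf'' (only its attachment vertex meets the rest of $G$), the membership $G'\in{\cal Z}_{k-1}$ is given for free, and the triangle of $H$ disjoint from $G'$ plays the role of your outer triangle. With that choice your incidence-tree lemma becomes unnecessary and your case analysis goes through essentially verbatim (and can even be compressed, as in the paper, to: $S\cap V(G')=K(G')$ by minimality, so $S$ meets $H\setminus\{v^*\}$ in a single vertex lying on both of its triangles, which must be $c^*$). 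So either switch to the last-glued butterfly, or supply the reordering argument; as written, the membership of $G'$ in ${\cal Z}_{k-1}$ is an unproved and non-trivial step.
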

\begin{proof}
It is easy to observe that  $K(G)$ is a $k$-apex forest set of $G$. 
To prove that $K(G)$ is unique, suppose to the contrary that $k$ is the minimum number such
that there is a $G\in {\cal Z}_{k}$
and a $k$-apex forest set $S\subseteq V(G)$ where $S\neq K(G)$. Recall that $G$ is obtained by identifying
a non-central vertex of some member $G'$ of ${\cal Z}_{k-1}$ with an extremal vertex of some graph $H$ isomorphic to the butterfly
graph $Z$. Let now $C$ be the cycle of $H$ in $G$ that contains no vertices of $G'$. By the 
minimality of $k$, $S\setminus V(C)=K(G')$ and therefore $S\cap V(C)$ must contain only one vertex, 
namely $x$, which must also belong to the cycle of $H$ different from $C$. This implies 
that $x$ is the central vertex of $Z$, thus $S= K(G)$, a contradiction.
\end{proof}

The objective of this section is to prove the following theorem.

\begin{theorem}
\label{holiness}
For every non-negative integer $k$, the connected graphs in ${\bf obs}({\cal A}_{k}{\cal S})\cap {\cal K}$
are exactly the graphs in ${\cal Z}_{k+1}$.
\end{theorem}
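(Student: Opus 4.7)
The plan is to prove both inclusions separately: the forward direction $\mathcal{Z}_{k+1}\subseteq{\bf obs}(\mathcal{A}_{k}(\mathcal{S}))\cap\mathcal{K}$ by a direct counting argument, and the converse by induction on $k$.

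For the forward direction, let $G\in\mathcal{Z}_{k+1}$. By construction $G$ is a connected cactus with exactly $2(k+1)$ triangle blocks, and every vertex lies in at most two blocks — central vertices are in the two triangles of their butterfly, shared extremals are in one triangle per butterfly sharing them, and other extremals are in a single triangle. Hence any $S\subseteq V(G)$ with $|S|\leq k$ destroys the cycles of at most $2k$ blocks, so at least two cycles remain and $G\notin\mathcal{A}_{k}(\mathcal{S})$. To verify that every proper minor of $G$ lies in $\mathcal{A}_{k}(\mathcal{S})$, it suffices to handle one-step minors; in each case (delete a vertex, delete an edge, or contract an edge) the modification destroys the cycle of at least one triangle block $T$ belonging to some butterfly $Z_T$, and then $K(G)\setminus\{c_T\}$ (of size $k$, where $c_T$ is the central of $Z_T$) is a $k$-apex sub-unicyclic set of the modified graph: the surviving triangle of $Z_T$ contributes at most one cycle, and every other butterfly is reduced to two disjoint edges once its central is removed.

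For the converse, I would induct on $k$. The base case $k=0$ is immediate, since the connected graphs in ${\bf obs}(\mathcal{S})\cap\mathcal{K}$ are exactly $\{Z\}=\mathcal{Z}_{1}$ ($2K_3$ is disconnected and $K_{4}^{-}$ is not a cactus). For the inductive step with $k\geq 1$, let $G$ be a connected cactus obstruction for $\mathcal{A}_{k}(\mathcal{S})$. Using \autoref{segments}, every block of $G$ is a cycle (no bridges), and every leaf-block is a triangle: a non-cut vertex of a leaf-block of length $\geq 4$ would be a degree-$2$ vertex with non-adjacent neighbors, contradicting \autoref{segments}(3). A minor-minimality argument (deleting or contracting an edge in a hypothetical non-leaf block of length $\geq 4$ and showing that the resulting graph is still outside $\mathcal{A}_{k}(\mathcal{S})$) extends this to all blocks. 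A hitting-set count over the block structure — the number of cycles killable by $k$ vertex-deletions in a cactus — then forces $G$ to have exactly $2(k+1)$ blocks with every vertex in at most two of them. I would now select a peripheral leaf-block $B_1$ with cut-vertex $v$; since $v$ lies in exactly two blocks, its other block $B_2$ is a triangle and $\{B_1,B_2\}$ forms a butterfly $Z$ with central $v$. The peripherality of $B_1$ combined with minor-minimality forces $B_2$ to connect to the rest of $G$ through a single vertex $w$. Setting $G''=G\setminus(V(Z)\setminus\{w\})$ yields a connected cactus, and I would show $G''\in{\bf obs}(\mathcal{A}_{k-1}(\mathcal{S}))$: any $(k-1)$-apex sub-unicyclic set of $G''$ augmented by $\{v\}$ is a $k$-apex sub-unicyclic set of $G$, contradicting $G\notin\mathcal{A}_{k}(\mathcal{S})$; and for any proper minor $H$ of $G$, a $k$-apex sub-unicyclic set $S$ of $H$ must contain $v$ (otherwise the intact butterfly $Z$ in $H$ contributes two cycles), so $S\setminus\{v\}$ restricted to $V(G'')$ is a $(k-1)$-apex sub-unicyclic set of the corresponding minor of $G''$. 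The inductive hypothesis gives $G''\in\mathcal{Z}_{k}$; since $w$ lies in only one block of $G''$ it is non-central there, so attaching $Z$ at $w$ is a valid construction step, placing $G\in\mathcal{Z}_{k+1}$.

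The main obstacle will be the structural characterization in the converse direction: establishing that every block of $G$ is a triangle (non-leaf blocks need more than \autoref{segments}) and that a peripheral leaf-block's neighboring triangle is attached to the rest of $G$ through a single vertex. Both rely on combining \autoref{segments} with the cycle-hitting capacity of vertices in a cactus — if some vertex lay in three or more blocks, or some block had length $\geq 4$, then a suitable minor operation would either preserve non-membership in $\mathcal{A}_{k}(\mathcal{S})$ (breaking minor-minimality) or directly yield a $k$-apex sub-unicyclic set for $G$ itself. A secondary subtlety is verifying that the gluing vertex $w$ is non-central in $G''$; this follows from the bound that each vertex of an obstruction lies in at most two blocks, so after peeling $Z$ the vertex $w$ is in only one block of $G''$ and thus cannot be central in $\mathcal{Z}_{k}$.
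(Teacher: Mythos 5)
Your proposal runs aground on a single structural claim that you use in both directions: that every vertex of a graph in ${\cal Z}_{k+1}$ (equivalently, of a connected cactus obstruction) lies in at most two blocks. This is false. The recursive definition of ${\cal Z}_{k}$ attaches each new butterfly at \emph{any} non-central vertex, and an identification vertex remains non-central, so it can be reused: the graph consisting of $k+1$ butterflies all sharing one extremal vertex $w$ lies in ${\cal Z}_{k+1}$ (it is exactly the ``trivial butterfly bucket'' case treated in \autoref{ruthless}), and there $w$ lies in $k+1\geq 3$ triangles. Consequently (i) your forward-direction argument that any $S$ with $|S|\leq k$ ``destroys the cycles of at most $2k$ blocks'' has a false premise — a single vertex can destroy many block-cycles — so non-membership of $G$ in ${\cal A}_{k}({\cal S})$ is not established (the paper instead proves it by induction via the uniqueness of the $k$-apex forest set, \autoref{furrowed} inside \autoref{supports}); and (ii) in the converse, the asserted ``hitting-set count'' forcing ``exactly $2(k+1)$ blocks with every vertex in at most two of them'' cannot exist. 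With it collapse your stated justifications that the cut-vertex $v$ of a peripheral leaf-block lies in exactly two blocks (true, but this is precisely \autoref{typifies} and needs its own minimality argument) and that the gluing vertex $w$ is non-central in $G''$ ($w$ may lie in several blocks of $G''$; non-centrality needs a separate argument, e.g.\ that if $w\in K(G'')$ then $K(G'')$ would already be a $k$-apex sub-unicyclic set of $G$).

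There is a second genuine gap in the minimality half of your converse induction. You claim that a $k$-apex sub-unicyclic set $S$ of a proper minor $H$ ``must contain $v$ (otherwise the intact butterfly $Z$ in $H$ contributes two cycles)''; this is not so, since $S$ may instead contain one vertex from each of the two triangles of $Z$ while avoiding $v$. What is true is that $S$ can be \emph{modified} to contain $v$ by exchanging a vertex of the far triangle for the central vertex, but checking that the exchange preserves the sub-unicyclic property is exactly the delicate point: the swapped-out vertex may be the attachment $w$, or the far triangle may be the unique surviving cycle, and then naive replacement fails. This is why the paper peels an entire butterfly bucket at once, first showing in \autoref{straight} that the attachment lies in no leaf-block, and then carries out the careful two-case exchange in \autoref{animates} (with the trivial-bucket case handled separately via $(k+2)K_{3}$, \autoref{machines}). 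Your single-butterfly peeling is plausibly repairable along these lines, but as written both the ``not $(k-1)$-apex'' direction rests on structure you have not established and the ``every proper minor is $(k-1)$-apex'' direction rests on a false dichotomy, so the induction does not yet go through.
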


The following lemma proves one direction of \autoref{holiness}.

\begin{lemma}
\label{supports}
If $G\in {\cal Z}_{k+1}, k\geq 0$, then $G \in {\bf obs}({\cal A}_{k}({\cal S}))$. 
\end{lemma}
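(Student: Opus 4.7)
The plan is to verify both requirements of being a minor-obstruction: (i) $G \notin {\cal A}_{k}({\cal S})$, and (ii) every proper minor of $G$ lies in ${\cal A}_{k}({\cal S})$. Throughout I will rely on the fact that $G \in {\cal Z}_{k+1}$ is a cactus whose cycles are exactly the $2(k+1)$ triangles coming from the $k+1$ butterflies in the construction, and that by \autoref{furrowed} the set $K(G)$ of $k+1$ central vertices is the unique $(k+1)$-apex forest set of $G$.

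For (i), I argue by contradiction: suppose there is $S \subseteq V(G)$ with $|S| \leq k$ such that $G \setminus S$ is sub-unicyclic. Since $|S| \leq k < k+1 = |K(G)|$ and $K(G)$ is the \emph{unique} $(k+1)$-apex forest set, $G \setminus S$ cannot be a forest. Being sub-unicyclic, $G \setminus S$ therefore contains exactly one cycle $T$, which is necessarily a triangle because every block of $G$ is a triangle. Then for each vertex $v \in V(T)$, the set $S \cup \{v\}$ has size at most $k+1$ and $G \setminus (S \cup \{v\})$ is a forest; the uniqueness in \autoref{furrowed} forces $S \cup \{v\} = K(G)$, so $v$ is the unique element of $K(G) \setminus S$. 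Since $|V(T)| = 3$, this cannot hold for three distinct vertices, a contradiction.

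For (ii), since ${\cal A}_{k}({\cal S})$ is minor-closed and $G$ is connected, it suffices to show that $G \setminus e, G / e \in {\cal A}_{k}({\cal S})$ for every edge $e \in E(G)$. I will fix such an edge $e$, let $B_{e}$ be the unique butterfly of the construction containing the triangle in which $e$ lies, let $c_{B_{e}}$ be its central vertex, and propose the witness set $S_{e} := K(G) \setminus \{c_{B_{e}}\}$, which has size $k$. Removing $S_{e}$ deletes the central vertex of every butterfly except $B_{e}$, collapsing each such butterfly into two disjoint edges, so all cycles of $G \setminus S_{e}$ lie in $B_{e}$ and there are exactly two of them (the two triangles of $B_{e}$). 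Deleting or contracting $e$ destroys the triangle of $B_{e}$ that contains $e$ while leaving the other triangle intact, so each of $(G \setminus e) \setminus S_{e}$ and $(G / e) \setminus S_{e}$ contains exactly one cycle and is sub-unicyclic, as desired.

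The main obstacle is (i), because \autoref{furrowed} only pins down apex \emph{forest} sets, whereas sub-unicyclicity allows one surviving cycle. The bridge is the observation that any such surviving cycle must be a triangle, which provides three candidate extensions of $S$ into a $(k+1)$-apex forest set; since $K(G) \setminus S$ has only one element, these three extensions cannot all equal $K(G)$, which yields the contradiction.
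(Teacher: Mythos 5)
Your proof is correct, and it takes a genuinely different route from the paper's. The paper proves the lemma by induction on $k$, peeling off the last-attached butterfly $Z$ from $G \in {\cal Z}_{k+1}$ and using the inductive hypothesis that the remaining $G' \in {\cal Z}_k$ lies in ${\bf obs}({\cal A}_{k-1}({\cal S}))$; both the non-membership claim and minor-minimality are then argued by case analysis on how $S$ or the edge $e$ interacts with the new copy of $Z$. You instead give a direct, non-inductive argument that leans entirely on \autoref{furrowed} (uniqueness of $K(G)$ as the $(k+1)$-apex forest set) and the cactus structure of $G$. For non-membership, your ``three candidate extensions'' trick is nice: any purported $k$-apex sub-unicyclic set $S$ would leave exactly one triangle $T$, and each of the three choices of $v \in V(T)$ turns $S \cup \{v\}$ into a $(k+1)$-apex forest set, hence into $K(G)$ by uniqueness, which pins $K(G)\setminus S$ to a single vertex --- impossible for three distinct $v$'s. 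For minor-minimality, you exhibit a uniform witness $S_e = K(G)\setminus\{c_{B_e}\}$: removing it leaves only the two triangles of the butterfly $B_e$ containing $e$, and deleting or contracting $e$ kills one of them (here one does need the small observation that contracting an edge of a triangle in a cactus destroys that cycle and creates no new ones, which you implicitly use and which is true). What your approach buys is a shorter, more symmetric argument that treats all butterflies on equal footing and concentrates all the inductive work in \autoref{furrowed}; what the paper's buys is a structure that mirrors the recursive definition of ${\cal Z}_{k+1}$, which makes the next steps of the characterization feel more natural. Both are sound.
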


\begin{proof}
We proceed by induction on $k$. The lemma clearly holds for $k = 0$. Let $G\in {\cal Z}_{k+1}$
for some $k\geq 1$ and assume that the lemma holds for smaller values of $k$.
We argue that $G$ is not $k$-apex sub-unicyclic while all its proper minors are. 
By the construction of $G$, we know that $G$ is the result of the identification
of an extremal vertex of a new copy of the butterfly graph $Z$ and a non-central vertex of some
graph $G'\in {\cal Z}_{k}$. By the induction hypothesis, we have that $G'\in \cactobs{k-1}$.
Let $C$ (resp. $C'$) be the cycle of the new copy of $Z$ in $G$ that is (resp. is not) a leaf-block of $G$.
\medskip

\noindent {\em Claim 1}: $G$ is not $k$-apex sub-unicyclic. \smallskip

\noindent{\em Proof of Claim 1}: Suppose, towards a contradiction, that $G$ is $k$-apex sub-unicyclic and therefore there exists some $k$-apex sub-unicyclic set $S$ of $G$.
\smallskip

\noindent{\em Case 1:} $S\cap V(C)\neq \emptyset$. We set $S'=S \cap V(G')$.
Then $|S'|\leq k-1$ and we observe that $G'\setminus S'$ is sub-unicyclic contradicting the fact that $G'\in \cactobs{k-1}$.\smallskip

\noindent{\em Case 2:} $S\cap V(C)= \emptyset$. Then $S$ is a $k$-apex forest set of $G\setminus V(C)$ that should contain at least one vertex of $C'$. This means that $G'$ contains a $k$-apex forest set that is different from $K(G')$, a contradiction to \autoref{furrowed}.\medskip

\noindent {\em Claim 2}: Every proper minor of $G$ is $k$-apex sub-unicyclic.
\smallskip

\noindent{\em Proof of Claim 2}: 
Consider a minor $H$ of $G$ created by the  contraction (or removal) of some edge $e$ of $G$.
If $e$ is an edge of the copy of $Z$ in $G$, then observe that $K(G')$ is a $k$-apex sub-unicyclic set of $H$ and so the claim is proven. 
Suppose now that $e$ is an edge of $G'$ in $G$ and  let $H'$ be the minor of $G'$ created after contracting (or removing) $e$ in $G'$. 
Since $G'\in \cactobs{k-1}$, there exists a $(k-1)$-apex sub-unicyclic set $S'$ of $H'$. But then $S'$, together 
with the central vertex of $Z$, form a $k$-apex sub-unicyclic set of $H$, as required.
\medskip

From the above two claims, we conclude that $G \in {\bf obs}({\cal A}_{k}({\cal S}))$.  
\end{proof}

The following is a direct consequence of the application of~\autoref{segments} on cacti.
\begin{observation}
\label{conceive}
Let $G \in \cactobs{k} \cap {\cal K},  k\geq 0$.  Then all blocks of $G$ are triangles.
\end{observation}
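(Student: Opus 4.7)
The plan is to derive the observation as a direct consequence of \autoref{segments} combined with the cactus structure of $G$. First I would use \autoref{segments}(2) to eliminate edge-blocks: since $G$ is a cactus, every block is either a single edge or a cycle, and in a cactus an edge-block is necessarily a bridge, so no block can be an edge, and every block must be a cycle of length at least $3$.

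Next I would invoke \autoref{segments}(3) to rule out cycles of length at least $4$. The key remark is that in a cactus with no bridges, a vertex has degree $2$ in $G$ if and only if it belongs to a unique block (whereas every cut-vertex must have degree at least $4$, contributing two edges per block). Suppose for contradiction that some block $B$ is a cycle $C_r$ with $r \geq 4$, and let $v \in V(B)$ be a non-cut-vertex of $G$. Then $\deg_G(v) = 2$ and the two neighbors of $v$ are its cycle-neighbors in $B$. Since $r \geq 4$ they are at cyclic distance two and hence non-adjacent in $B$, and by the cactus property any edge between them in $G$ would have to lie in a block containing both, which can only be $B$ itself. So the neighbors of $v$ are non-adjacent in $G$, contradicting \autoref{segments}(3).

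The main obstacle is guaranteeing the existence of a non-cut-vertex inside $B$ when $B$ is not a leaf of the block-cut-vertex tree $T_G$. For leaf-blocks this is immediate, since such a block contains exactly one cut-vertex and hence at least $r - 1 \geq 3$ non-cut-vertices; so leaf-blocks of length at least $4$ are already ruled out by the argument above. To extend the conclusion to non-leaf blocks in which every vertex happens to be a cut-vertex, I would supplement the argument with minor-minimality: contracting an edge of $B$ produces a proper minor $G'$ of $G$ whose block structure differs from that of $G$ only in that $B$ is replaced by $C_{r-1}$; by minor-minimality $G' \in {\cal A}_k({\cal S})$, and the cactus setting provides a bijection between the cycles of $G$ and those of $G'$, which allows any $k$-apex sub-unicyclic set of $G'$ to be lifted to one of $G$, yielding the desired contradiction.
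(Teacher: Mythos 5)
Your first two paragraphs coincide with the argument the paper intends: \autoref{segments}(2) rules out edge-blocks (they would be bridges), and \autoref{segments}(3) rules out any cycle-block of length at least $4$ that contains a vertex of degree $2$, since in a cactus the two $B$-neighbours of such a vertex cannot be adjacent in $G$ (two distinct blocks share at most one vertex). This is essentially all the paper says --- it presents the observation as a direct consequence of \autoref{segments} applied to cacti --- so you have correctly isolated the one configuration that this short argument does not literally cover, namely a cycle-block of length at least $4$ all of whose vertices are cut-vertices.

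However, the patch you propose for that case has a genuine gap. The bijection between the cycles of $G$ and of $G'=G/e$ is correct, but it does not by itself let you lift a $k$-apex sub-unicyclic set $S'$ of $G'$ back to $G$. If $v_e\notin S'$, then $S'$ itself is a $k$-apex sub-unicyclic set of $G$ (every cycle of $G$ avoiding $S'$ maps injectively to a cycle of $G'$ avoiding $S'$), and you get the contradiction. But if $v_e\in S'$, the natural lifts $(S'\setminus\{v_e\})\cup\{u\}$ and $(S'\setminus\{v_e\})\cup\{v\}$ may both fail: in $G'$ the single vertex $v_e$ simultaneously meets the cycle $B/e$, the cycles of the blocks attached at $u$, and the cycles of the blocks attached at $v$, whereas in $G$ no single vertex need meet both a cycle passing through $u$ only and a cycle passing through $v$ only; un-contracting can therefore resurrect cycles at the endpoint you did not put into the set. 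In your remaining case every vertex of $B$ is a cut-vertex carrying further cycle-blocks, which is exactly the situation where this happens, so the assertion that ``any $k$-apex sub-unicyclic set of $G'$ can be lifted to one of $G$'' is not a consequence of the cycle bijection and needs an actual argument --- for instance a vertex-exchange argument in the spirit of the paper's \autoref{typifies} and \autoref{solution}, handling the subcase $v_e\in S'$ explicitly.
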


\begin{lemma}
\label{typifies}
Let $k\geq 1$, $G$ be a connected cactus graph in $\cactobs{k}$ and let $B$ be some peripheral block of $G$. Then the (unique) neighbour $c$ of $B$  in $T_{G}$ has  degree 2.
\end{lemma}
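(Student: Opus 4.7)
The plan is to proceed by contradiction: suppose the cut-vertex $c$ of the peripheral leaf-block $B$ has degree at least $3$ in $T_G$, and derive a $k$-apex sub-unicyclic set of $G$, contradicting $G\in\cactobs{k}$. First, I would exploit the peripherality of $B$ via a standard tree-eccentricity argument: let $B'$ be the other endpoint of a diametral pair with $B$ and let $D=d_{T_G}(B,B')$. If $X$ is a block-neighbor of $c$ in $T_G$ that is neither $B$ nor the block of $c$ on the path to $B'$, then for any leaf-block $Y$ of $T_G$ in the subtree of $T_G\setminus c$ rooted at $X$ one has $d_{T_G}(Y,B')=d_{T_G}(Y,c)+d_{T_G}(c,B')\ge 2+(D-1)>D$, which forces $Y=X$. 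So every such side-branch is itself a leaf-block, hence a triangle by \autoref{conceive}. Therefore, assuming $\deg_{T_G}(c)\ge 3$, there is a leaf-triangle $B_{1}\neq B$ at $c$, and $B\cup B_{1}$ is a butterfly subgraph of $G$ with central vertex $c$ and four extremals $p,q,u,v$.

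Next I would consider the proper minor $G':=G-\{u,v\}=G-(V(B_{1})\setminus\{c\})$. Since $G\in\cactobs{k}$, we have $G'\in\mathcal{A}_{k}(\mathcal{S})$; let $S$ be a minimum $k$-apex sub-unicyclic set of $G'$. I claim one may arrange $c\in S$. If $c\in S$ already we are done; if $c\notin S$ but $S$ meets $V(B)\setminus\{c\}$ at some vertex $z$, then the swap $S':=(S\setminus\{z\})\cup\{c\}$ keeps $|S'|=|S|$ and $G'-S'$ sub-unicyclic, because reintroducing the degree-$2$ vertex $z$ into $G'-S-\{c\}$ only creates a pendant (since one of $z$'s two $G$-neighbors, namely $c$, has just been removed), while removing $c$ can only destroy cycles. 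Once $c\in S$, in $G-S$ the triangle $B_{1}$ collapses to the isolated edge $uv$ (its central vertex $c$ is gone), so $G-S=(G'-S)\cup K_{2}$ remains sub-unicyclic, giving $G\in\mathcal{A}_{k}(\mathcal{S})$ and the desired contradiction.

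The main obstacle is the residual case $c\notin S$ and $S\cap V(B)=\emptyset$, which yields no immediate swap and in which $B$ is preserved in $G'-S$. Here $G'-S$ is sub-unicyclic and contains the triangle $B$, so $B$ must be its unique cycle, whence $G'-S-\{p,q\}$ is a forest. Setting $H:=G-\{p,q,u,v\}$, this translates to $(H-c)-S$ being a forest, i.e.\ $S$ is a $k$-apex forest set of $H-c$; equivalently $S\cup\{c\}$ is a $(k+1)$-apex forest set of $G$. To close the argument I would use the peripherality of $B$ once more: it implies that in $T_H$ the block $B_0$ (the unique block of $G$ at $c$ on the path to $B'$) is still at maximum eccentricity from the opposite leaf, so the subtree of $T_H$ rooted at $B_0$ contains a leaf-triangle whose unique hitting vertex in the minimum forest-apex set $S$ is a non-cut-vertex $y$ of $H$, i.e.\ a vertex belonging to exactly one triangle of $H$. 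Then the swap $S'':=(S\setminus\{y\})\cup\{c\}$ has size $\le k$, and $G-S''$ is obtained from the forest $G-(S\cup\{c\})$ by re-adding $y$ with at most two neighbors in a single component of that forest, creating at most one new cycle. Hence $G-S''$ is sub-unicyclic and $G\in\mathcal{A}_{k}(\mathcal{S})$, again a contradiction; so $\deg_{T_G}(c)=2$.
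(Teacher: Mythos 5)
Your first two paragraphs are sound: the eccentricity argument correctly shows that every block-neighbour of $c$ in $T_G$ other than the one towards $B'$ is a leaf-triangle, and the swap disposing of the cases $c\in S$ or $S\cap (V(B)\setminus\{c\})\neq\emptyset$ is valid. The genuine gap is in the residual case. Everything there hinges on the assertion that, because $B$ is peripheral, the minimum $k$-apex sub-unicyclic set $S$ of $G'=G\setminus\{u,v\}$ must hit some leaf-triangle of $H$ in a single \emph{non-cut} vertex $y$ (a degree-$2$ vertex), so that exchanging $y$ for $c$ creates at most one new cycle. No justification is given for this, and it is not true in general: since $B$ is the unique cycle of $G'\setminus S$, the set $S$ must meet every triangle of $H$, and a minimum such set will typically consist of cut vertices — for instance the central vertices of butterflies, which by \autoref{furrowed} are forced whenever the triangles pair up into butterflies. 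In that situation every leaf-triangle of $H$ is hit at its attachment, no $y\in S$ has degree $2$, and re-inserting a degree-$4$ vertex $y$ into the forest $G\setminus(S\cup\{c\})$ can create two cycles, so $G\setminus S''$ need not be sub-unicyclic. Peripherality of $B$ constrains the shape of $T_G$ near $c$, but it says nothing about which vertices a minimum apex set of $G'$ uses, so the final contradiction is not established.

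The source of the difficulty is your choice of proper minor. Deleting the two extremal vertices of the leaf-triangle $B_1$ destroys one of the two leaf-triangles at $c$, which is exactly what allows $S$ to ignore the neighbourhood of $c$ altogether and spend its one permitted cycle on $B$. The paper's proof avoids this by deleting instead an edge $e$ of the unique non-leaf block $B'$ at $c$ (a triangle, by \autoref{conceive}). Then \emph{both} leaf-triangles at $c$ survive in the minor, so if $c\notin S$ then $S$ must meet one of them, and replacing all such vertices by $c$ is always safe because every cycle through them passes through $c$; and once $c\in S$, the same set $S$ works for $G$ itself, since the only cycle of $G$ through $e$ is the triangle $B'$, which contains $c$. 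If you wish to keep the vertex-deletion minor, you need a genuinely new argument for the residual case; as written, the proof is incomplete.
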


\begin{proof} 
Notice that $T_{G}$ has diameter at least $3$ since otherwise $G$ has a unique cut-vertex 
that is an 1-apex forest set, and therefore also an 1-apex sub-unicyclic set,  of $G$,
contradicting the fact that $G\notin{\cal A}_{1}({\cal S})$.
Suppose, towards a contradiction, that $c$ has degree at least three in $T_{G}$. Since $T_{G}$ has
diameter at least $3$ and $B$ is a peripheral leaf, there is exactly one neighbour, say $B'$, 
of $c$ in $T_{G}$ that is not a leaf-block of $G$. Let $e \in E(B')$ be some edge of said neighbour. Since $G\in \cactobs{k}$, we have that $G' = G\setminus e$ contains a $k$-apex sub-unicyclic set $S$. 
If $c\not\in S$, $S$ must contain at least one vertex from a leaf-block of $G$ that contains $c$. This follows
from the assumption that $c$ has at least two neighbours in $T_{G}$ which are leaf-blocks of $G$. But then 
the set $S'$ which is constructed by replacing these vertices with $c$ is also a $k$-apex 
sub-unicyclic set of $G'$. Therefore, we can assume that $c\in S$. But then $S$ is also an $k$-apex sub-unicyclic set 
for $G$, as $c\in V(B')$, a contradiction.
\end{proof}

\begin{lemma}
\label{solution}
Let $k\geq 1$ and $G$ be a connected cactus graph in $\cactobs{k}$. Let also $B$ be a peripheral block of $G$.
Then $T_{G}$ contains a path of length 3 whose one endpoint is $B$
and its internal vertices are of degree 2.
\end{lemma}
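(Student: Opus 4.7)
The plan is to show that the block $B''$ adjacent to $c$ (where $c$ is the neighbor of $B$ in $T_{G}$) also has degree $2$ in $T_{G}$; the desired length-$3$ path is then $B-c-B''-c'$, where $c'$ is the other cut-vertex of $B''$. The neighbor $c$ has degree $2$ by \autoref{typifies}, and the degenerate case where $T_{G}$ has diameter $2$ would force $G\cong Z$, contradicting $G\in\cactobs{k}$ for $k\geq 1$; so $B''$ exists and has degree at least $2$ in $T_{G}$. By \autoref{conceive}, $B''$ is a triangle, hence its degree in $T_{G}$ is at most $3$.

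It remains to rule out $B''$ having degree $3$, in which case all three vertices of $B''$ are cut-vertices $c,c_{1},c_{2}$. Here I would first extract more structure from peripherality. Let $\rho_{i}$ be the distance in $T_{G}$ from $c_{i}$ to the farthest leaf inside the component of $T_{G}\setminus B''$ containing $c_{i}$ (for $i=1,2$). Two leaves of $T_{G}$ lying in these two components have distance $\rho_{i}+\rho_{j}+2$ through $B''$, while anti-diametricity of $(B,B')$ yields ${\rm diam}(T_{G})=3+\max\{\rho_{1},\rho_{2}\}$; combining these, $\rho_{i}+\rho_{j}+2\leq 3+\max\{\rho_{1},\rho_{2}\}$ forces the smaller of $\rho_{1},\rho_{2}$ (say $\rho_{2}$) to equal $1$. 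Hence $c_{2}$ is adjacent to some leaf-block $L$, and a distance computation shows that the distance in $T_{G}$ from $L$ to $B'$ equals the diameter, so $L$ is itself peripheral. Applying \autoref{typifies} to $L$, the cut-vertex $c_{2}$ has degree $2$ in $T_{G}$, i.e.~it is adjacent only to $B''$ and $L$.

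The contradiction is then obtained by minor-minimality applied to the edge $e=c_{1}c_{2}$ of $B''$: since $G\in\cactobs{k}$, there is a $k$-apex sub-unicyclic set $S$ of $G\setminus e$ with $|S|\leq k$. The cycles of $G\setminus e$ are $B$, $L$, and the triangles lying in the subtree of $G$ through $c_{1}$, and $S$ misses at most one of them. In $(G\setminus e)\setminus S$ the vertices $c_{1}$ and $c_{2}$ are joined only through $c$, so if any of $c,c_{1},c_{2}$ lies in $S$ then adding back $e$ creates no new cycle, $G\setminus S$ is already sub-unicyclic, and $S$ witnesses $G\in{\cal A}_{k}({\cal S})$, a contradiction. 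Otherwise $c,c_{1},c_{2}\notin S$; if $(G\setminus e)\setminus S$ is acyclic then $G\setminus S$ contains only the new triangle $B''$, again contradicting $G\notin{\cal A}_{k}({\cal S})$. Finally, if a single cycle $C$ survives in $(G\setminus e)\setminus S$, a one- or two-element swap replacing vertices of $B$ or $L$ in $S$ by $c$ or $c_{2}$ produces a set $S'$ with $|S'|\leq |S|$ such that $G\setminus S'$ is sub-unicyclic: specifically, if $C=B$ swap some $a'\in S\cap V(L)$ for $c$; if $C=L$ swap some $a\in S\cap V(B)$ for $c_{2}$; and if $C$ lies in the $c_{1}$-subtree perform both swaps together. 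The main obstacle is the bookkeeping in this final case analysis: one must verify that in each subcase removing $c$ (and/or $c_{2}$) destroys the triangles $B,B''$ (resp.~$L,B''$), while the vertices released from $S$ re-create at most one cycle among $\{B,L\}$, so that $G\setminus S'$ retains only a single cycle — yielding the required contradiction with $G\notin{\cal A}_{k}({\cal S})$.
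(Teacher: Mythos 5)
Your proposal is correct and follows essentially the same route as the paper: assume the block beyond $c$ has three cut-vertices, use peripherality of $B$ to locate leaf-triangles hanging off it, delete an edge of that middle triangle, and repair the resulting $k$-apex sub-unicyclic set by swapping leaf-triangle vertices for the adjacent cut-vertices. The paper's execution is leaner — it deletes an arbitrary edge of the middle triangle and performs the swap up front so that the set already contains $c$ or $c''$, which makes the contradiction immediate and avoids your second application of \autoref{typifies} and the three-way case analysis on the surviving cycle; also, a small slip in your peripherality step: when $\rho_1=\rho_2=1$ the partner block $B'$ may itself lie in the $c_2$-component, so $d(L,B')$ need not equal the diameter, but $L$ is still peripheral via the pair $(L,B)$, so your argument goes through.
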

\begin{proof}
Let $c$ be the unique neighbour of $B$ in $T_{G}$.
By \autoref{typifies}, there exists a unique block $B'$ of $G$, different from $B$, that is a 
neighbour of $c$ in $T_{G}$. Observe that it suffices to prove that $B'$ has degree $2$ in $T_{G}$.

Suppose, towards a contradiction, that the block $B'$ has 3 neighbours $c,c',c''$ in $T_{G}$. Since $B$ is a  
peripheral leaf, we have that at least one of $c', c''$, say $c''$, is such that all its neighbours in $T_{G}$, except for $B'$, are 
leaf-blocks. Let $B''$ be a neighbour of $c''$ in $T_{G}$ different than $B'$. Consider now some edge 
$e \in E(B')$. Since $G\in \cactobs{k}$, we have that $G' = G\setminus e$ must contain a  $k$-apex 
sub-unicyclic set $S$. We can assume that $S$ contains one of $c,c''$. Indeed, we have
 that $S$ contains a vertex $x\in V(B)\cup V(B'')$. If $x\in V(B)$ then 
the set $S'=(S\setminus \{x\})\cup \{c\}$ is a $k$-apex sub-unicyclic set of $G'$. Respectively, if 
$x\in V(B'')$ then the set $S'=(S\setminus \{x\})\cup \{c''\}$ is a $k$-apex sub-unicyclic set of $G'$.
Assume then that $S$ is a $k$-apex sub-unicyclic set of $G'$ such that either $c$ or $c''$ is in $S$.
Then, $S$ is also a $k$-apex sub-unicyclic set of $G$ since both $c$ and $c''$ are vertices of $B'$, a contradiction.
\end{proof}

\noindent Given a graph $G$ we say that  a subgraph $Q$ of $G$ is a {\em leaf-butterfly of $G$}
if 
\begin{itemize}
	\item $Q$ is an induced subgraph of $G$,
	\item $Q$ is isomorphic to a butterfly graph,
	\item all the vertices of $Q$, except from an extremal one, called the {\em  attachment} of $Q$, have all their neighbours  inside $Q$ in  $G$, and
	\item the block of $Q$ that does not contain its attachment is a peripheral block of $G$.
\end{itemize}

A {\em butterfly bucket} of $G$ is a maximal collection ${\cal Q}=\{Q_{1},\ldots,Q_{r}\}$ of  leaf-butterflies of $G$ with the same attachment $w$ in $G$. If $G= \cupall{\cal Q}$ then we say that ${\cal Q}$ is a {\em trivial butterfly bucket}, otherwise we say that ${\cal Q}$ is a {\em non-trivial butterfly bucket}.
We call $w$ the {\em attachment} of ${\cal Q}$ in $G$.

By considering \autoref{solution} and \autoref{conceive} together, we have the following corollary:
\begin{corollary}
\label{barracks}
Let $k\geq1$, and let $G$ be a connected cactus graph in $\cactobs{k}$.
Then $G$ contains a butterfly bucket.
\end{corollary}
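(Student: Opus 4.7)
The plan is to use Lemma~\ref{solution} and Observation~\ref{conceive} in tandem to exhibit a single leaf-butterfly of $G$; once that is in hand, the maximal collection of leaf-butterflies of $G$ sharing its attachment is automatically a butterfly bucket.

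First I would invoke Lemma~\ref{solution} with $B$ a peripheral block of $G$ (one exists, since $k\geq 1$ forces $T_G$ to have at least two leaves). This gives a path $B - c - B' - c'$ in $T_G$ of length $3$ such that both the internal cut-vertex $c$ and the internal block $B'$ have degree $2$ in $T_G$. By Observation~\ref{conceive}, every block of $G$ is a triangle, so in particular $B$ and $B'$ are triangles. Since $c$ has degree $2$ in $T_G$, its two incident blocks are exactly $B$ and $B'$; and since $B'$ has degree $2$ in $T_G$, its two incident cut-vertices are exactly $c$ and $c'$. Writing $B = \{c,u_1,u_2\}$ and $B' = \{c,c',v\}$, where $u_1,u_2$ are the non-cut-vertices of $B$ and $v$ is the non-cut-vertex of $B'$, we get five distinct vertices forming two triangles glued at $c$.

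Next I would verify that the induced subgraph $Q := G[V(B)\cup V(B')]$ is a leaf-butterfly of $G$ with attachment $c'$. Being the union of two triangles sharing only the vertex $c$, it is isomorphic to $Z$ with central vertex $c$ and extremal vertices $u_1,u_2,c',v$; the induced condition is automatic from cactus block structure. For the neighborhood condition: $u_1,u_2$ are non-cut-vertices of the leaf-block $B$, so $N_G(u_i)\subseteq V(B)\subseteq V(Q)$; similarly $v$ is a non-cut-vertex of $B'$, so $N_G(v)\subseteq V(B')\subseteq V(Q)$; and $c$ lies in exactly the blocks $B$ and $B'$ (as $\deg_{T_G}(c)=2$), so $N_G(c)\subseteq V(B)\cup V(B')= V(Q)$. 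Hence all vertices of $Q$ other than $c'$ have their entire $G$-neighborhood inside $Q$, and the block of $Q$ not containing the would-be attachment $c'$ is $B$, which is peripheral by choice. All four conditions in the definition of a leaf-butterfly are met, so $Q$ is a leaf-butterfly of $G$ with attachment $c'$.

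Finally, letting $\mathcal{Q}$ be the maximal collection of leaf-butterflies of $G$ with attachment $c'$, the set $\mathcal{Q}$ is nonempty (it contains $Q$) and satisfies the definition of a butterfly bucket of $G$ by construction. There is no real obstacle here; the only step requiring care is the verification that each of $u_1,u_2,v,c$ has all its $G$-neighbors inside $Q$, and this is exactly where the degree-$2$-in-$T_G$ conclusion of Lemma~\ref{solution} combined with the triangle conclusion of Observation~\ref{conceive} pays off.
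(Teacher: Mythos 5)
Your proposal is correct and follows exactly the route the paper intends: the paper derives \autoref{barracks} by combining \autoref{solution} (the length-$3$ path in $T_G$ with degree-$2$ internal vertices) with \autoref{conceive} (all blocks are triangles), which is precisely the leaf-butterfly construction you carry out in detail. Your write-up merely makes explicit the verification of the four leaf-butterfly conditions that the paper leaves to the reader.
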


\begin{lemma}
\label{straight}
Let $k\geq 1$ and let ${\cal Q}$ be a non-trivial butterfly
bucket of a connected cactus graph $G$. If $G \in \cactobs{k}$
then there is no leaf-block of $G$ containing the attachment of ${\cal Q}$.
\end{lemma}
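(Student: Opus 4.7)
The plan is to argue by contradiction. Suppose $B$ is a leaf-block of $G$ containing the attachment $w$ of ${\cal Q}$; by \autoref{conceive}, every block of $G$ is a triangle, so $V(B)=\{w,a,b\}$ for some $a,b$ and we may fix the edge $e=ab\in E(B)$. Since $G\in\cactobs{k}$, the proper minor $G/e$ lies in ${\cal A}_{k}({\cal S})$; let $S\subseteq V(G/e)$ be a witnessing apex set, $|S|\le k$. The vertex $v_{e}$ obtained by contracting $e$ is a leaf of $G/e$ (with unique neighbour $w$), so we may assume $v_{e}\notin S$, equivalently $a,b\notin S$ as a subset of $V(G)$. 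The overall goal is to convert $S$ into a $k$-apex sub-unicyclic set for $G$ itself, contradicting $G\notin{\cal A}_{k}({\cal S})$.

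The key observation is that the cycles of $G\setminus S$ coincide with those of $(G/e)\setminus S$, together with the single extra cycle $wab$ exactly when $w\notin S$. Three cases arise. If $w\in S$, then $a,b$ form an isolated edge in $G\setminus S$ and the cycle counts agree, so $G\setminus S$ is sub-unicyclic, a contradiction. If $w\notin S$ and $(G/e)\setminus S$ is a forest, then $G\setminus S$ has the sole cycle $wab$ and the same contradiction arises. Hence we reduce to the critical case: $w\notin S$ and $(G/e)\setminus S$ has exactly one cycle $C'$, so $G\setminus S$ has two cycles $C'$ and $wab$.

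To close the critical case I plan to exploit the butterfly structure. Writing ${\cal Q}=\{Q_{1},\dots,Q_{r}\}$ with central vertices $c_{1},\dots,c_{r}$, a cycle-counting argument using that $(G/e)\setminus S$ has at most one cycle and that each $Q_{i}$ contributes two cycles pairwise sharing $c_{i}$ and $w$ (with $w\notin S$) shows that, up to replacing $S$ with an equally-sized canonical choice, $S\cap V(Q_{i})=\{c_{i}\}$ for every $i$. A further sub-case analysis, together with the fact that $C'\subseteq Q_{i}$ would force $R':=R\setminus\{a,b\}$ (where $R$ is the subgraph of $G$ outside the bucket) to be $(k-r)$-apex forest and hence $G\in{\cal A}_{k}({\cal S})$, shows that $C'$ must lie entirely in $R'$. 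Now consider the swap $S^{*}=(S\setminus\{c_{j}\})\cup\{w\}$ for some $j$: the butterflies $Q_{i}$ with $i\ne j$ become forests, $Q_{j}$ contributes the single cycle $c_{j}a_{j}b_{j}$, $B$ becomes an isolated edge, and $wab$ is destroyed. If $C'$ passes through $w$, the removal of $w$ also destroys $C'$, and $G\setminus S^{*}$ is sub-unicyclic, contradicting $G\in\cactobs{k}$.

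The main obstacle is the remaining sub-case in which $C'\subseteq R'\setminus\{w\}$. Here the swap above leaves $C'$ intact and the straightforward argument fails. To handle this I would use that the non-triviality of ${\cal Q}$ together with $G\notin{\cal A}_{k}({\cal S})$ imposes a rigid local structure on $R'$ around $w$: in particular, $R'\setminus\{w\}$ cannot be $(k-r)$-apex forest, for otherwise a different choice of apex-set (taking $\{c_{1},\dots,c_{r-1},w\}$ together with such a forest-apex) already witnesses $G\in{\cal A}_{k}({\cal S})$. One exploits this rigidity by locating a vertex $v\in S\cap V(R')$ whose unique cycle-block in $R'$ passes through $w$, so that swapping $v$ for $w$ introduces no new cycle (the block is broken by removing $w$) while still destroying $wab$. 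Verifying that such a $v$ exists and that the ensuing set of size at most $k$ indeed leaves only the cycle $C'$ as the sole survivor is the most delicate step, resting on the interplay between the cactus geometry of $R'$, the butterfly-bucket structure, and the minor-minimality of $G$ in $\cactobs{k}$.
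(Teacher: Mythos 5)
Your setup is fine up to the critical case, but the proof is not complete there, and the missing step is the whole difficulty. After contracting the edge $e=ab$ of $B$ and normalizing the apex set to $S=\{c_{1},\dots,c_{r}\}\cup F$ with $w\notin S$, you are left with the case where the unique surviving cycle $C'$ of $(G/e)\setminus S$ lies outside the bucket and avoids $w$. Your plan there rests on the existence of a vertex $v\in F$ all of whose cycles pass through $w$, and nothing you have established implies this. Concretely, the only properties of $G$ you ever use are $G\notin{\cal A}_{k}({\cal S})$, $G/e\in{\cal A}_{k}({\cal S})$, and the bucket/leaf-block structure at $w$; these are all satisfied by the cactus obtained from $r$ leaf-butterflies and the triangle $B$ at $w$ by attaching $k-r+1$ pairwise disjoint triangles to $w$ via pendant edges. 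There $F$ consists of one vertex from each of $k-r$ of those triangles, every $v\in F$ lies on a private cycle avoiding $w$, and no swap of your kind (indeed no $k$-apex sub-unicyclic set of $G$ at all) exists. That graph is of course not in $\cactobs{k}$ --- it has bridges, so \autoref{segments} excludes it --- but your argument never looks at any minor of $G$ other than $G/e$, so it cannot tell this graph apart from an obstruction; finishing the case necessarily requires invoking further minors or minimality, which is exactly the part you leave open. (Your rigidity claim that $R'\setminus\{w\}$ is not $(k-r)$-apex forest is correct, but it does not produce the required $v$.)

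The paper avoids this entirely by choosing a different proper minor. Fix one $Q\in{\cal Q}$ with central vertex $c$ and cycles $A\ni w$ and $C$, delete an edge of $A$, and let $S$ be a $k$-apex sub-unicyclic set of the resulting graph. Since both $B$ and $C$ are still cycles there, $S$ must meet at least one of them: if $S\cap V(C)=\emptyset$, pick $x\in S\cap V(B)$ and replace it by $w$; otherwise pick $y\in S\cap V(C)$ and replace it by $c$. In either case the swapped-in vertex covers every cycle through the swapped-out vertex (these lie in the single block $B$, resp.\ $C$) as well as the cycle $A$ that reappears in $G$, so one obtains a $k$-apex sub-unicyclic set of $G$ itself --- a contradiction with no residual case. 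If you want to salvage your route, you would have to bring in deletion of an edge of some $A_{j}$ to handle the hard case, at which point you have essentially reconstructed the paper's argument.
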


\begin{proof}
Suppose to the contrary that there exists a leaf-block $B$ of $G$ containing the attachment $w$ of ${\cal Q}$. Let $Q\in{\cal Q}$, let $c$ be the central vertex of $Q$, and let $A$ and $C$ be the two cycles of $Q$ such that $w$ is a vertex of $A$. Let $e$ be an edge of $A$ and $G'=G\setminus e$.
As $G \in \cactobs{k}$, it follows that
$G'=G\setminus e$ contains a $k$-apex sub-unicyclic set $S$.
If $S\cap V(C)=\emptyset$ then there exists some $x\in S\cap V(B)$ and therefore $S'=(S\setminus \{x\})\cup\{w\}$ is a $k$-apex sub-unicyclic set of $G$, a contradiction.
If there exists some $y\in S\cap V(C)$ then $S'=(S\setminus \{y\})\cup\{c\}$ is a $k$-apex sub-unicyclic set of $G$, again a contradiction.
\end{proof}

\begin{lemma} Let $k\geq 1$, $G$ be a connected cactus graph in $\cactobs{k}$, and ${\cal Q}$ be a non-trivial butterfly bucket of $G$ with attachment $w$. Then the graph $G' = G \setminus (V(\cupall {\cal Q})\setminus \{w\})$  is a connected cactus in $\cactobs{k-r}$ where $r= \lvert {\cal Q} \lvert$.
\label{animates} 
\end{lemma}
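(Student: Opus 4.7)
I first observe that $G'$ is a cactus (as an induced subgraph of the cactus $G$) and is connected: each butterfly $Q_i \in {\cal Q}$ attaches to the rest of $G$ only at $w$, so any simple $V(G')$-path in $G$ cannot traverse $Q_i \setminus \{w\}$ (this would force revisiting $w$), and therefore lies inside $G'$. To prove $G' \notin \cal A_{k-r}(\cal S)$, I suppose for contradiction that some $S' \subseteq V(G')$ of size at most $k - r$ makes $G' \setminus S'$ sub-unicyclic, and write $c_i$ for the central vertex of $Q_i$. The set $S := S' \cup \{c_1, \ldots, c_r\}$ has size at most $k$, and $G \setminus S$ is obtained from the sub-unicyclic $G' \setminus S'$ by attaching at $w$ (or in isolation, if $w \in S'$) $r$ forests, each of the form $Q_i \setminus \{c_i\}$ (two disjoint edges). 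Attaching forests preserves sub-unicyclicity, so $S$ witnesses $G \in \cal A_k(\cal S)$, contradicting $G \in \cactobs{k}$.

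To verify that every proper minor of $G'$ lies in $\cal A_{k-r}(\cal S)$, it suffices to treat $G' \setminus e$ and $G' / e$ for each $e \in E(G')$. Fix such an $e$ and let $H_G$ denote the corresponding minor of $G$; since $G \in \cactobs{k}$, we have $H_G \in \cal A_k(\cal S)$, and so there is a $k$-apex sub-unicyclic set $S$ of $H_G$. My strategy is to modify $S$ into a $k$-apex sub-unicyclic set $\bar S$ of $H_G$ with $|\bar S| \leq |S|$ and $\{c_1, \ldots, c_r\} \subseteq \bar S$. Once this is established, $\bar S \cap V(G')$ has size at most $k - r$ (since no $c_i$ lies in $V(G')$), and because the corresponding $G'$-minor minus $\bar S \cap V(G')$ embeds into the sub-unicyclic graph $H_G \setminus \bar S$, it is itself sub-unicyclic; hence $\bar S \cap V(G')$ certifies the $G'$-minor to lie in $\cal A_{k-r}(\cal S)$.

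The modification proceeds in two stages. Stage~1: for each $i$ with $S_i := S \cap (V(Q_i) \setminus \{w\})$ nonempty but $c_i \notin S$, replace $S_i$ by $\{c_i\}$; this does not increase $|S|$, and since $c_i$ destroys both cycles of $Q_i$, it does not increase the cycle count of $H_G \setminus S$. After Stage~1, each $i$ satisfies either $c_i \in S$ or $S_i = \emptyset$. At most one index $j$ can have $S_j = \emptyset$ (two would produce two disjoint surviving cycles $C_i^2$), and for any such $j$ one must have $w \in S$ (else $C_j^1$ would be a second surviving cycle), with $C_j^2$ then the unique cycle of $H_G \setminus S$. Stage~2 swaps $w$ in $S$ for $c_j$, producing $\bar S$ with $\{c_1, \ldots, c_r\} \subseteq \bar S$ and $|\bar S| = |S|$. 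The main obstacle is verifying that Stage~2 preserves sub-unicyclicity of $H_G \setminus \bar S$: reinserting $w$ into the $G'$-part could, in principle, create new cycles through $w$. This is controlled by the cactus structure of $G'$ (two cycles through $w$ in a cactus share only $w$, and each lives in a distinct cycle block), by \autoref{straight} (no leaf-block of $G$ contains $w$, so the blocks of $G'$ incident to $w$ are structurally constrained), and by the minor-minimality of $G$ in $\cactobs{k}$; a careful case analysis on the cycle blocks of $G'$ incident to $w$ rules out any configuration in which the swap creates more than one new cycle, completing the argument.
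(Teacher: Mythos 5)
Your overall strategy coincides with the paper's: establish that adding the $r$ centers to a hypothetical $(k-r)$-apex sub-unicyclic set of $G'$ would contradict $G\in\cactobs{k}$, and, for proper minors, take a $k$-apex sub-unicyclic set $S$ of the corresponding minor of $G$ and modify it so that it contains all centers $c_1,\ldots,c_r$, the only delicate case being when the attachment $w$ must be swapped out of $S$. Your Stage~1, the observation that at most one index $j$ can have $S_j=\emptyset$ and that then $w\in S$ with $B_j$ the unique surviving cycle, and the final bookkeeping are all sound and mirror the paper's Case~1/Case~2 analysis.

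However, at exactly the crux of the argument — your Stage~2, i.e.\ verifying that replacing $w$ by $c_j$ does not create two cycles through $w$ — you do not give a proof: you assert that ``a careful case analysis on the cycle blocks of $G'$ incident to $w$ rules out any configuration in which the swap creates more than one new cycle,'' which is precisely the statement that needs to be established. The facts you invoke do not suffice as stated: that two cycles of a cactus through $w$ meet only in $w$ does not bound their number, and \autoref{straight} only says that no leaf-block of $G$ contains $w$; a priori $w$ could lie on two cycle blocks of $G'$, neither hit by $S\setminus\{w\}$, in which case reinserting $w$ produces two cycles and $\bar S$ fails to be a $k$-apex sub-unicyclic set. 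What the paper actually proves and uses is the stronger structural fact that $w$ is contained in exactly one block $B^{*}$ of $G'$, so that at most one cycle of $G'$ passes through $w$; this is derived not from \autoref{straight} alone but from the combination of the non-triviality and \emph{maximality} of the butterfly bucket with \autoref{straight} and \autoref{solution} (via \autoref{typifies}): a second block of $G'$ at $w$ would force, through the peripheral/anti-diametrical structure of $T_G$, another leaf-butterfly with attachment $w$ that should have belonged to ${\cal Q}$. Your proposal never identifies or proves this uniqueness statement (and ``minor-minimality of $G$'' is not the ingredient that supplies it), so the argument has a genuine gap at its central step.
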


\begin{proof}
Let ${\cal Q}=\{Q_{1},\ldots,Q_{r}\}$.
For $i\in[r]$, let $A_i$ and $B_i$ be the two cycles of $Q_i$ such that $w$ is a vertex of $A_i$. Recall that $V(A_i)\cap V(B_i)$ is a singleton consisting of the central vertex, say $c_i$, of $Q_i$.  
Observe that $G'$ is a connected cactus and $w$ is contained in exactly one, say $B^*$, of the blocks of $G'$. This follows from the non-triviality of the butterfly bucket ${\cal Q}$, \autoref{straight}, \autoref{solution}, and the definition of a butterfly bucket.
\begin{figure}[!h]
	\begin{center}
		\includegraphics[width=4cm]{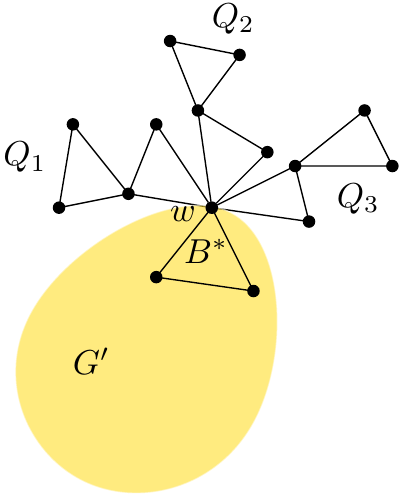}
	\end{center}
	\caption{An example of a graph $G$ and a butterfly bucket ${\cal Q}=\{Q_{1}, Q_{2}, Q_{3}\}$ of $G$ with attachment $w$.
		The graph $G'=G \setminus (V(\cupall {\cal Q})\setminus \{w\})$ is depicted in yellow and $B^{*}$ is the unique block of $G'$ that contains $w$.}
	\label{japanese}
\end{figure}

In what follows, we prove that $G'$  is a member of $\cactobs{k-r}$.\medskip

\noindent{\em Claim 1}: $G'$ is not $(k-r)$-apex sub-unicyclic.\smallskip

\noindent{\em Proof of Claim 1}: Suppose, to the contrary, that $S$ is a $(k-r)$-apex sub-unicyclic set of $G'$. Then $S\cup\{c_{1},\ldots,c_{r}\}$ is a $k$-apex sub-unicyclic set of $G$,
a contradiction as $G\in \cactobs{k}$.\medskip

\noindent {\em Claim 2}: Every proper  minor of $G'$ is $(k-r)$-apex sub-unicyclic.
\smallskip

\noindent{\em Proof of Claim 2}: 
Consider a minor $H'$ of $G'$ created by the contraction (or removal) of some edge $e$ of $G'$. Let  $H$ be the result of the contraction (or removal) of $e$ in $G$.
As $G\in \cactobs{k}$, there is a $k$-apex sub-unicyclic set $S$ in $H$.

We can assume that $\{c_{1},\dots,c_{r}\}\subseteq S$. Indeed, to see this is so, we can distinguish two cases:

\smallskip

\noindent{\em Case 1:} for every $i\in[r], \ S\cap V(B_{i})\neq\emptyset$. Then, for all $i\in[r]$ 
let $x_{i}\in S\cap V(B_{i})$ and observe that the set 
$S'=(S\setminus\{x_{1},\dots,x_{r}\})\cup\{c_{1},\dots c_{r}\}$ is a $k$-apex sub-unicyclic set of $G$.

\smallskip

\noindent{\em Case 2:} there is some  $i\in[r]$ such that $S\cap V(B_{i})=\emptyset$. Without loss of 
generality, we can assume that $i=1$. Then, the only cycle in $G\setminus S$
is $B_{1}$ and therefore for every $j\in[2,r],$ there exist some $x_{j}\in S\cap V(B_{j})$. Observe
that $S'=(S\setminus\{x_{2},\ldots,x_{r}\})\cup\{c_{2},\dots,c_{r}\}$ is a $k$-apex
sub-unicyclic set of $G$ (see \autoref{persuade}). As before, we have that there exists $x\in  S\cap V(A_{1})$. Set 
$S''=(S'\setminus\{x\})\cup\{c_{1}\}$. If $x\neq w$ then $S''$ is a $k$-apex
sub-unicyclic set of $G$. If $x=w$ then since $B_{i}$ is the only cycle in $G\setminus S'$ and 
$B^{*}$ is the only cycle in $G'$ that contains $w$, $S''$ is again a $k$-apex sub-unicycle set 
of $G$. 
\begin{figure}[!h]
	\begin{center}
		\includegraphics[width=4cm]{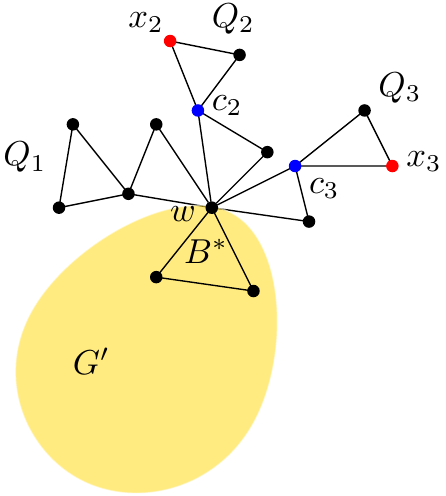}
	\end{center}
	\caption{Following the example in \autoref{japanese}, for every $i\in[2]$, $x_{i}$ is the vertex of $S$ that is in $V(B_{i})$ (depicted in red) and $c_{i}$ is the center of $Q_{i}$ (depicted in blue). The set $S'$ is obtained by replacing in $S$ the red vertices with the blue ones.}
	\label{persuade}
\end{figure}

\smallskip

Now, since $\{c_{1},\ldots,c_{r}\}\subseteq S$, we have that $S\setminus \{c_{1},\ldots,c_{r}\}$
is a $(k-r)$-apex sub-unicyclic set of $H'$ and so the claim follows.

\medskip

Based on the above two claims, we conclude that $G' \in \cactobs{k-r}$. 
\end{proof}

\begin{figure}
\centering
\scalebox{0.18}{

\definecolor{Gray}{gray}{0.9}
\newcolumntype{a}{>{\columncolor{Gray}}c}
\tikzset{every node/.style={black, shape=circle, fill=black, inner sep=0pt, minimum size=0.5cm}}

\begin{tabular}{ a@{\hskip 0.8in}  a@{\hskip 0.8in} a a}
\rule{10pt}{0pt}    
\begin{tikzpicture}[>=latex,line join=bevel,]
\node (node_8) at (71.017bp,-25.543bp) [draw,draw=none] {$8$};
  \node (node_7) at (51.317bp,-142.57bp) [draw,draw=none] {$7$};
  \node (node_6) at (-17.17bp,-142.48bp) [draw,draw=none] {$6$};
  \node (node_5) at (17.711bp,-74.81bp) [draw,draw=none] {$5$};
  \node (node_4) at (15.797bp,143.43bp) [draw,draw=none] {$4$};
  \node (node_3) at (-52.756bp,141.72bp) [draw,draw=none] {$3$};
  \node (node_2) at (-69.725bp,25.116bp) [draw,draw=none] {$2$};
  \node (node_1) at (0.8661bp,0.21434bp) [draw,draw=none] {$1$};
  \node (node_0) at (-17.058bp,74.93bp) [draw,draw=none] {$0$};
  \draw [black,] (node_0) ..controls (-27.454bp,94.38bp) and (-42.386bp,122.32bp)  .. (node_3);
  \draw [black,] (node_5) ..controls (27.498bp,-94.543bp) and (41.555bp,-122.89bp)  .. (node_7);
  \draw [black,] (node_0) ..controls (-33.151bp,59.709bp) and (-53.21bp,40.737bp)  .. (node_2);
  \draw [black,] (node_5) ..controls (34.108bp,-59.655bp) and (54.739bp,-40.588bp)  .. (node_8);
  \draw [black,] (node_6) ..controls (2.264bp,-142.51bp) and (32.281bp,-142.55bp)  .. (node_7);
  \draw [black,] (node_1) ..controls (20.772bp,-7.0944bp) and (51.518bp,-18.384bp)  .. (node_8);
  \draw [black,] (node_1) ..controls (-18.489bp,7.0421bp) and (-50.399bp,18.299bp)  .. (node_2);
  \draw [black,] (node_0) ..controls (-7.6759bp,94.49bp) and (6.1697bp,123.36bp)  .. (node_4);
  \draw [black,] (node_1) ..controls (5.5495bp,-20.645bp) and (12.988bp,-53.776bp)  .. (node_5);
  \draw [black,] (node_3) ..controls (-33.304bp,142.2bp) and (-3.2576bp,142.95bp)  .. (node_4);
  \draw [black,] (node_0) ..controls (-12.074bp,54.157bp) and (-4.1592bp,21.162bp)  .. (node_1);
  \draw [black,] (node_5) ..controls (7.553bp,-94.518bp) and (-7.0368bp,-122.82bp)  .. (node_6);
\end{tikzpicture}
&

\begin{tikzpicture}[>=latex,line join=bevel,]
\node (node_9) at (-84.938bp,-34.6bp) [draw,draw=none] {$9$};
  \node (node_8) at (38.272bp,-66.502bp) [draw,draw=none] {$8$};
  \node (node_7) at (158.97bp,-26.984bp) [draw,draw=none] {$7$};
  \node (node_6) at (137.2bp,-88.644bp) [draw,draw=none] {$6$};
  \node (node_5) at (82.838bp,-27.459bp) [draw,draw=none] {$5$};
  \node (node_4) at (49.267bp,149.54bp) [draw,draw=none] {$4$};
  \node (node_3) at (-15.071bp,158.78bp) [draw,draw=none] {$3$};
  \node (node_2) at (-42.197bp,57.899bp) [draw,draw=none] {$2$};
  \node (node_1) at (-1.2442bp,-2.8459bp) [draw,draw=none] {$1$};
  \node (node_0) at (12.285bp,82.585bp) [draw,draw=none] {$0$};
  \node (node_11) at (-161.56bp,-35.069bp) [draw,draw=none] {$11$};
  \node (node_10) at (-137.02bp,-96.271bp) [draw,draw=none] {$10$};
  \node (node_12) at (-36.807bp,-70.426bp) [draw,draw=none] {$12$};
  \draw [black,] (node_0) ..controls (4.6789bp,103.77bp) and (-7.4014bp,137.42bp)  .. (node_3);
  \draw [black,] (node_5) ..controls (103.42bp,-27.331bp) and (138.65bp,-27.111bp)  .. (node_7);
  \draw [black,] (node_0) ..controls (-4.4745bp,74.991bp) and (-25.56bp,65.437bp)  .. (node_2);
  \draw [black,] (node_9) ..controls (-70.186bp,-45.581bp) and (-55.105bp,-56.806bp)  .. (node_12);
  \draw [black,] (node_5) ..controls (68.127bp,-40.347bp) and (52.903bp,-53.684bp)  .. (node_8);
  \draw [black,] (node_6) ..controls (143.7bp,-70.239bp) and (152.39bp,-45.627bp)  .. (node_7);
  \draw [black,] (node_1) ..controls (10.63bp,-21.974bp) and (26.657bp,-47.791bp)  .. (node_8);
  \draw [black,] (node_1) ..controls (-13.468bp,15.285bp) and (-29.815bp,39.532bp)  .. (node_2);
  \draw [black,] (node_1) ..controls (-22.555bp,-10.932bp) and (-62.8bp,-26.201bp)  .. (node_9);
  \draw [black,] (node_0) ..controls (23.055bp,102.08bp) and (38.524bp,130.09bp)  .. (node_4);
  \draw [black,] (node_1) ..controls (20.166bp,-9.1133bp) and (60.598bp,-20.949bp)  .. (node_5);
  \draw [black,] (node_1) ..controls (-11.601bp,-22.527bp) and (-26.476bp,-50.794bp)  .. (node_12);
  \draw [black,] (node_3) ..controls (3.7928bp,156.07bp) and (31.134bp,152.14bp)  .. (node_4);
  \draw [black,] (node_0) ..controls (8.7065bp,59.99bp) and (2.3991bp,20.16bp)  .. (node_1);
  \draw [black,] (node_9) ..controls (-100.48bp,-53.008bp) and (-121.28bp,-77.624bp)  .. (node_10);
  \draw [black,] (node_10) ..controls (-144.35bp,-78.004bp) and (-154.14bp,-53.575bp)  .. (node_11);
  \draw [black,] (node_5) ..controls (99.065bp,-45.722bp) and (120.76bp,-70.144bp)  .. (node_6);
  \draw [black,] (node_9) ..controls (-104.43bp,-34.72bp) and (-137.1bp,-34.92bp)  .. (node_11);
\end{tikzpicture}

&
\begin{tikzpicture}[>=latex,line join=bevel,]
\node (node_14) at (109.16bp,131.21bp) [draw,draw=none] {$14$};
  \node (node_9) at (-82.219bp,-42.723bp) [draw,draw=none] {$9$};
  \node (node_8) at (65.112bp,-39.73bp) [draw,draw=none] {$8$};
  \node (node_7) at (110.74bp,-133.82bp) [draw,draw=none] {$7$};
  \node (node_13) at (66.956bp,66.184bp) [draw,draw=none] {$13$};
  \node (node_5) at (43.107bp,-86.687bp) [draw,draw=none] {$5$};
  \node (node_4) at (-58.28bp,156.96bp) [draw,draw=none] {$4$};
  \node (node_3) at (-111.2bp,123.56bp) [draw,draw=none] {$3$};
  \node (node_2) at (-56.719bp,33.18bp) [draw,draw=none] {$2$};
  \node (node_1) at (2.0301bp,-0.62883bp) [draw,draw=none] {$1$};
  \node (node_0) at (-39.924bp,81.171bp) [draw,draw=none] {$0$};
  \node (node_6) at (54.449bp,-163.96bp) [draw,draw=none] {$6$};
  \node (node_11) at (-159.98bp,-53.421bp) [draw,draw=none] {$11$};
  \node (node_16) at (70.788bp,16.672bp) [draw,draw=none] {$16$};
  \node (node_10) at (-129.93bp,-109.05bp) [draw,draw=none] {$10$};
  \node (node_15) at (149.42bp,82.451bp) [draw,draw=none] {$15$};
  \node (node_12) at (-33.499bp,-61.364bp) [draw,draw=none] {$12$};
  \draw [black,] (node_0) ..controls (-45.468bp,65.329bp) and (-51.205bp,48.935bp)  .. (node_2);
  \draw [black,] (node_9) ..controls (-67.183bp,-48.476bp) and (-51.652bp,-54.418bp)  .. (node_12);
  \draw [black,] (node_6) ..controls (71.25bp,-154.96bp) and (93.717bp,-142.93bp)  .. (node_7);
  \draw [black,] (node_1) ..controls (-15.624bp,9.5306bp) and (-39.45bp,23.242bp)  .. (node_2);
  \draw [black,] (node_10) ..controls (-139.45bp,-91.438bp) and (-150.72bp,-70.562bp)  .. (node_11);
  \draw [black,] (node_9) ..controls (-96.115bp,-62.04bp) and (-116.07bp,-89.784bp)  .. (node_10);
  \draw [black,] (node_0) ..controls (-59.468bp,92.795bp) and (-91.689bp,111.96bp)  .. (node_3);
  \draw [black,] (node_5) ..controls (62.297bp,-100.06bp) and (91.939bp,-120.72bp)  .. (node_7);
  \draw [black,] (node_14) ..controls (122.45bp,115.11bp) and (136.2bp,98.458bp)  .. (node_15);
  \draw [black,] (node_5) ..controls (50.324bp,-71.288bp) and (57.72bp,-55.505bp)  .. (node_8);
  \draw [black,] (node_13) ..controls (68.229bp,49.734bp) and (69.559bp,32.545bp)  .. (node_16);
  \draw [black,] (node_0) ..controls (-45.028bp,102.24bp) and (-53.133bp,135.71bp)  .. (node_4);
  \draw [black,] (node_13) ..controls (91.271bp,70.981bp) and (124.97bp,77.627bp)  .. (node_15);
  \draw [black,] (node_5) ..controls (46.217bp,-107.87bp) and (51.344bp,-142.8bp)  .. (node_6);
  \draw [black,] (node_1) ..controls (12.894bp,-23.39bp) and (32.045bp,-63.512bp)  .. (node_5);
  \draw [black,] (node_1) ..controls (-19.574bp,-11.423bp) and (-60.731bp,-31.987bp)  .. (node_9);
  \draw [black,] (node_1) ..controls (20.938bp,18.828bp) and (48.095bp,46.775bp)  .. (node_13);
  \draw [black,] (node_13) ..controls (79.639bp,85.724bp) and (96.758bp,112.1bp)  .. (node_14);
  \draw [black,] (node_0) ..controls (-28.585bp,59.062bp) and (-9.1715bp,21.211bp)  .. (node_1);
  \draw [black,] (node_3) ..controls (-94.924bp,133.84bp) and (-74.441bp,146.76bp)  .. (node_4);
  \draw [black,] (node_1) ..controls (20.401bp,-12.016bp) and (46.787bp,-28.371bp)  .. (node_8);
  \draw [black,] (node_1) ..controls (-8.9718bp,-19.436bp) and (-22.943bp,-43.319bp)  .. (node_12);
  \draw [black,] (node_1) ..controls (20.743bp,4.0795bp) and (48.086bp,10.959bp)  .. (node_16);
  \draw [black,] (node_9) ..controls (-102.07bp,-45.454bp) and (-135.51bp,-50.054bp)  .. (node_11);
\end{tikzpicture}

&

\begin{tikzpicture}[>=latex,line join=bevel,]
\node (node_13) at (125.03bp,94.416bp) [draw,draw=none] {$13$};
  \node (node_14) at (144.4bp,168.55bp) [draw,draw=none] {$14$};
  \node (node_9) at (-142.71bp,-73.951bp) [draw,draw=none] {$9$};
  \node (node_8) at (75.865bp,28.486bp) [draw,draw=none] {$8$};
  \node (node_7) at (135.61bp,-93.854bp) [draw,draw=none] {$7$};
  \node (node_6) at (87.096bp,-125.79bp) [draw,draw=none] {$6$};
  \node (node_5) at (68.674bp,-47.133bp) [draw,draw=none] {$5$};
  \node (node_4) at (-83.594bp,119.74bp) [draw,draw=none] {$4$};
  \node (node_3) at (-131.39bp,86.758bp) [draw,draw=none] {$3$};
  \node (node_2) at (-72.094bp,-32.237bp) [draw,draw=none] {$2$};
  \node (node_1) at (2.2916bp,-1.6038bp) [draw,draw=none] {$1$};
  \node (node_0) at (-64.756bp,41.738bp) [draw,draw=none] {$0$};
  \node (node_11) at (-218.56bp,-81.467bp) [draw,draw=none] {$11$};
  \node (node_16) at (150.43bp,33.575bp) [draw,draw=none] {$16$};
  \node (node_10) at (-187.0bp,-139.8bp) [draw,draw=none] {$10$};
  \node (node_15) at (196.69bp,128.86bp) [draw,draw=none] {$15$};
  \node (node_12) at (-85.979bp,-106.29bp) [draw,draw=none] {$12$};
  \draw [black,] (node_0) ..controls (-66.796bp,21.17bp) and (-70.037bp,-11.497bp)  .. (node_2);
  \draw [black,] (node_3) ..controls (-115.93bp,97.426bp) and (-98.819bp,109.24bp)  .. (node_4);
  \draw [black,] (node_6) ..controls (102.79bp,-115.46bp) and (120.16bp,-104.03bp)  .. (node_7);
  \draw [black,] (node_1) ..controls (-17.675bp,-9.8265bp) and (-51.554bp,-23.779bp)  .. (node_2);
  \draw [black,] (node_10) ..controls (-196.71bp,-121.86bp) and (-208.92bp,-99.28bp)  .. (node_11);
  \draw [black,] (node_9) ..controls (-155.6bp,-93.128bp) and (-174.13bp,-120.67bp)  .. (node_10);
  \draw [black,] (node_0) ..controls (-83.536bp,54.425bp) and (-112.28bp,73.847bp)  .. (node_3);
  \draw [black,] (node_5) ..controls (87.537bp,-60.3bp) and (116.41bp,-80.455bp)  .. (node_7);
  \draw [black,] (node_14) ..controls (162.66bp,154.69bp) and (178.16bp,142.92bp)  .. (node_15);
  \draw [black,] (node_2) ..controls (-75.954bp,-52.826bp) and (-82.086bp,-85.528bp)  .. (node_12);
  \draw [black,] (node_5) ..controls (70.674bp,-26.109bp) and (73.849bp,7.285bp)  .. (node_8);
  \draw [black,] (node_13) ..controls (132.61bp,76.256bp) and (142.75bp,51.971bp)  .. (node_16);
  \draw [black,] (node_0) ..controls (-69.921bp,63.126bp) and (-78.437bp,98.387bp)  .. (node_4);
  \draw [black,] (node_9) ..controls (-126.57bp,-83.147bp) and (-106.4bp,-94.646bp)  .. (node_12);
  \draw [black,] (node_13) ..controls (147.17bp,105.06bp) and (173.83bp,117.87bp)  .. (node_15);
  \draw [black,] (node_5) ..controls (73.725bp,-68.7bp) and (82.053bp,-104.26bp)  .. (node_6);
  \draw [black,] (node_8) ..controls (95.184bp,29.805bp) and (126.49bp,31.941bp)  .. (node_16);
  \draw [black,] (node_1) ..controls (21.0bp,-14.435bp) and (49.637bp,-34.076bp)  .. (node_5);
  \draw [black,] (node_2) ..controls (-91.454bp,-43.675bp) and (-123.37bp,-62.531bp)  .. (node_9);
  \draw [black,] (node_13) ..controls (130.41bp,115.03bp) and (138.97bp,147.77bp)  .. (node_14);
  \draw [black,] (node_0) ..controls (-45.861bp,29.523bp) and (-16.937bp,10.826bp)  .. (node_1);
  \draw [black,] (node_8) ..controls (90.183bp,47.686bp) and (110.75bp,75.264bp)  .. (node_13);
  \draw [black,] (node_1) ..controls (22.465bp,6.6465bp) and (55.723bp,20.248bp)  .. (node_8);
  \draw [black,] (node_9) ..controls (-162.36bp,-75.898bp) and (-194.21bp,-79.054bp)  .. (node_11);
\end{tikzpicture}

\\
&
\begin{tikzpicture}[>=latex,line join=bevel,]
\node (node_9) at (98.045bp,-53.079bp) [draw,draw=none] {$9$};
  \node (node_8) at (-107.94bp,35.888bp) [draw,draw=none] {$8$};
  \node (node_7) at (-147.18bp,-96.649bp) [draw,draw=none] {$7$};
  \node (node_6) at (-184.59bp,-41.778bp) [draw,draw=none] {$6$};
  \node (node_5) at (-106.67bp,-32.147bp) [draw,draw=none] {$5$};
  \node (node_4) at (45.679bp,143.21bp) [draw,draw=none] {$4$};
  \node (node_3) at (-15.775bp,149.43bp) [draw,draw=none] {$3$};
  \node (node_2) at (39.151bp,2.0139bp) [draw,draw=none] {$2$};
  \node (node_1) at (-37.803bp,9.8378bp) [draw,draw=none] {$1$};
  \node (node_0) at (7.5638bp,73.404bp) [draw,draw=none] {$0$};
  \node (node_11) at (124.18bp,-124.64bp) [draw,draw=none] {$11$};
  \node (node_10) at (172.18bp,-78.755bp) [draw,draw=none] {$10$};
  \node (node_12) at (113.15bp,13.264bp) [draw,draw=none] {$12$};
  \draw [black,] (node_0) ..controls (1.0748bp,94.543bp) and (-9.2318bp,128.12bp)  .. (node_3);
  \draw [black,] (node_5) ..controls (-118.84bp,-51.529bp) and (-135.27bp,-77.689bp)  .. (node_7);
  \draw [black,] (node_0) ..controls (16.466bp,53.285bp) and (30.092bp,22.487bp)  .. (node_2);
  \draw [black,] (node_2) ..controls (58.323bp,4.9286bp) and (89.396bp,9.6524bp)  .. (node_12);
  \draw [black,] (node_5) ..controls (-107.03bp,-12.719bp) and (-107.57bp,15.952bp)  .. (node_8);
  \draw [black,] (node_6) ..controls (-172.82bp,-59.038bp) and (-159.0bp,-79.301bp)  .. (node_7);
  \draw [black,] (node_1) ..controls (-57.706bp,17.23bp) and (-88.448bp,28.648bp)  .. (node_8);
  \draw [black,] (node_1) ..controls (-17.45bp,7.7685bp) and (18.428bp,4.1209bp)  .. (node_2);
  \draw [black,] (node_2) ..controls (56.848bp,-14.541bp) and (80.733bp,-36.885bp)  .. (node_9);
  \draw [black,] (node_0) ..controls (18.523bp,93.474bp) and (34.843bp,123.36bp)  .. (node_4);
  \draw [black,] (node_1) ..controls (-57.343bp,-2.0755bp) and (-87.526bp,-20.477bp)  .. (node_5);
  \draw [black,] (node_3) ..controls (2.1215bp,147.62bp) and (27.827bp,145.02bp)  .. (node_4);
  \draw [black,] (node_9) ..controls (102.44bp,-33.759bp) and (108.76bp,-6.0089bp)  .. (node_12);
  \draw [black,] (node_0) ..controls (-6.0687bp,54.303bp) and (-24.468bp,28.522bp)  .. (node_1);
  \draw [black,] (node_9) ..controls (117.25bp,-59.732bp) and (148.38bp,-70.513bp)  .. (node_10);
  \draw [black,] (node_10) ..controls (155.92bp,-94.302bp) and (140.02bp,-109.5bp)  .. (node_11);
  \draw [black,] (node_5) ..controls (-127.28bp,-34.694bp) and (-163.6bp,-39.185bp)  .. (node_6);
  \draw [black,] (node_9) ..controls (105.41bp,-73.248bp) and (116.69bp,-104.12bp)  .. (node_11);
\end{tikzpicture}
&
\begin{tikzpicture}[>=latex,line join=bevel,]
\node (node_14) at (134.51bp,162.5bp) [draw,draw=none] {$14$};
  \node (node_9) at (-111.42bp,-28.919bp) [draw,draw=none] {$9$};
  \node (node_8) at (-15.25bp,-93.16bp) [draw,draw=none] {$8$};
  \node (node_7) at (91.421bp,-152.4bp) [draw,draw=none] {$7$};
  \node (node_13) at (102.64bp,92.406bp) [draw,draw=none] {$13$};
  \node (node_5) at (37.048bp,-97.384bp) [draw,draw=none] {$5$};
  \node (node_4) at (-38.752bp,138.8bp) [draw,draw=none] {$4$};
  \node (node_3) at (-88.939bp,111.81bp) [draw,draw=none] {$3$};
  \node (node_2) at (40.216bp,37.092bp) [draw,draw=none] {$2$};
  \node (node_1) at (-18.193bp,-21.865bp) [draw,draw=none] {$1$};
  \node (node_0) at (-26.753bp,58.777bp) [draw,draw=none] {$0$};
  \node (node_6) at (35.11bp,-181.23bp) [draw,draw=none] {$6$};
  \node (node_11) at (-188.25bp,-21.142bp) [draw,draw=none] {$11$};
  \node (node_16) at (114.89bp,29.244bp) [draw,draw=none] {$16$};
  \node (node_10) at (-174.0bp,-83.486bp) [draw,draw=none] {$10$};
  \node (node_15) at (179.15bp,114.28bp) [draw,draw=none] {$15$};
  \node (node_12) at (-73.428bp,-65.322bp) [draw,draw=none] {$12$};
  \draw [black,] (node_0) ..controls (-7.8801bp,52.665bp) and (21.01bp,43.311bp)  .. (node_2);
  \draw [black,] (node_9) ..controls (-98.318bp,-41.473bp) and (-87.43bp,-51.905bp)  .. (node_12);
  \draw [black,] (node_6) ..controls (51.918bp,-172.62bp) and (74.394bp,-161.11bp)  .. (node_7);
  \draw [black,] (node_1) ..controls (-0.64135bp,-4.1486bp) and (23.047bp,19.762bp)  .. (node_2);
  \draw [black,] (node_2) ..controls (59.562bp,35.059bp) and (90.917bp,31.763bp)  .. (node_16);
  \draw [black,] (node_10) ..controls (-178.25bp,-64.878bp) and (-183.94bp,-39.993bp)  .. (node_11);
  \draw [black,] (node_9) ..controls (-129.15bp,-44.379bp) and (-153.69bp,-65.773bp)  .. (node_10);
  \draw [black,] (node_0) ..controls (-44.863bp,74.22bp) and (-70.874bp,96.401bp)  .. (node_3);
  \draw [black,] (node_5) ..controls (53.773bp,-114.31bp) and (74.817bp,-135.6bp)  .. (node_7);
  \draw [black,] (node_14) ..controls (149.25bp,146.58bp) and (164.5bp,130.11bp)  .. (node_15);
  \draw [black,] (node_5) ..controls (20.489bp,-96.047bp) and (0.86367bp,-94.461bp)  .. (node_8);
  \draw [black,] (node_13) ..controls (106.32bp,73.426bp) and (111.29bp,47.81bp)  .. (node_16);
  \draw [black,] (node_0) ..controls (-29.974bp,80.257bp) and (-35.439bp,116.7bp)  .. (node_4);
  \draw [black,] (node_13) ..controls (126.18bp,99.134bp) and (155.79bp,107.6bp)  .. (node_15);
  \draw [black,] (node_5) ..controls (36.535bp,-119.56bp) and (35.632bp,-158.65bp)  .. (node_6);
  \draw [black,] (node_2) ..controls (58.026bp,52.872bp) and (82.906bp,74.918bp)  .. (node_13);
  \draw [black,] (node_1) ..controls (-2.8342bp,-42.862bp) and (21.56bp,-76.211bp)  .. (node_5);
  \draw [black,] (node_1) ..controls (-41.161bp,-23.603bp) and (-88.135bp,-27.157bp)  .. (node_9);
  \draw [black,] (node_13) ..controls (111.81bp,112.56bp) and (125.45bp,142.57bp)  .. (node_14);
  \draw [black,] (node_0) ..controls (-24.456bp,37.131bp) and (-20.557bp,0.40276bp)  .. (node_1);
  \draw [black,] (node_3) ..controls (-73.153bp,120.3bp) and (-54.619bp,130.27bp)  .. (node_4);
  \draw [black,] (node_1) ..controls (-17.364bp,-41.957bp) and (-16.094bp,-72.714bp)  .. (node_8);
  \draw [black,] (node_1) ..controls (-34.504bp,-34.698bp) and (-53.876bp,-49.939bp)  .. (node_12);
  \draw [black,] (node_9) ..controls (-131.03bp,-26.934bp) and (-164.07bp,-23.59bp)  .. (node_11);
\end{tikzpicture}

&

\begin{tikzpicture}[>=latex,line join=bevel,]
\node (node_13) at (176.12bp,101.6bp) [draw,draw=none] {$13$};
  \node (node_14) at (193.51bp,176.0bp) [draw,draw=none] {$14$};
  \node (node_9) at (-148.33bp,-110.03bp) [draw,draw=none] {$9$};
  \node (node_8) at (28.575bp,-51.877bp) [draw,draw=none] {$8$};
  \node (node_7) at (119.29bp,-26.186bp) [draw,draw=none] {$7$};
  \node (node_6) at (123.0bp,45.723bp) [draw,draw=none] {$6$};
  \node (node_5) at (52.105bp,12.911bp) [draw,draw=none] {$5$};
  \node (node_4) at (-122.61bp,101.05bp) [draw,draw=none] {$4$};
  \node (node_3) at (-167.77bp,62.536bp) [draw,draw=none] {$3$};
  \node (node_2) at (-92.462bp,-51.758bp) [draw,draw=none] {$2$};
  \node (node_1) at (-27.382bp,-5.5519bp) [draw,draw=none] {$1$};
  \node (node_0) at (-97.87bp,25.269bp) [draw,draw=none] {$0$};
  \node (node_11) at (-219.92bp,-134.08bp) [draw,draw=none] {$11$};
  \node (node_16) at (198.93bp,33.301bp) [draw,draw=none] {$16$};
  \node (node_10) at (-175.7bp,-184.25bp) [draw,draw=none] {$10$};
  \node (node_15) at (245.72bp,133.89bp) [draw,draw=none] {$15$};
  \node (node_12) at (-85.198bp,-128.54bp) [draw,draw=none] {$12$};
  \draw [black,] (node_0) ..controls (-96.387bp,4.1495bp) and (-93.942bp,-30.67bp)  .. (node_2);
  \draw [black,] (node_3) ..controls (-152.87bp,75.25bp) and (-137.44bp,88.408bp)  .. (node_4);
  \draw [black,] (node_6) ..controls (121.96bp,25.457bp) and (120.35bp,-5.5638bp)  .. (node_7);
  \draw [black,] (node_6) ..controls (142.67bp,42.504bp) and (174.56bp,37.288bp)  .. (node_16);
  \draw [black,] (node_1) ..controls (-45.723bp,-18.574bp) and (-73.798bp,-38.507bp)  .. (node_2);
  \draw [black,] (node_10) ..controls (-189.91bp,-168.13bp) and (-205.49bp,-150.45bp)  .. (node_11);
  \draw [black,] (node_9) ..controls (-155.94bp,-130.67bp) and (-168.03bp,-163.45bp)  .. (node_10);
  \draw [black,] (node_6) ..controls (139.23bp,62.795bp) and (159.46bp,84.076bp)  .. (node_13);
  \draw [black,] (node_0) ..controls (-117.7bp,35.844bp) and (-148.34bp,52.177bp)  .. (node_3);
  \draw [black,] (node_5) ..controls (71.039bp,1.8928bp) and (100.02bp,-14.974bp)  .. (node_7);
  \draw [black,] (node_14) ..controls (211.42bp,161.55bp) and (227.65bp,148.47bp)  .. (node_15);
  \draw [black,] (node_2) ..controls (-90.47bp,-72.809bp) and (-87.187bp,-107.52bp)  .. (node_12);
  \draw [black,] (node_5) ..controls (45.034bp,-6.5574bp) and (35.491bp,-32.833bp)  .. (node_8);
  \draw [black,] (node_13) ..controls (182.63bp,82.094bp) and (192.25bp,53.313bp)  .. (node_16);
  \draw [black,] (node_0) ..controls (-104.75bp,46.339bp) and (-115.68bp,79.805bp)  .. (node_4);
  \draw [black,] (node_9) ..controls (-130.7bp,-115.2bp) and (-106.75bp,-122.22bp)  .. (node_12);
  \draw [black,] (node_13) ..controls (198.01bp,111.75bp) and (223.71bp,123.68bp)  .. (node_15);
  \draw [black,] (node_5) ..controls (71.544bp,21.908bp) and (103.59bp,36.74bp)  .. (node_6);
  \draw [black,] (node_1) ..controls (-6.3589bp,-0.66871bp) and (30.7bp,7.9393bp)  .. (node_5);
  \draw [black,] (node_2) ..controls (-109.65bp,-69.684bp) and (-131.27bp,-92.238bp)  .. (node_9);
  \draw [black,] (node_13) ..controls (180.96bp,122.28bp) and (188.64bp,155.14bp)  .. (node_14);
  \draw [black,] (node_0) ..controls (-78.676bp,16.877bp) and (-47.315bp,3.1639bp)  .. (node_1);
  \draw [black,] (node_1) ..controls (-10.054bp,-19.897bp) and (11.95bp,-38.113bp)  .. (node_8);
  \draw [black,] (node_9) ..controls (-167.15bp,-116.35bp) and (-196.51bp,-126.22bp)  .. (node_11);
\end{tikzpicture}
\\
&
\begin{tikzpicture}[>=latex,line join=bevel,]
\node (node_9) at (-41.65bp,144.46bp) [draw,draw=none] {$9$};
  \node (node_8) at (84.24bp,-114.89bp) [draw,draw=none] {$8$};
  \node (node_7) at (43.918bp,-223.52bp) [draw,draw=none] {$7$};
  \node (node_6) at (-23.318bp,-213.3bp) [draw,draw=none] {$6$};
  \node (node_5) at (21.766bp,-151.19bp) [draw,draw=none] {$5$};
  \node (node_4) at (68.928bp,50.376bp) [draw,draw=none] {$4$};
  \node (node_3) at (2.0497bp,80.52bp) [draw,draw=none] {$3$};
  \node (node_2) at (-45.45bp,-44.12bp) [draw,draw=none] {$2$};
  \node (node_1) at (20.933bp,-73.993bp) [draw,draw=none] {$1$};
  \node (node_0) at (11.149bp,3.3006bp) [draw,draw=none] {$0$};
  \node (node_11) at (-61.179bp,218.12bp) [draw,draw=none] {$11$};
  \node (node_10) at (-112.34bp,174.4bp) [draw,draw=none] {$10$};
  \node (node_12) at (30.957bp,149.84bp) [draw,draw=none] {$12$};
  \draw [black,] (node_0) ..controls (8.6542bp,24.473bp) and (4.5408bp,59.379bp)  .. (node_3);
  \draw [black,] (node_5) ..controls (28.009bp,-171.58bp) and (37.565bp,-202.78bp)  .. (node_7);
  \draw [black,] (node_0) ..controls (-5.7447bp,-10.853bp) and (-28.336bp,-29.781bp)  .. (node_2);
  \draw [black,] (node_9) ..controls (-22.565bp,145.88bp) and (7.2097bp,148.08bp)  .. (node_12);
  \draw [black,] (node_3) ..controls (10.361bp,100.45bp) and (22.739bp,130.13bp)  .. (node_12);
  \draw [black,] (node_6) ..controls (-4.3697bp,-216.18bp) and (24.636bp,-220.59bp)  .. (node_7);
  \draw [black,] (node_5) ..controls (39.96bp,-140.62bp) and (66.091bp,-125.44bp)  .. (node_8);
  \draw [black,] (node_1) ..controls (39.369bp,-85.904bp) and (65.849bp,-103.01bp)  .. (node_8);
  \draw [black,] (node_3) ..controls (-11.082bp,99.735bp) and (-28.805bp,125.67bp)  .. (node_9);
  \draw [black,] (node_1) ..controls (2.2248bp,-65.574bp) and (-26.413bp,-52.687bp)  .. (node_2);
  \draw [black,] (node_0) ..controls (28.395bp,17.352bp) and (51.458bp,36.142bp)  .. (node_4);
  \draw [black,] (node_1) ..controls (21.161bp,-95.16bp) and (21.538bp,-130.06bp)  .. (node_5);
  \draw [black,] (node_3) ..controls (20.897bp,72.025bp) and (49.748bp,59.021bp)  .. (node_4);
  \draw [black,] (node_0) ..controls (13.832bp,-17.892bp) and (18.254bp,-52.832bp)  .. (node_1);
  \draw [black,] (node_9) ..controls (-60.232bp,152.33bp) and (-89.222bp,164.61bp)  .. (node_10);
  \draw [black,] (node_10) ..controls (-94.681bp,189.49bp) and (-78.516bp,203.3bp)  .. (node_11);
  \draw [black,] (node_5) ..controls (8.3094bp,-169.73bp) and (-9.6863bp,-194.52bp)  .. (node_6);
  \draw [black,] (node_9) ..controls (-47.08bp,164.94bp) and (-55.704bp,197.47bp)  .. (node_11);
\end{tikzpicture}

&

\begin{tikzpicture}[>=latex,line join=bevel,]
\node (node_14) at (67.787bp,240.38bp) [draw,draw=none] {$14$};
  \node (node_9) at (-119.27bp,-59.702bp) [draw,draw=none] {$9$};
  \node (node_8) at (-12.445bp,-124.76bp) [draw,draw=none] {$8$};
  \node (node_7) at (107.27bp,-154.15bp) [draw,draw=none] {$7$};
  \node (node_13) at (82.616bp,164.52bp) [draw,draw=none] {$13$};
  \node (node_5) at (42.237bp,-113.21bp) [draw,draw=none] {$5$};
  \node (node_4) at (-15.347bp,96.887bp) [draw,draw=none] {$4$};
  \node (node_3) at (54.34bp,90.862bp) [draw,draw=none] {$3$};
  \node (node_2) at (-48.772bp,9.7099bp) [draw,draw=none] {$2$};
  \node (node_1) at (-26.126bp,-54.493bp) [draw,draw=none] {$1$};
  \node (node_0) at (11.654bp,23.835bp) [draw,draw=none] {$0$};
  \node (node_6) at (58.412bp,-194.81bp) [draw,draw=none] {$6$};
  \node (node_11) at (-195.23bp,-44.273bp) [draw,draw=none] {$11$};
  \node (node_16) at (126.72bp,108.63bp) [draw,draw=none] {$16$};
  \node (node_10) at (-184.56bp,-107.9bp) [draw,draw=none] {$10$};
  \node (node_15) at (132.58bp,222.0bp) [draw,draw=none] {$15$};
  \node (node_12) at (-81.859bp,-103.53bp) [draw,draw=none] {$12$};
  \draw [black,] (node_0) ..controls (-5.8239bp,19.749bp) and (-30.699bp,13.935bp)  .. (node_2);
  \draw [black,] (node_9) ..controls (-106.36bp,-74.832bp) and (-94.537bp,-88.678bp)  .. (node_12);
  \draw [black,] (node_6) ..controls (74.215bp,-181.66bp) and (91.71bp,-167.1bp)  .. (node_7);
  \draw [black,] (node_1) ..controls (-32.931bp,-35.2bp) and (-42.116bp,-9.162bp)  .. (node_2);
  \draw [black,] (node_10) ..controls (-187.77bp,-88.78bp) and (-192.09bp,-62.975bp)  .. (node_11);
  \draw [black,] (node_9) ..controls (-136.91bp,-72.726bp) and (-162.45bp,-91.575bp)  .. (node_10);
  \draw [black,] (node_0) ..controls (24.085bp,43.354bp) and (41.94bp,71.391bp)  .. (node_3);
  \draw [black,] (node_5) ..controls (60.565bp,-124.75bp) and (88.621bp,-142.41bp)  .. (node_7);
  \draw [black,] (node_14) ..controls (88.959bp,234.37bp) and (111.62bp,227.95bp)  .. (node_15);
  \draw [black,] (node_5) ..controls (25.416bp,-116.76bp) and (4.2531bp,-121.23bp)  .. (node_8);
  \draw [black,] (node_13) ..controls (96.58bp,146.82bp) and (113.13bp,125.85bp)  .. (node_16);
  \draw [black,] (node_3) ..controls (62.202bp,111.34bp) and (74.689bp,143.87bp)  .. (node_13);
  \draw [black,] (node_0) ..controls (3.9925bp,44.564bp) and (-7.8416bp,76.582bp)  .. (node_4);
  \draw [black,] (node_13) ..controls (97.985bp,182.2bp) and (117.32bp,204.45bp)  .. (node_15);
  \draw [black,] (node_3) ..controls (73.364bp,95.532bp) and (103.04bp,102.82bp)  .. (node_16);
  \draw [black,] (node_5) ..controls (46.609bp,-135.27bp) and (54.093bp,-173.02bp)  .. (node_6);
  \draw [black,] (node_1) ..controls (-6.7278bp,-71.156bp) and (23.235bp,-96.892bp)  .. (node_5);
  \draw [black,] (node_1) ..controls (-49.075bp,-55.776bp) and (-96.009bp,-58.401bp)  .. (node_9);
  \draw [black,] (node_13) ..controls (78.493bp,185.61bp) and (71.944bp,219.11bp)  .. (node_14);
  \draw [black,] (node_0) ..controls (1.2952bp,2.3582bp) and (-15.783bp,-33.049bp)  .. (node_1);
  \draw [black,] (node_3) ..controls (34.566bp,92.572bp) and (4.0231bp,95.213bp)  .. (node_4);
  \draw [black,] (node_1) ..controls (-22.193bp,-74.695bp) and (-16.335bp,-104.78bp)  .. (node_8);
  \draw [black,] (node_1) ..controls (-42.813bp,-69.175bp) and (-63.024bp,-86.958bp)  .. (node_12);
  \draw [black,] (node_9) ..controls (-138.95bp,-55.704bp) and (-170.85bp,-49.225bp)  .. (node_11);
\end{tikzpicture}
&

\begin{tikzpicture}[>=latex,line join=bevel,]
\node (node_14) at (61.516bp,237.32bp) [draw,draw=none] {$14$};
  \node (node_9) at (44.495bp,-137.86bp) [draw,draw=none] {$9$};
  \node (node_8) at (-122.81bp,7.3852bp) [draw,draw=none] {$8$};
  \node (node_7) at (-193.57bp,-104.53bp) [draw,draw=none] {$7$};
  \node (node_13) at (77.92bp,161.87bp) [draw,draw=none] {$13$};
  \node (node_5) at (-135.9bp,-55.321bp) [draw,draw=none] {$5$};
  \node (node_4) at (-13.524bp,86.388bp) [draw,draw=none] {$4$};
  \node (node_3) at (53.189bp,85.738bp) [draw,draw=none] {$3$};
  \node (node_2) at (13.441bp,-61.683bp) [draw,draw=none] {$2$};
  \node (node_1) at (-57.351bp,-30.24bp) [draw,draw=none] {$1$};
  \node (node_0) at (7.4815bp,16.511bp) [draw,draw=none] {$0$};
  \node (node_6) at (-215.23bp,-42.673bp) [draw,draw=none] {$6$};
  \node (node_11) at (42.265bp,-213.86bp) [draw,draw=none] {$11$};
  \node (node_16) at (123.91bp,108.46bp) [draw,draw=none] {$16$};
  \node (node_10) at (103.74bp,-191.63bp) [draw,draw=none] {$10$};
  \node (node_15) at (126.53bp,221.24bp) [draw,draw=none] {$15$};
  \node (node_12) at (83.894bp,-87.106bp) [draw,draw=none] {$12$};
  \draw [black,] (node_0) ..controls (9.1155bp,-4.9287bp) and (11.81bp,-40.275bp)  .. (node_2);
  \draw [black,] (node_3) ..controls (34.388bp,85.921bp) and (5.608bp,86.202bp)  .. (node_4);
  \draw [black,] (node_6) ..controls (-208.77bp,-61.138bp) and (-200.12bp,-85.83bp)  .. (node_7);
  \draw [black,] (node_1) ..controls (-37.941bp,-38.861bp) and (-5.9398bp,-53.075bp)  .. (node_2);
  \draw [black,] (node_10) ..controls (83.579bp,-198.92bp) and (62.918bp,-206.39bp)  .. (node_11);
  \draw [black,] (node_9) ..controls (61.94bp,-153.69bp) and (84.836bp,-174.47bp)  .. (node_10);
  \draw [black,] (node_0) ..controls (20.623bp,36.415bp) and (40.194bp,66.056bp)  .. (node_3);
  \draw [black,] (node_5) ..controls (-153.11bp,-70.011bp) and (-176.13bp,-89.654bp)  .. (node_7);
  \draw [black,] (node_14) ..controls (82.761bp,232.06bp) and (105.5bp,226.44bp)  .. (node_15);
  \draw [black,] (node_2) ..controls (31.96bp,-68.365bp) and (60.851bp,-78.791bp)  .. (node_12);
  \draw [black,] (node_5) ..controls (-131.99bp,-36.605bp) and (-126.77bp,-11.575bp)  .. (node_8);
  \draw [black,] (node_13) ..controls (92.387bp,145.07bp) and (109.37bp,125.35bp)  .. (node_16);
  \draw [black,] (node_3) ..controls (60.065bp,106.91bp) and (70.986bp,140.53bp)  .. (node_13);
  \draw [black,] (node_0) ..controls (1.442bp,36.602bp) and (-7.5521bp,66.521bp)  .. (node_4);
  \draw [black,] (node_9) ..controls (57.155bp,-121.55bp) and (71.036bp,-103.67bp)  .. (node_12);
  \draw [black,] (node_13) ..controls (92.873bp,180.13bp) and (111.69bp,203.11bp)  .. (node_15);
  \draw [black,] (node_3) ..controls (71.779bp,91.71bp) and (100.78bp,101.03bp)  .. (node_16);
  \draw [black,] (node_5) ..controls (-156.88bp,-51.976bp) and (-193.87bp,-46.079bp)  .. (node_6);
  \draw [black,] (node_1) ..controls (-78.125bp,-36.874bp) and (-114.74bp,-48.567bp)  .. (node_5);
  \draw [black,] (node_2) ..controls (22.075bp,-82.863bp) and (35.789bp,-116.5bp)  .. (node_9);
  \draw [black,] (node_13) ..controls (73.359bp,182.85bp) and (66.115bp,216.17bp)  .. (node_14);
  \draw [black,] (node_0) ..controls (-11.527bp,2.8038bp) and (-39.078bp,-17.063bp)  .. (node_1);
  \draw [black,] (node_1) ..controls (-75.798bp,-19.637bp) and (-104.04bp,-3.4052bp)  .. (node_8);
  \draw [black,] (node_9) ..controls (43.875bp,-158.99bp) and (42.89bp,-192.55bp)  .. (node_11);
\end{tikzpicture}
\\ & & &
\begin{tikzpicture}[>=latex,line join=bevel,]
\node (node_13) at (217.77bp,-3.4243bp) [draw,draw=none] {$13$};
  \node (node_14) at (294.64bp,2.0063bp) [draw,draw=none] {$14$};
  \node (node_9) at (-218.59bp,51.385bp) [draw,draw=none] {$9$};
  \node (node_8) at (19.39bp,-101.46bp) [draw,draw=none] {$8$};
  \node (node_7) at (117.81bp,-105.59bp) [draw,draw=none] {$7$};
  \node (node_6) at (148.2bp,-38.432bp) [draw,draw=none] {$6$};
  \node (node_5) at (70.61bp,-47.206bp) [draw,draw=none] {$5$};
  \node (node_4) at (-79.995bp,84.279bp) [draw,draw=none] {$4$};
  \node (node_3) at (-141.48bp,47.17bp) [draw,draw=none] {$3$};
  \node (node_2) at (-72.056bp,-63.207bp) [draw,draw=none] {$2$};
  \node (node_1) at (-5.6609bp,-31.029bp) [draw,draw=none] {$1$};
  \node (node_0) at (-72.301bp,10.283bp) [draw,draw=none] {$0$};
  \node (node_11) at (-288.1bp,82.754bp) [draw,draw=none] {$11$};
  \node (node_16) at (214.49bp,-75.242bp) [draw,draw=none] {$16$};
  \node (node_10) at (-286.3bp,15.373bp) [draw,draw=none] {$10$};
  \node (node_15) at (261.35bp,59.629bp) [draw,draw=none] {$15$};
  \node (node_12) at (-179.78bp,112.71bp) [draw,draw=none] {$12$};
  \draw [black,] (node_0) ..controls (-72.232bp,-10.57bp) and (-72.124bp,-42.78bp)  .. (node_2);
  \draw [black,] (node_9) ..controls (-207.0bp,69.69bp) and (-191.51bp,94.168bp)  .. (node_12);
  \draw [black,] (node_6) ..controls (139.35bp,-57.991bp) and (126.64bp,-86.083bp)  .. (node_7);
  \draw [black,] (node_6) ..controls (166.12bp,-48.379bp) and (192.04bp,-62.774bp)  .. (node_16);
  \draw [black,] (node_1) ..controls (-24.372bp,-40.098bp) and (-53.015bp,-53.979bp)  .. (node_2);
  \draw [black,] (node_10) ..controls (-286.82bp,34.996bp) and (-287.57bp,63.18bp)  .. (node_11);
  \draw [black,] (node_9) ..controls (-237.02bp,41.584bp) and (-263.94bp,27.263bp)  .. (node_10);
  \draw [black,] (node_6) ..controls (167.14bp,-28.904bp) and (194.8bp,-14.983bp)  .. (node_13);
  \draw [black,] (node_0) ..controls (-91.933bp,20.75bp) and (-122.25bp,36.917bp)  .. (node_3);
  \draw [black,] (node_5) ..controls (85.129bp,-65.167bp) and (103.4bp,-87.764bp)  .. (node_7);
  \draw [black,] (node_14) ..controls (284.4bp,19.732bp) and (271.51bp,42.033bp)  .. (node_15);
  \draw [black,] (node_5) ..controls (54.392bp,-64.384bp) and (35.172bp,-84.741bp)  .. (node_8);
  \draw [black,] (node_13) ..controls (216.85bp,-23.664bp) and (215.43bp,-54.646bp)  .. (node_16);
  \draw [black,] (node_3) ..controls (-161.88bp,48.285bp) and (-197.83bp,50.25bp)  .. (node_9);
  \draw [black,] (node_0) ..controls (-74.441bp,30.857bp) and (-77.838bp,63.533bp)  .. (node_4);
  \draw [black,] (node_13) ..controls (230.87bp,15.523bp) and (248.54bp,41.095bp)  .. (node_15);
  \draw [black,] (node_5) ..controls (91.132bp,-44.886bp) and (127.31bp,-40.795bp)  .. (node_6);
  \draw [black,] (node_1) ..controls (14.954bp,-35.402bp) and (50.246bp,-42.887bp)  .. (node_5);
  \draw [black,] (node_3) ..controls (-152.56bp,66.127bp) and (-168.32bp,93.108bp)  .. (node_12);
  \draw [black,] (node_13) ..controls (240.99bp,-1.784bp) and (270.96bp,0.33342bp)  .. (node_14);
  \draw [black,] (node_0) ..controls (-53.521bp,-1.3596bp) and (-24.772bp,-19.182bp)  .. (node_1);
  \draw [black,] (node_3) ..controls (-123.58bp,57.977bp) and (-97.858bp,73.499bp)  .. (node_4);
  \draw [black,] (node_1) ..controls (1.5417bp,-51.278bp) and (12.268bp,-81.433bp)  .. (node_8);
  \draw [black,] (node_9) ..controls (-237.51bp,59.922bp) and (-265.15bp,72.397bp)  .. (node_11);
\end{tikzpicture}

\end{tabular}

}
\caption{The connected graphs in ${\bf obs}({\cal S}^{k})$ for $n = 1,2,3$ respectively (presented left to right).}
\label{rigorous}
\end{figure}
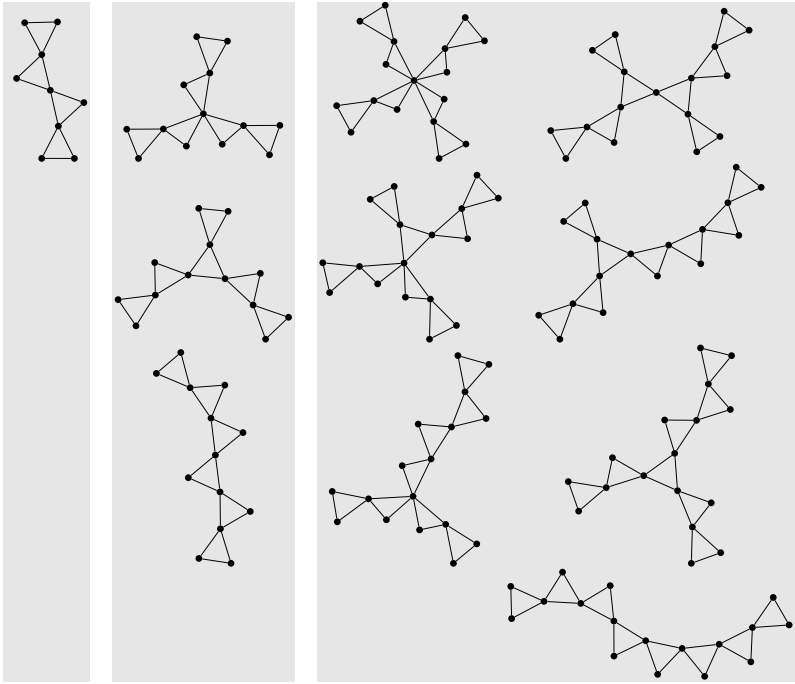

\begin{observation} \label{machines}
  For every $k\in\mathbb{N}$, $(k+2)K_{3}\in \cactobs{k}$.
\end{observation}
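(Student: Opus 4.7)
The plan is to verify the two defining properties of a minor-obstruction: that $(k+2)K_3$ is not $k$-apex sub-unicyclic, and that every proper minor of it is.

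First I would argue that $(k+2)K_3\notin {\cal A}_k({\cal S})$. Let $S\subseteq V((k+2)K_3)$ with $|S|\le k$. Since the $k+2$ triangles are vertex-disjoint, $S$ intersects at most $k$ of them, so at least two triangles of $(k+2)K_3$ are entirely disjoint from $S$. These two triangles survive in $(k+2)K_3\setminus S$ as two vertex-disjoint cycles, hence $(k+2)K_3\setminus S\notin {\cal S}$. Therefore no $k$-apex sub-unicyclic set exists.

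Next I would show that every proper minor $H$ of $(k+2)K_3$ lies in ${\cal A}_k({\cal S})$. Since minors are generated by edge-deletions and edge-contractions of subgraphs, it suffices to treat the two elementary operations. Let $T$ be the triangle of $(k+2)K_3$ containing the modified edge $e=xy$. If $e$ is removed, then $T\setminus e$ is a path of length two (acyclic), and the remaining $k+1$ triangles are unchanged. If $e$ is contracted, then $T/e$ becomes a single edge on two vertices (after suppressing the resulting multi-edge), again acyclic, with the other $k+1$ triangles untouched. In either case the resulting graph $H'$ on the modified component consists of a no-cycle piece together with $k+1$ vertex-disjoint triangles. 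Pick one vertex from each of $k$ of these $k+1$ triangles; this set $S$ has $|S|=k$, and $H'\setminus S$ consists of the acyclic piece, $k$ isolated edges (from the hit triangles), and one surviving triangle. Thus $H'\setminus S$ has at most one cycle, so $H'\in {\cal A}_k({\cal S})$. Any further edge-deletions that produce a smaller minor only destroy cycles, preserving membership in ${\cal A}_k({\cal S})$.

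There is no real obstacle here; the only small subtlety is the edge-contraction case, where one must remember that the paper works in the category of simple graphs so that the parallel edge created by contracting an edge of a triangle is absorbed. Combining the two items gives $(k+2)K_3\in {\bf obs}({\cal A}_k({\cal S}))$, as required.
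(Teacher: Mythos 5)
Your proof is correct; the paper states this only as an observation without proof, and your argument is the natural verification it implicitly relies on. The one point worth tightening is that proper minors also arise from vertex deletions, which you do not treat explicitly, but this is harmless here: $(k+2)K_{3}$ has no isolated vertices, so $G\setminus v$ is a minor of $G\setminus e$ for any edge $e$ incident to $v$, and ${\cal A}_{k}({\cal S})$ is minor-closed, so the edge-deletion case you checked already covers it.
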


\begin{lemma} 
\label{ruthless}
Let $k\geq 0$ and $G$ be a connected cactus graph in $\cactobs{k}$. Then $G\in {\cal Z}_{k+1}$.
\end{lemma}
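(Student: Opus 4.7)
The plan is to proceed by induction on $k$, leveraging \autoref{barracks} as the structural engine that decomposes $G$.

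For the base case $k=0$, the statement reduces to identifying the connected cactus graphs in ${\bf obs}({\cal S})=\{2K_{3},K_{4}^{-},Z\}$. Since $K_{4}^{-}$ is not a cactus and $2K_{3}$ is disconnected, the only candidate is $Z\in{\cal Z}_{1}$, which settles the case.

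For the inductive step with $k\geq 1$, I would invoke \autoref{barracks} to extract a butterfly bucket ${\cal Q}=\{Q_{1},\dots,Q_{r}\}$ with attachment $w$, and split on whether ${\cal Q}$ is trivial or not. In the trivial case, $G=\cupall{\cal Q}$, so $G$ is a union of $r$ butterflies glued at $w$; the set of centers $\{c_{1},\ldots,c_{r}\}$ is an $r$-apex forest set of $G$, which forces $r\geq k+1$ because $G\notin {\cal A}_{k}({\cal S})$. Conversely, the outer triangles form a proper minor $rK_{3}$ of $G$, so by \autoref{machines} and the minor-minimality of $G$ we must have $r-2<k$, hence $r\leq k+1$. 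Thus $r=k+1$, and iteratively identifying extremal vertices of butterflies with $w$ verifies $G\in{\cal Z}_{k+1}$.

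In the non-trivial case, I would apply \autoref{animates} to $G'=G\setminus(V(\cupall{\cal Q})\setminus\{w\})$, obtaining that $G'$ is a connected cactus in $\cactobs{k-r}$ with $k-r<k$, so the induction hypothesis gives $G'\in{\cal Z}_{k-r+1}$. The key additional observation needed is that $w$ is a \emph{non-central} vertex of $G'$: the proof of \autoref{animates} establishes that $w$ lies in exactly one block $B^{*}$ of $G'$, and since an easy induction on the recursive definition of ${\cal Z}_{j}$ shows that every element of $K(G')$ is a cut vertex of $G'$ (each center separates its two triangles), the vertex $w$ cannot be central. Finally, $G$ is obtained from $G'$ by identifying an extremal vertex of each $Q_{i}$ with $w$; applying the recursive step of ${\cal Z}$ exactly $r$ times yields $G\in{\cal Z}_{k-r+1+r}={\cal Z}_{k+1}$.

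The main obstacle I anticipate is the verification that $w$ is non-central in $G'$ in the non-trivial case, since this hinges on coupling the structural content of \autoref{animates} (unique block containing $w$) with the fact that centers in the ${\cal Z}_{j}$ construction are cut vertices; the two arguments must be aligned cleanly so that the recursive extension is valid at every one of the $r$ insertions.
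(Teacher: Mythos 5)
Your proof is correct and follows essentially the same route as the paper's: induction on $k$, extraction of a butterfly bucket via \autoref{barracks}, the trivial/non-trivial dichotomy with the $(k+2)K_3$ bound via \autoref{machines} for the trivial case, and \autoref{animates} plus the induction hypothesis for the non-trivial case. The one genuine addition is your explicit verification that the attachment $w$ is non-central in $G'$ (required by the recursive definition of ${\cal Z}_{k+1}$) — a point the paper's proof passes over silently — and your argument via ``centers are cut-vertices while $w$ lies in a single block of $G'$'' closes that gap cleanly.
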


\begin{proof}
We proceed by induction on $k$. The base case where $k=0$ is trivial.
Let  $G$ be a connected cactus graph in $\cactobs{k}$ for some $k\geq 1$  and assume that the statement of the lemma holds for smaller values of $k$.

Let ${\cal Q}$ be a butterfly bucket in $G$ that exists because of \autoref{barracks}.  
We first examine the case where  ${\cal Q}$ is trivial.
We claim that if ${\cal Q}$ is trivial, then $\lvert {\cal Q} \lvert =k+1$. Indeed, if $|{\cal Q}|\leq k$, then 
the central vertices of the leaf buckets of ${\cal Q}$ form a $k$-apex sub-unicyclic 
set, contradicting the fact that  $G\in \cactobs{k}$. Also,  if $\lvert {\cal Q} \rvert \geq  k+2$, then $(k+2)K_{3}$ is a minor of $G$, a contradiction as $(k+2)K_{3}\in \cactobs{k}$.
The triviality of ${\cal Q}$ and the fact that $ \lvert {\cal Q} \lvert =k+1$ then imply that $G\in {\cal Z}_{k+1}$.

Suppose now that ${\cal Q}$ is not trivial. By \autoref{animates}, $G' = G \setminus (V(\cupall {\cal Q})\setminus \{w\})$  is a connected cactus in $\cactobs{k-r}$ where $r=|{\cal Q}|$. Since, due to the induction hypothesis, we have $G'\in {\cal Z}_{k-r+1}$, it follows that  $G\in {\cal Z}_{k+1}$, as required.
\end{proof}

\begin{proof}[Proof of \autoref{holiness}]
The proof is an immediate consequence of \autoref{supports} and \autoref{ruthless}.
\end{proof}

\subsection{Characterization of disconnected cactus-obstructions}

The objective of this section is to prove the following theorem.

\begin{theorem}\label{selected}
Let $k\in\Bbb{N}$, let $G$ be a disconnected cactus graph in $\cactobs{k}$, and let $G_{1}, G_{2}, \ldots, G_{r}$ be the connected components of $G$.
Then, one of the following holds:
\begin{itemize}
	\item $G\cong(k+2)K_{3}$
	
	\item there is a sequence $k_{1}, k_{2}, \ldots, k_{r}$ such that for every $i\in [r]$, $G_{i}$ is a graph in ${\cal Z}_{k_i}$ and $\sum_{i\in [r]}k_{i}= k + 1$.
\end{itemize}

\end{theorem}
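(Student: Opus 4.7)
My plan is to proceed by induction on $k$, using \autoref{conceive} and \autoref{machines} together with \autoref{holiness}. The base case $k=0$ is immediate: ${\bf obs}({\cal S})=\{2K_3,K_4^-,Z\}$, and the only disconnected cactus obstruction is $2K_3=(0+2)K_3$.

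For the inductive step, let $G\in\cactobs{k}\cap{\cal K}$ be disconnected with components $G_1,\dots,G_r$. By \autoref{conceive}, every block of each $G_i$ is a triangle. Let $\mu(H)$ denote the maximum number of pairwise vertex-disjoint triangles in a graph $H$; then $\mu(G)=\sum_i\mu(G_i)$, and $(k+2)K_3\leq G$ as a subgraph iff $\mu(G)\geq k+2$. If $\mu(G)\geq k+2$ and $G\not\cong(k+2)K_3$, then $(k+2)K_3$ is a proper minor of $G$; since $(k+2)K_3\in\cactobs{k}$ by \autoref{machines}, this contradicts minor-minimality. Hence either $G\cong(k+2)K_3$ (giving the first alternative), or $\mu(G)\leq k+1$.

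In the remaining case, let $f(H)$ and $g(H)$ denote the minimum sizes of an apex sub-unicyclic set and an apex forest set of $H$; for disjoint unions one has $g(H_A\cup H_B)=g(H_A)+g(H_B)$ and $f(H_A\cup H_B)=\min\{g(H_A)+f(H_B),\,f(H_A)+g(H_B)\}$. An auxiliary identity, provable by induction on the number of triangles via peeling a peripheral triangle-block, gives $g(H)=\mu(H)$ for every connected triangle cactus $H$; by additivity $g(G)=\mu(G)\leq k+1$. This rules out $K_3$ components: if $G_j\cong K_3$, contracting an edge of $G_j$ produces the proper minor $K_2\cup G_{-j}$ with $f(K_2\cup G_{-j})=f(G_{-j})\leq k$ by minor-minimality, while $f(G)=\min\{1+f(G_{-j}),g(G_{-j})\}>k$ forces $g(G_{-j})\geq k+1$, contradicting $g(G_{-j})=\mu(G_{-j})\leq\mu(G)-1\leq k$. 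The central claim is then that each $G_i$ is a minor-minimal connected cactus obstruction, i.e., $G_i\in\cactobs{k_i-1}$ for $k_i:=f(G_i)\geq 1$; applying \autoref{holiness} then yields $G_i\in{\cal Z}_{k_i}$. To prove the claim, suppose some $G_i$ has a proper minor $G_i^\star$ with $f(G_i^\star)\geq k_i$ and pick $G_i^\star$ of minimum size, so that $G_i^\star$ is itself a minor-minimal cactus obstruction with $g(G_i^\star)=k_i$ (by \autoref{holiness} or the inductive hypothesis for disconnected minors). Then $G^\star:=G_i^\star\cup G_{-i}$ is a proper minor of $G$, so $f(G^\star)=k_i+f(G_{-i})\leq k$; combined with $f(G)>k$ and $f(G_i)=k_i$, this gives $k_i+g(G_{-i})>k$, forcing $g(G_{-i})>f(G_{-i})$ and hence a component of $G_{-i}$ with positive savings---impossible, since the ``no $K_3$'' step together with the inductive hypothesis forces every component of $G_{-i}$ into some ${\cal Z}_\cdot$ (where savings vanish).

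Finally, with each $G_i\in{\cal Z}_{k_i}$, savings vanish, $f(G)=\sum k_i$, and $\sum k_i=\mu(G)\leq k+1$; combined with $f(G)>k$, this forces $\sum k_i=k+1$, as required. The main obstacle is the substitution argument in the central claim, which requires careful bookkeeping of $f$ and $g$ across disjoint unions and an appeal to the inductive hypothesis to handle the potentially disconnected minor $G_i^\star$.
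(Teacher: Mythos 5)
Your route is genuinely different from the paper's: the paper first shows that any cactus with no $(k+2)K_{3}$ minor admits a $(k+1)$-apex forest set (\autoref{supplied}), then for an arbitrary bipartition of the components splits a minimum such set across the two sides and shows each side is an obstruction for the corresponding smaller class (\autoref{decoding}), finishing by recursion and \autoref{holiness}. You instead work with the parameters $f$ (apex sub-unicyclic number), $g$ (apex forest number) and $\mu$ (triangle packing), use $g=\mu$ on triangle cacti, exclude $K_3$ components, and substitute a minor-minimal $G_i^\star$ back into $G$. However, as written your argument has a genuine gap at its decisive step: to rule out ``a component of $G_{-i}$ with positive savings'' you appeal to ``the inductive hypothesis forces every component of $G_{-i}$ into some ${\cal Z}_\cdot$''. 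The components of $G_{-i}$ are components of the very graph $G$ under consideration at level $k$; the induction hypothesis only concerns disconnected obstructions in $\cactobs{k'}$ for $k'<k$, and nothing you have proved shows that $G_{-i}$ (or any sub-union of components of $G$) is an obstruction at a lower level. Knowing that the components of $G$ lie in ${\cal Z}$-classes is exactly the conclusion being proved, so this appeal is circular. The paper's replacement argument avoids the issue entirely by keeping the \emph{forest} part $S_j$ of the split set on the untouched side, so no comparison of $f$ and $g$ on the other components is ever needed. A secondary slip: your claim $g(G_i^\star)=k_i$ fails when the minor-minimal $G_i^\star$ is disconnected and isomorphic to $(k_i+1)K_3$, where $g(G_i^\star)=k_i+1$.

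That said, the missing fact is actually an immediate consequence of bounds you already establish, so the gap is repairable inside your own framework: since $G\notin{\cal A}_{k}({\cal S})$ you have $f(G)\geq k+1$, while $f(G)\leq g(G)=\mu(G)\leq k+1$, hence $f(G)=g(G)=k+1$; by additivity of $g$ and your formula $f(H_A\cup H_B)=\min\{g(H_A)+f(H_B),f(H_A)+g(H_B)\}$ (extended over all components), equality $f(G)=g(G)$ forces $f(C)=g(C)$ for \emph{every} component $C$ of $G$. This is exactly the ``no savings'' statement you need for $G_{-i}$ (and it also gives $\sum_i k_i=g(G)=k+1$ directly). With it, $f(G_i^\star\cup G_{-i})\geq k_i+(k+1-k_i)=k+1>k$, contradicting minor-minimality of $G$ even in the overlooked subcase $G_i^\star\cong(k_i+1)K_3$. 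So replace the circular appeal by this one-line computation (and lightly adjust the $g(G_i^\star)$ claim), and your proof closes; as submitted, though, the justification of the central claim is not valid.
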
\medskip

We begin with the following Lemma which implies that every obstruction $G \in \cactobs{k}$ not isomorphic to $(k+2) K_3$ is also a $(k+1)$-forest. 

\begin{lemma}
	\label{supplied}
	For every $k\in\Bbb{N}$ and for every cactus graph $G$ it holds that if $(k+2) K_3\not\leq G$ then $G$  contains  a $(k+1)$-apex forest set $S$.
\end{lemma}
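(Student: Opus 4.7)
The plan is to proceed by induction on $k$. For the base case $k=0$ the claim says that any cactus $G$ with $2K_{3}\not\leq G$ admits a $1$-apex forest set. Since every cycle contains $K_3$ as a minor, the hypothesis is equivalent to saying that $G$ has no two vertex-disjoint cycles, i.e.\ any two cycle-blocks of $G$ share a vertex. Using the fact that the block-cut-vertex decomposition of a cactus is a tree (or forest), I would establish a Helly-type property: if three cycle-blocks $C_1,C_2,C_3$ pairwise intersected at three \emph{distinct} cut-vertices $v_{12},v_{13},v_{23}$, then the bc-tree $T_G$ would contain a $6$-cycle $C_1{-}v_{12}{-}C_2{-}v_{23}{-}C_3{-}v_{13}{-}C_1$, contradicting that $T_G$ is a tree. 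Iterating, all cycle-blocks share a common vertex $v$, and $S=\{v\}$ (or $S=\emptyset$ if $G$ is a forest) is the desired $1$-apex forest set.

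For the inductive step, I would assume the result for $k-1$ and consider a cactus $G$ with $(k+2)K_{3}\not\leq G$. I would run a secondary induction on $|V(G)|$ to first dispose of pendant and isolated vertices: if $u$ is such a vertex, then $G'=G\setminus u$ is again a cactus with $(k+2)K_{3}\not\leq G'$, and any $(k+1)$-apex forest set of $G'$ remains one in $G$, since attaching a pendant or an isolated vertex to a forest yields a forest. I may therefore assume $\delta(G)\geq 2$, which forces every leaf of the block forest $T_G$ to be a cycle-block. If $G$ is already a forest, take $S=\emptyset$; otherwise pick a leaf cycle-block $B$ of $T_G$ and let $v$ be the (unique) cut-vertex of $G$ lying in $V(B)$, or, in case $B$ is a whole connected component of $G$, any vertex of $V(B)$.

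Set $G''=G\setminus V(B)$, which is a cactus vertex-disjoint from $V(B)$. Because $B$ is a cycle and hence contains $K_3$ as a minor, the assumption $(k+2)K_{3}\not\leq G$ forces $(k+1)K_{3}\not\leq G''$ (otherwise the disjoint union of $B$ and the $(k+1)K_3$-minor inside $G''$ would yield a $(k+2)K_3$-minor of $G$). By the primary inductive hypothesis, $G''$ admits a $k$-apex forest set $S''$. I claim $S:=S''\cup\{v\}$ is a $(k+1)$-apex forest set of $G$: the only edges of $G$ between $V(B)$ and $V(G'')$ pass through the cut-vertex $v$ (and there are none at all if $B$ is its own component), so $G\setminus S$ is the disjoint union of the path $B\setminus v$ and the forest $G''\setminus S''$, hence a forest, with $|S|\leq k+1$.

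The only genuinely delicate point is the Helly property in the base case, which is what turns ``no two vertex-disjoint cycles'' into the much stronger ``all cycles share a vertex''; everything else is a clean peeling argument on leaf cycle-blocks, driven by the observation that removing one such block $B$ decreases the maximal triangle-packing by exactly one, so the hypothesis on $G$ transfers to $G\setminus V(B)$ with $k$ replaced by $k-1$.
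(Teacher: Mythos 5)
Your proof is correct and follows essentially the same strategy as the paper's: reduce to minimum degree at least two, peel a leaf cycle-block together with its unique cut-vertex, and conclude by induction on $k$ via the observation that the remaining cactus excludes $(k+1)K_3$. The only cosmetic differences are that the paper phrases the argument as a minimal counterexample (minimum $k$, then minimum $|V(G)|$) and deletes only the cut-vertex $c$ rather than all of $V(B)$, and it leaves your Helly-type base case $k=0$ as an unproved observation.
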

\begin{proof}
	Suppose, towards a contradiction, that $k$ is the minimum non-negative integer for which the contrary 
	holds. Let $G$ be a cactus graph with the minimum number of vertices such 
	that $(k+2)K_3\not\leq G$ and that for every apex-forest set $S$ of $G$ it holds that 
	$|S|> k+1$. Observe that $k\geq 1$ and that there exists some connected component $H$ of $G$ that is not isomorphic to a cycle.
	As such, let $B$ be a leaf-block of $H$ and observe that, since $G$ has the minimum number of vertices, every vertex of $G$ has degree at least 2 and therefore
	$B$ is isomorphic to a cycle.
	Since $H$ is not isomorphic to a cycle, there exists a cut-vertex $c\in V(B)$, which is unique since $B$ is a leaf-block of $H$.
	Now, consider the graph $G'=G\setminus c$ and observe that this too is a cactus.
	Observe, also, that $(k+1)K_{3}\not\leq G'$, since otherwise $(k+2)K_{3}\leq G$, a contradiction.
	Thus, by the minimality of $k$, we have that there exists a $k$-apex 
	forest set $S'$ of $G'$. But then, the set $S=S'\cup\{c\}$ is a $(k+1)$-apex forest 
	set of $G$, a contradiction to our assumption for $G$.
\end{proof}

We now proceed with the main lemma of this section.

\begin{lemma}\label{decoding}
	Let $k\geq 1$ and let $G$ be a disconnected cactus graph in $\cactobs{k}$  non-isomorphic to $(k+2)K_{3}$. 
	Let also $\{{\cal C}_{1},{\cal C}_{2}\}$ be a  partition of the connected components of $G$ and $G_{i}=\cupall {\cal C}_{i},i\in[2]$.
	Then $G_{1}\in \cactobs{k_{1} - 1}$ and $G_{2}\in \cactobs{k_{2} - 1}$ for some $k_{1},k_{2}\geq 1$ such that $k_{1}+k_{2}=k+1$.
\end{lemma}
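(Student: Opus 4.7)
The plan is to define $k_i := u(G_i)$, where for a graph $H$ we write $u(H)$ for the minimum size of an apex sub-unicyclic set of $H$ and $f(H)$ for the minimum size of an apex forest set, and then verify (i) $k_1+k_2=k+1$, (ii) $k_i\geq 1$, and (iii) each $G_i$ is minor-minimally outside ${\cal A}_{k_i-1}({\cal S})$. The crucial enabling tool is \autoref{supplied}: since $G\in\cactobs{k}$ every proper minor of $G$ lies in ${\cal A}_{k}({\cal S})$, so by \autoref{machines} the graph $(k+2)K_3$ is not a proper minor of $G$, and the hypothesis $G\not\cong(k+2)K_3$ then yields $(k+2)K_3\not\leq G$; hence $f(G)\leq k+1$.

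I would first record two arithmetic facts about disjoint unions. Since $G=G_1\cup G_2$, a subset $S$ is apex sub-unicyclic for $G$ iff at least one of $G_i\setminus(S\cap V(G_i))$ is a forest and the other is sub-unicyclic, which gives
$$u(G)=\min\!\bigl(f(G_1)+u(G_2),\ u(G_1)+f(G_2)\bigr),$$
while $f(G)=f(G_1)+f(G_2)$. By \autoref{conceive}, all blocks of $G$ are triangles, so an optimal apex sub-unicyclic set leaves at most one cycle which is a triangle, and removing one of its vertices yields an apex forest set; hence $\epsilon_i:=f(G_i)-u(G_i)\in\{0,1\}$. A short calculation then yields $u(G)=u(G_1)+u(G_2)+\min(\epsilon_1,\epsilon_2)$ and $f(G)-u(G)=\max(\epsilon_1,\epsilon_2)$.

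The next step is to pin down $u(G)$ and $f(G)$. We have $u(G)\geq k+1$ since $G\notin{\cal A}_{k}({\cal S})$; conversely, for any edge $e$ of $G$ (which exists by \autoref{segments}), the graph $G\setminus e$ is a proper minor of $G$, so $u(G)\leq u(G\setminus e)+1\leq k+1$. Together with $u(G)\leq f(G)\leq k+1$ from \autoref{supplied}, we obtain $u(G)=f(G)=k+1$, hence $\max(\epsilon_1,\epsilon_2)=0$ and so $\epsilon_1=\epsilon_2=0$. This gives $u(G_1)+u(G_2)=k+1$, proving (i). For (ii), if $u(G_i)=0$ then $G_i\in{\cal S}$; by \autoref{segments} and \autoref{conceive} its unique component is $K_3$, but $\epsilon(K_3)=1$, contradicting the previous step.

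For (iii), fix a proper minor $H$ of $G_i$; then $H\cup G_{3-i}$ is a proper minor of $G$, so $u(H\cup G_{3-i})\leq k$. Using the union identity with $f(G_{3-i})=u(G_{3-i})=k_{3-i}$ (as $\epsilon_{3-i}=0$), the expression $\min(f(H)+k_{3-i},\,u(H)+k_{3-i})$ equals $k_{3-i}+u(H)$, so $u(H)\leq k-k_{3-i}=k_i-1$, meaning $H\in{\cal A}_{k_i-1}({\cal S})$. The main subtlety lies in the second step: only by invoking \autoref{supplied} to force $\epsilon_1=\epsilon_2=0$ do we rule out the case where both parts have apex sub-unicyclic sets that leave behind an unavoidable extra triangle; that case would correspond exactly to $G\cong(k+2)K_3$, the excluded exception.
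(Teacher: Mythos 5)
Your proof is correct, and it rests on the same two external inputs as the paper's argument --- \autoref{machines} to rule out $(k+2)K_{3}$ as a proper minor (hence as any minor, given $G\not\cong(k+2)K_{3}$) and \autoref{supplied} to obtain an apex forest set of size at most $k+1$ --- but the bookkeeping is genuinely different. The paper fixes one $(k+1)$-apex forest set $S$ of $G$, defines $k_{i}=|S\cap V(G_{i})|$, and proves both halves of $G_{i}\in\cactobs{k_{i}-1}$ by exchange arguments on explicit apex sets. You instead define $k_{i}=u(G_{i})$ intrinsically and work with the parameters $u$ and $f$, via the disjoint-union identities $u(G_{1}\cup G_{2})=\min\bigl(f(G_{1})+u(G_{2}),\,u(G_{1})+f(G_{2})\bigr)$, $f(G_{1}\cup G_{2})=f(G_{1})+f(G_{2})$, and $0\le f-u\le 1$; once $u(G)=f(G)=k+1$ is pinned down, $\epsilon_{1}=\epsilon_{2}=0$ makes ``$G_{i}\notin{\cal A}_{k_{i}-1}({\cal S})$'' automatic and reduces minor-minimality to the one-line estimate $u(H)+k_{3-i}=u(H\cup G_{3-i})\le k$. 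This buys a cleaner, more mechanical verification (no apex sets need to be exhibited or swapped), at the cost of setting up the parameter identities; the two definitions of $k_{i}$ of course coincide in the end, since $\epsilon_{i}=0$ forces $|S\cap V(G_{i})|=f(G_{i})=u(G_{i})$ for any optimal $S$. Two small simplifications: the edge-deletion bound $u(G)\le u(G\setminus e)+1\le k+1$ is redundant, since $u(G)\le f(G)\le k+1$ already follows from \autoref{supplied}; and \autoref{conceive} is not needed either for $\epsilon_{i}\in\{0,1\}$ (the bound $f(H)\le u(H)+1$ holds for every graph) or for $k_{i}\ge 1$, where it suffices to note that if $u(G_{i})=0$ then $G_{i}$ still contains a cycle by \autoref{segments}, so $f(G_{i})\ge 1$ and $\epsilon_{i}\ge 1$, contradicting $\epsilon_{i}=0$.
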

\begin{proof}
	Clearly, since $G$ is a cactus graph, then the same holds for $G_{1}, G_{2}$.
	By \autoref{supplied}, there exists a $(k+1)$-apex forest set $S$ of  
	$G$. Notice that, since $G\not\in {\cal A}_{k}({\cal S})$, we have that $|S|=k+1$.	Also 
	observe that, by \autoref{segments}, neither of $G_{1}, G_{2}$ is a forest.	
	Let $S_{1}=S\cap V(G_{1}), S_{2}=S\cap V(G_{2})$ and let $k_{1}=|S_{1}|, k_{2}=|S_{2}|$. Note that, $k_{1},k_{2}\geq 1$ and, since 
	$V(G_{1})\cap V(G_{2})=\emptyset$, $k_{1}+k_{2}=k+1$. We argue that the following holds:
	
	\medskip
	
	\noindent{\em Claim 1:} For each $i\in[2]$, $G_{i}\not\in \kapexmo{k_{i} - 1}$. \smallskip
	
	\noindent{\em Proof of Claim 1:}
	Suppose, towards a contradiction, that $G_{i}\in \kapexmo{k_{i} - 1}$ for some 
	$i\in[2]$. Then, there exists a $(k_{i}-1)$-apex sub-unicyclic set $X_{i}$ of 
	$G_{i}$. But then, the set $X_{i}\cup S_{j}$, where $j\neq i$, is a $k$-apex 
	sub-unicycle set of $G$, a contradiction to the fact that $G\in \cactobs{k}$. Claim 1 follows.
	
	\medskip
	
	\noindent{\em Claim 2:}	For each $i\in[2]$, it holds that if $H_{i}$ is a proper 
	minor of $G_{i}$ then $H_{i}\in \kapexmo{k_{i} - 1}$.
	\smallskip
	
	\noindent{\em Proof of Claim 2:} Suppose, towards a contradiction, that for some 
	$i\in[2]$ there exists a proper minor $H_{i}$ of $G_{i}$ such that 
	$H_{i}\not\in{\cal A}_{k_{i}-1}({\cal S})$. Let $H= H_{i}\cup G_{j}$, where $j\neq i$. 
	As $G\in \cactobs{k}$, there exists a $k$-apex sub-unicyclic set $X$ of 
	$H$. Let $X_{i}=X\cap H_{i}$ and $X_{j}=X\cap G_{j}$. Then, as 
	$H_{i}\not\in {\cal A}_{k_{i}-1}{\cal S}$, we have that $|X_{i}|\geq k_{i}$ and 
	therefore the fact that $|X|\leq k$ implies that $|X_{j}|=|X|-|X_{i}|\leq 
	k-k_{i}=k_{j}-1$. Hence, the set $X_{j}\cup S_{i}$ is a $k$-apex sub-unicyclic 
	set of $G$, a contradiction to the fact that $G\in \cactobs{k}$. Claim 2 follows.
	
	\medskip
	
	Claim 1 and Claim 2 imply that $G_{1}\in \cactobs{k_{1} - 1}$ and $G_{2}\in \cactobs{k_{2} - 1}$, which concludes the proof of the Lemma.
\end{proof}

\begin{proof}[Proof of \autoref{selected}]
	The proof follows by \autoref{machines} and by repeated applications of \autoref{decoding}, as required.
\end{proof}

\section{Enumeration of cactus obstructions}
\label{stripop}

Let $\mathcal{G}=\textbf{obs}({\mathcal A}_{k}({\mathcal S}))\cap {\mathcal K}$. In this section, we determine the asymptotic growth of $g_k=|\textbf{obs}({\mathcal A}_{k}({\mathcal S}))\cap {\mathcal K}|$ and $z_k=|\mathcal{Z}_k| $.
To that end, we make use of the \emph{Symbolic Method} framework and the corresponding analytic techniques, as developed in~\cite{flajolet2009analytic}. 
\paragraph{The Symbolic Method.} A \emph{combinatorial class} is a tuple $(\mathcal{A},f)$, where $\mathcal{A}$ is a set and $f$ is a \emph{size} function $f:\mathcal{A}\rightarrow \mathbb{N}^*$. When the nature of $f$ is clear, we refer to $(\mathcal{A},f)$ as $\mathcal{A}$ for convenience. Let $a_k=|\{a\in \mathcal{A}: f(a)=k\}|$. The \emph{generating function}, or simply \emph{GF}, $A(z)$ is defined as the power series $A(z)=\sum _{n\geq 0}a_nz^n$.\footnote{By convention, we denote combinatorial classes by calligraphic uppercase letters, their GFs by the same uppercase letters in plain form, and use subscripted lowercase letters to denote the coefficients of the GFs.}
We also employ the notation $[z^k] A(z) = a_k$ to refer to the $k$-th coefficient of some GF $A(z)$.
Two combinatorial classes $\mathcal{A}, \mathcal{B}$ are \emph{isomorphic}, written as $\mathcal{A} = \mathcal{B}$, if $a_k=b_k$ for all $k$. The simplest combinatorial class is the \emph{atomic} one, denoted by $\mathcal{X}$, with GF $X(x)=x$.

The Symbolic Method allows us to translate operations between combinatorial classes into functional operations between their generating functions. In particular, we shall make use of the operations of \emph{disjoint sum} ($+$), \emph{cartesian product}, \emph{multiset} ($MSET$), \emph{2-multiset} ($MSET_2$), i.e. multisets of two objects. Each of these operations defines a new set upon given ones, in the obvious way. The size function upon the new class is an additive function over the size of the objects that compose the new object. For instance, the size of a multiset $b\in MSET(\mathcal{A})$ equals the sum of the sizes of all objects in $b$. The functional relations corresponding to each of these operations can be seen in~\autoref{hegelian}. We refer to~\cite[Chapter I]{flajolet2009analytic} for details.

\begin{table}[]
	\renewcommand{\arraystretch}{1.5}
	\centering
	\begin{tabular}{cc|c}
		Set Operation & & GF operation\\ \hline
		Disjoint sum & $\mathcal{A}=\mathcal{B}+\mathcal{C}$ & A(z)=B(z)+C(z)\\
		Cartesian Product & $\mathcal{A}=\mathcal{B}\mathcal{C}$ & A(z)=B(z)C(z)\\
		Multiset & $\mathcal{A}=MSET(\mathcal{B})$ & $A(z)=\exp \big(\sum _{k\geq 1}\frac{B(z^k)}{k}\big)$ \\
		2-multiset & $\mathcal{A}=MSET_2(\mathcal{B})$ & $A(z)=\frac{A(z)^2+A(z^2)}{2}$
	\end{tabular}
	\caption{Operations between combinatorial classes and their counterparts in terms of generating functions.}
	\label{hegelian}
\end{table}

\paragraph{Rooted trees.} A \emph{tree} is a connected graph for which it holds that $\lvert V(G) \lvert -1 = \lvert E(G) \lvert$. Let $\mathcal{T}$ be a family of trees. We define the family of trees in $\mathcal{T}$ {\em rooted at a vertex}, denoted by $\mathcal{T}^\bullet$, to be all tuples $(T,v)$, where $T \in \mathcal{T}$ and $v \in V(T)$. We define the family of trees in $\mathcal{T}$ {\em rooted at an edge}, denoted by $\mathcal{T}^{\bullet -\bullet}$, to be all tuples $(T,e),$ where $ T \in \mathcal{T},$ and  $e \in E(G)$. Finally, we define the family of trees in $\mathcal{T}$ {\em rooted at an oriented edge}, denoted by $\mathcal{T}^{\bullet \rightarrow\bullet}$, to be all tuples $(T,\overrightarrow{e})$, where $T \in \mathcal{T}$, $\overrightarrow{e} = (a,b) \in V(T)^2$, and $ab \in E(G)$. We say that two rooted trees $(T_1, r_1), (T_2, r_2)$ are isomorphic when there exists a graph isomorphism between $T_1, T_2$ that maps $r_1$ to $r_2$. When no confusion can arise, we will refer to a rooted tree $(T,r)$ simply as $T$. By the well-known \emph{Dissymmetry Theorem for Trees} (see~\cite{bergeron1998combinatorial}), it holds that
\begin{equation}\label{supplier}
\mathcal{T}+\mathcal{T}^{\bullet \rightarrow \bullet}=\mathcal{T}^\bullet+\mathcal{T}^{\bullet - \bullet}.
\end{equation}

\subsection{A bijection of $\mathcal{Z}$ with a family of trees.} 
We begin by giving a bijection between the combinatorial class $\mathcal{Z}$, i.e., connected graphs in $\textbf{obs}({\mathcal A}_{k}({\mathcal S}))\cap {\mathcal K}$ with size equal to the number of butterfly-subgraphs, and the following family of trees. Let $\mathcal{T}$ be the family of trees having three different types of vertices, namely $\mathsmaller{\square}$-, $\vartriangle$-, and $\circ$-vertices, and meeting the following conditions:
\begin{enumerate}
\item The neighbourhood of a $\mathsmaller{\square}$-vertex consists of two $\vartriangle$-vertices.
\item The neighbourhood of a $\vartriangle$-vertex consists of a $\mathsmaller{\square}$-vertex and two $\circ$-vertices.
\item The neighbourhood of a $\circ$-vertex consists of one or more $\vartriangle$-vertices.
\end{enumerate}



Consider the combinatorial class $(\mathcal{T},f)$ where $f$ assigns to a tree size equal to the number of its $\mathsmaller{\square}$-vertices. Then the following holds.

%
\begin{lemma}
\label{required}
The combinatorial classes $\mathcal{Z}$ and $\mathcal{T}$ are isomorphic.
\end{lemma}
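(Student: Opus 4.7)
The plan is to exhibit a size-preserving bijection $\Phi$ between the isomorphism classes of $\mathcal{Z}$ and those of $\mathcal{T}$, with an explicit inverse $\Psi$ built by reversing the construction. Given $G \in \mathcal{Z}_{k}$, I build $\Phi(G)$ on the vertex set $K(G)\cup \mathcal{B}(G)\cup (V(G)\setminus K(G))$, where $K(G)$ is the intrinsic set of central vertices supplied by \autoref{furrowed} and $\mathcal{B}(G)$ is the set of blocks of $G$, all triangles by \autoref{conceive}: vertices in $K(G)$ become $\mathsmaller{\square}$-vertices, those in $\mathcal{B}(G)$ become $\vartriangle$-vertices, and the remaining (non-central) vertices become $\circ$-vertices. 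Edges join a $\mathsmaller{\square}$-vertex $c$ to a $\vartriangle$-vertex $B$ exactly when $c\in V(B)$, and a $\vartriangle$-vertex $B$ to a $\circ$-vertex $v$ exactly when $v\in V(B)$. Since the number of $\mathsmaller{\square}$-vertices of $\Phi(G)$ equals the number of butterflies of $G$, the map $\Phi$ is size-preserving.

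Checking $\Phi(G)\in \mathcal{T}$ is then a short verification: each central vertex belongs to exactly the two triangles of its butterfly, each triangle contains exactly one central vertex and two non-central ones, and every non-central vertex lies in at least one triangle. Connectivity of $\Phi(G)$ follows from that of $G$, and a count by induction along the recursive definition of $\mathcal{Z}_{k}$ shows that each newly attached butterfly contributes exactly one $\mathsmaller{\square}$-, two $\vartriangle$-, and three $\circ$-vertices together with six new incidences, preserving $|V(\Phi(G))|=|E(\Phi(G))|+1$; hence $\Phi(G)$ is a tree and thus an element of $\mathcal{T}$.

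I would then define the inverse $\Psi:\mathcal{T}\to\mathcal{Z}$ by the reverse recipe: for each $\mathsmaller{\square}$-vertex $s$ of $T$, instantiate a butterfly $Z_{s}$ whose central vertex is identified with $s$, whose two triangles are identified with the two $\vartriangle$-neighbors of $s$, and whose four extremal vertices are identified with the four $\circ$-vertices at distance two from $s$ (these four vertices are pairwise distinct, else $T$ would contain a $4$-cycle). Finally, glue together, across distinct butterflies, any extremal-vertex copies that were identified with the same $\circ$-vertex of $T$.

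The main obstacle is to show that $\Psi(T)\in \mathcal{Z}_{k}$, where $k$ is the number of $\mathsmaller{\square}$-vertices of $T$. I would proceed by induction on $k$; the case $k=1$ is immediate. For the step, root $T$ at an arbitrary vertex and pick a $\mathsmaller{\square}$-vertex $s$ of maximum depth. Because $T$ is bipartite with $\mathsmaller{\square}$-vertices and non-$\mathsmaller{\square}$-vertices alternating along every path, the maximality of $s$ forces the three non-parent $\circ$-vertices at distance two from $s$ to be leaves of $T$; this is the tree-side analogue of \autoref{solution}. Deleting $s$, its two $\vartriangle$-neighbors, and these three leaf $\circ$-vertices produces a smaller tree $T'\in\mathcal{T}$ with $k-1$ $\mathsmaller{\square}$-vertices, and the inductive hypothesis gives $\Psi(T')\in\mathcal{Z}_{k-1}$. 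Reattaching the peeled-off butterfly at the unique remaining $\circ$-vertex exhibits $\Psi(T)$ as an element of $\mathcal{Z}_{k}$, in accordance with the recursive definition of $\mathcal{Z}_{k}$. The identities $\Phi\circ\Psi = \mathrm{id}$ and $\Psi\circ\Phi = \mathrm{id}$ then follow directly from the constructions, so $\mathcal{Z}\cong \mathcal{T}$ as combinatorial classes.
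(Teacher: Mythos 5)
Your construction is exactly the bijection the paper uses (central vertices become $\mathsmaller{\square}$-vertices, the two triangles of each butterfly become $\vartriangle$-vertices, non-central vertices become $\circ$-vertices, with incidence giving the edges), so the proposal is correct and takes essentially the same approach. The only difference is that you spell out the inverse map and verify it lands in $\mathcal{Z}$ by peeling off a deepest butterfly, a verification the paper leaves as an observation.
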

\begin{proof}
We shall construct a bijection $\phi: \mathcal{Z} \to \mathcal{T}$ that preserves the size function.
Given a graph $G \in \mathcal{Z}$ whose butterfly-subgraphs we denote as $\{Z_1, \dots, Z_k\}$, let $\phi(G) = T \in \mathcal{T}$ be a tree constructed as such:
\begin{itemize}
\item To every central vertex $c_i$ of $Z_i$ we associate a $\mathsmaller{\square}$-vertex $s_i$.
\item To every vertex $v\in G$ which is not a central vertex of some $Z_i$, we associate a $\circ$-vertex $o_v$. 
\item To each $K_3$-subgraph $T_{i_1}, T_{i_2}$ of $Z_i$, we associate a $\vartriangle$-vertex $t_{i_1}, t_{i_2}$, respectively. The neighbourhood of $t_{i_j}$ consists of the $\mathsmaller{\square}$-vertex $c_i$ and the two $\circ$-vertices associated to the vertices of $T_{i_j} \setminus c_i$. 
\end{itemize} 
Observe that the described tree belongs in $\mathcal{T}$ and that $\phi $ is a bijection (see also Figure~\ref{marching}). Also, notice that $\phi$ is also size-preserving, since the number of butterfly-subgraphs of $Z\in \mathcal{Z}$ equals the number of $\mathsmaller{\square}$-vertices in $\phi(Z)$. 
\end{proof}
\begin{figure}[!h]
\begin{center}
\minipage{0.4\textwidth}
\resizebox{0.8\linewidth}{!}{
  \includegraphics{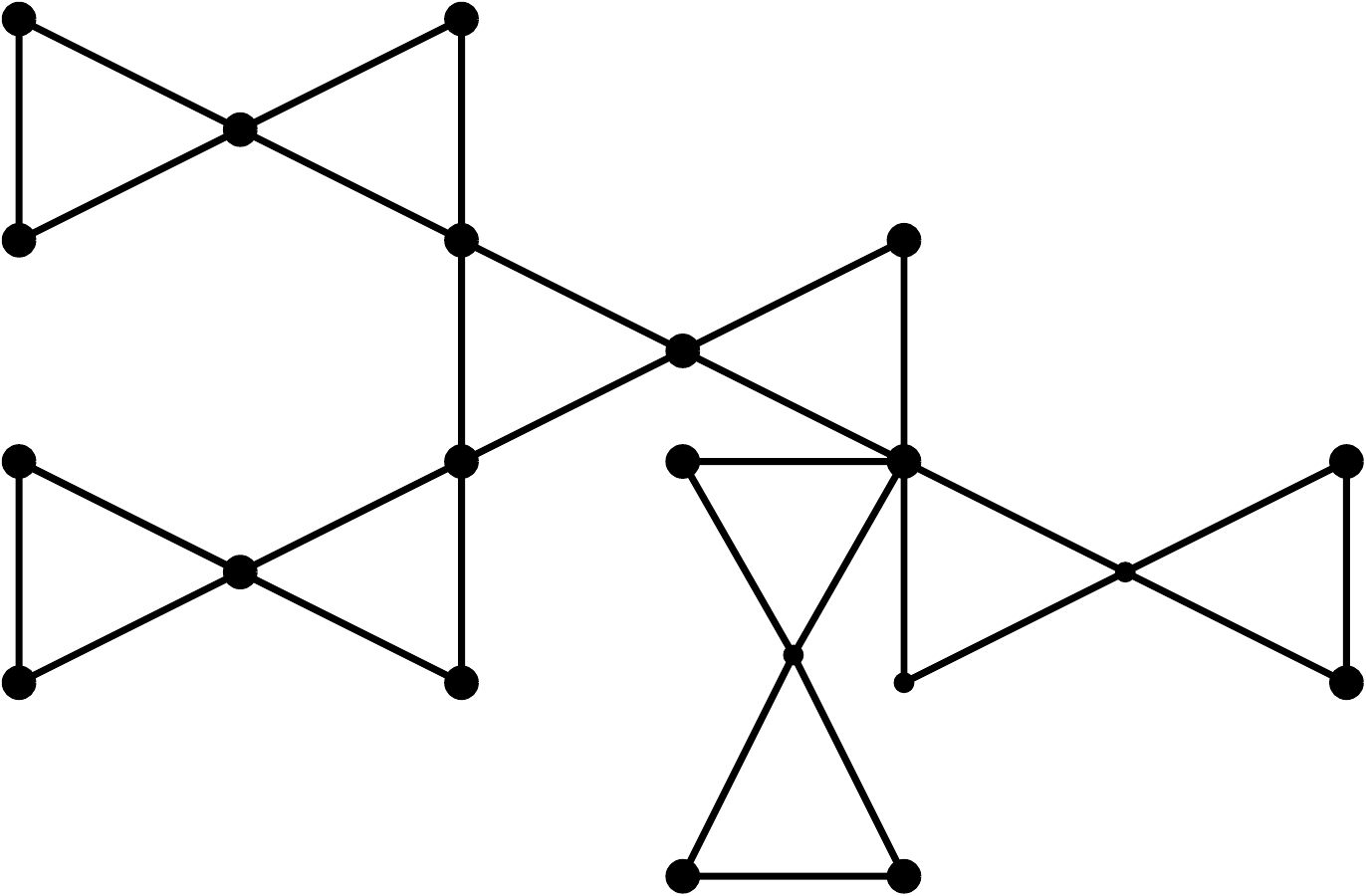}}
\endminipage~~~
\minipage{0.35\textwidth}
\resizebox{1.2\linewidth}{!}{
  \includegraphics{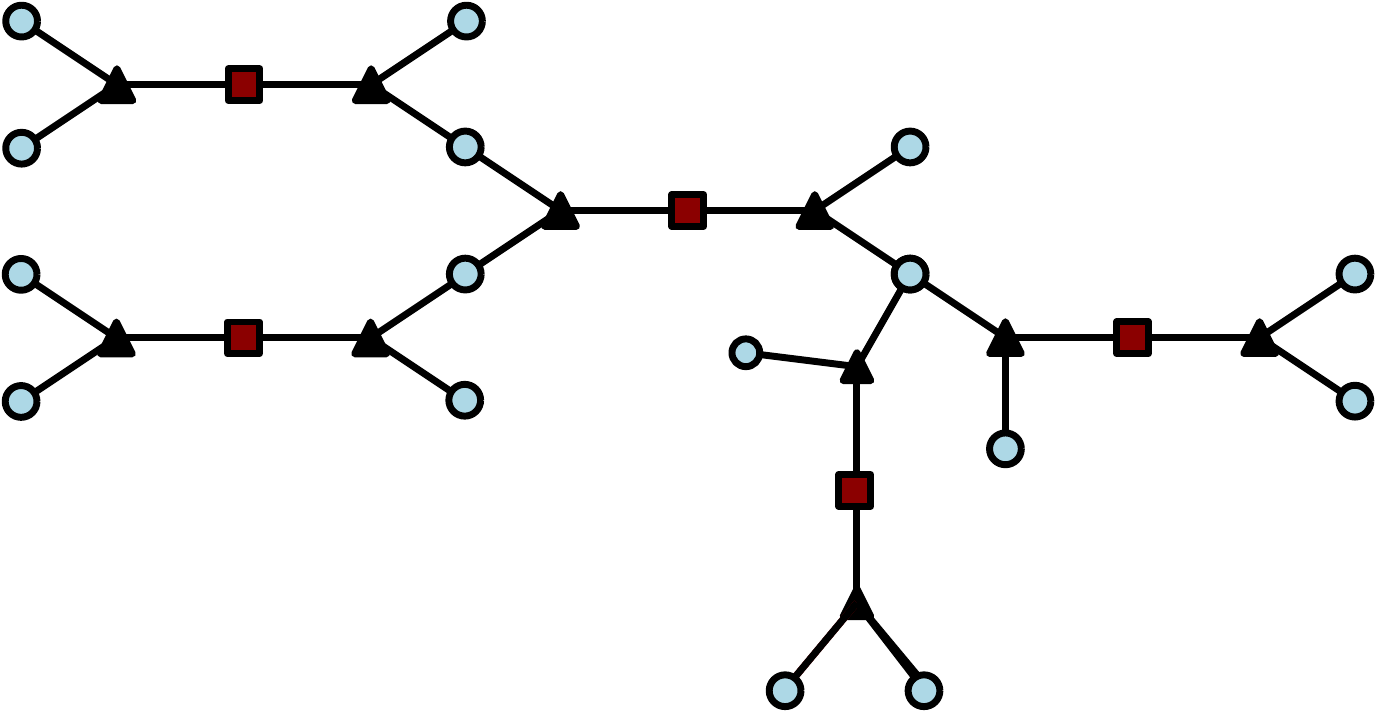}}
\endminipage
\end{center}
\caption{A graph in $\mathcal{Z}$ and its image in $\mathcal{T}$, under the bijection described in \autoref{required}.}
\label{marching}
\end{figure}

By Lemma~\ref{required}, enumerating $\mathcal{Z}$ is equivalent to enumerating $\mathcal{T}$. To that end, we will make use of the combinatorial classes $\mathcal{T}^{\mathsmaller{\mathsmaller{\square}}} ,\mathcal{T}^\vartriangle ,\mathcal{T}^\circ ,  \mathcal{T}^{\circ- \vartriangle} , \mathcal{T}^{\mathsmaller{\mathsmaller{\square}}- \vartriangle}, \mathcal{T}^{\mathsmaller{\mathsmaller{\square}} \rightarrow \vartriangle} , \mathcal{T}^{\vartriangle \rightarrow \mathsmaller{\square}} , \mathcal{T}^{\vartriangle \rightarrow \circ} , \mathcal{T}^{\circ \rightarrow \vartriangle},
$ which correspond to trees in $\mathcal{T}$ that are rooted in the indicated way.

\begin{lemma}\label{shimmers}The following functional relations hold for $T(z)$ and $G(z)$:
\begin{align}\label{reliably}
T(x) & =  T^{\mathsmaller{\mathsmaller{\square}}}(x)+T^\vartriangle(x)+T^\circ(x)-T^{\mathsmaller{\mathsmaller{\square}}\rightarrow \vartriangle}(x)-T^{\vartriangle\rightarrow \circ}(x)\\ 
 G(x) & =  \exp \bigg(\sum _{k\geq 1}\frac{T(x^k)}{k}\bigg). \nonumber
\end{align}
\end{lemma}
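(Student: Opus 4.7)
The plan is to prove the two equations in turn. For the first, I would invoke the Dissymmetry Theorem for Trees, i.e.\ the identity~(\ref{supplier}), applied to the family $\mathcal{T}$. The adjacency constraints (1)--(3) defining $\mathcal{T}$ force every edge to join either a $\mathsmaller{\square}$-vertex to a $\vartriangle$-vertex, or a $\vartriangle$-vertex to a $\circ$-vertex; no other edge types exist. Hence the rooted, edge-rooted, and oriented-edge-rooted classes split according to the type(s) at the root as
\begin{align*}
\mathcal{T}^\bullet &= \mathcal{T}^{\mathsmaller{\square}} + \mathcal{T}^\vartriangle + \mathcal{T}^\circ,\\
\mathcal{T}^{\bullet-\bullet} &= \mathcal{T}^{\mathsmaller{\square}-\vartriangle} + \mathcal{T}^{\vartriangle-\circ},\\
\mathcal{T}^{\bullet\rightarrow\bullet} &= \mathcal{T}^{\mathsmaller{\square}\rightarrow\vartriangle} + \mathcal{T}^{\vartriangle\rightarrow\mathsmaller{\square}} + \mathcal{T}^{\vartriangle\rightarrow\circ} + \mathcal{T}^{\circ\rightarrow\vartriangle}.
\end{align*}

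Next, I would observe that each unoriented edge whose endpoints carry a fixed type-pair admits exactly two orientations (one for each endpoint serving as head), giving the bijective identities $\mathcal{T}^{\mathsmaller{\square}-\vartriangle} = \mathcal{T}^{\mathsmaller{\square}\rightarrow\vartriangle} = \mathcal{T}^{\vartriangle\rightarrow\mathsmaller{\square}}$ and $\mathcal{T}^{\vartriangle-\circ} = \mathcal{T}^{\vartriangle\rightarrow\circ} = \mathcal{T}^{\circ\rightarrow\vartriangle}$. Substituting the three decompositions into~(\ref{supplier}) and cancelling $\mathcal{T}^{\vartriangle\rightarrow\mathsmaller{\square}}$ and $\mathcal{T}^{\circ\rightarrow\vartriangle}$ from both sides yields
\[
\mathcal{T} + \mathcal{T}^{\mathsmaller{\square}\rightarrow\vartriangle} + \mathcal{T}^{\vartriangle\rightarrow\circ} = \mathcal{T}^{\mathsmaller{\square}} + \mathcal{T}^\vartriangle + \mathcal{T}^\circ,
\]
which, rewritten in terms of generating functions via the Symbolic Method, is exactly~(\ref{reliably}).

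For the second equation, I would interpret $\mathcal{G}$ as the combinatorial class of disjoint unions (multisets) of connected cactus-obstructions from $\mathcal{Z}$, as justified by Theorem~\ref{selected} (the exceptional $(k+2)K_3$ component is handled separately in what follows the lemma). Combining this with the size-preserving isomorphism $\mathcal{Z} \cong \mathcal{T}$ from Lemma~\ref{required} gives $\mathcal{G} = MSET(\mathcal{T})$, and the multiset-to-GF translation rule recorded in Table~\ref{hegelian} immediately produces the stated formula for $G(x)$.

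Overall, the proof is essentially bookkeeping: the only place requiring care is the correct enumeration of the edge types available in $\mathcal{T}$ and the careful matching of unoriented and oriented edges of a given type-pair when performing the cancellations in~(\ref{supplier}); once this is done, both equations fall out of standard symbolic-method identities.
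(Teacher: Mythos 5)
Your proposal is correct and takes essentially the same route as the paper: both decompose $\mathcal{T}^\bullet$, $\mathcal{T}^{\bullet-\bullet}$, and $\mathcal{T}^{\bullet\rightarrow\bullet}$ by the types of the root vertex or root edge, identify each unoriented edge-rooted class with an oriented one (valid since the two endpoints of any edge have distinct types, so no automorphism can swap them), substitute into the Dissymmetry Theorem, and obtain the second equation from $\mathcal{G}=MSET(\mathcal{Z})=MSET(\mathcal{T})$ via the multiset translation rule. Your explicit remark about the exceptional $(k+2)K_3$ obstructions is a point the paper's own proof passes over silently, so no gap there.
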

\begin{proof}
It is clear that
\begin{align*}
\mathcal{T}^\bullet &= \mathcal{T}^{\mathsmaller{\mathsmaller{\square}}}+\mathcal{T}^\vartriangle+\mathcal{T}^\circ\\
 \mathcal{T}^{\bullet \rightarrow \bullet} &=  \mathcal{T}^{\mathsmaller{\mathsmaller{\square}} \rightarrow \vartriangle}+\mathcal{T}^{\vartriangle \rightarrow \mathsmaller{\square}}+\mathcal{T}^{\vartriangle \rightarrow \circ}+\mathcal{T}^{\circ \rightarrow \vartriangle}\\
 \mathcal{T}^{\bullet - \bullet} &=  \mathcal{T}^{\circ- \vartriangle} +\mathcal{T}^{\mathsmaller{\mathsmaller{\square}}- \vartriangle}\\
\mathcal{T}^{\circ- \vartriangle} &= \mathcal{T}^{\circ \rightarrow \vartriangle}\\ \mathcal{T}^{\mathsmaller{\mathsmaller{\square}}- \vartriangle}&= \mathcal{T}^{\vartriangle\rightarrow \mathsmaller{\mathsmaller{\square}} }
\end{align*} 
Then, the first relation follows by substituting the above in Equation~\ref{supplier} and translating to GFs. The second relation follows by noticing that $\mathcal{G}=MSET(\mathcal{Z})=MSET(\mathcal{T})$.
\end{proof}
To obtain defining systems for $T^{\mathsmaller{\mathsmaller{\square}}} (x),T^{\vartriangle} (x),T^\circ (x),T^{\mathsmaller{\mathsmaller{\square}}\rightarrow \vartriangle}(x),T^{\vartriangle\rightarrow \circ}(x)$, we define the auxiliary combinatorial classes $\mathcal{T}_{\diamond}$ and $\mathcal{T}_{\star}$. $\mathcal{T}_{\diamond}$ contains trees in $\mathcal{T}$ rooted at a leaf and $\mathcal{T}_{\star}$ contains multisets of trees in $\mathcal{T}_{\diamond}$.
\begin{lemma}\label{ulterior}
The generating functions 
$\mathcal{T}_{\star},\mathcal{T}_{\diamond},T^{\mathsmaller{\mathsmaller{\square}}} ,T^\vartriangle ,T^\circ ,T^{\mathsmaller{\mathsmaller{\square}}\rightarrow \vartriangle},T^{\vartriangle\rightarrow \circ}$
are defined through the following system of functional equations.
\begin{align*}
T_{\diamond}(x) &=  \frac{x}{2}\exp \bigg( \sum _{k\geq 1}\frac{T_{\diamond}(x^k)}{k}\bigg) \bigg(\exp \bigg( \sum _{k\geq 1}\frac{2\,T_{\diamond}(x^k)}{k}\bigg)+\exp \bigg( \sum _{k\geq 1}\frac{T_{\diamond}(x^{2k})}{k}\bigg)\bigg)\\
T_{\star} (x)&=  \exp \bigg( \sum _{k\geq 1}\frac{T_{\diamond}(x^k)}{k}\bigg)\\
T^\circ (x) &= \exp\bigg(\sum _{k\geq 1} \frac{T_{\diamond}(x^k)}{k}\bigg)-1\\
T^{\mathsmaller{\mathsmaller{\square}}} (x) &= \frac{x}{8}\,  T_{\star} \left( x \right)  ^{4}+ \frac{x}{4}  T_{\star} ( 
x)  ^{2}T_{\star} (
x^2 )+ \frac{3x}{8}\, T_{\star} ( x^2 )^2
+ \frac{x}{4}\,T_{\star} (x^4)
\\
T^\vartriangle (x) &= \frac{x}{4}\,  T_{\star}  ( x )  ^{4}+ \frac{x}{2}\, T _{\star} ( 
x )  ^{2}T_{\star}  ( {x}^{2} ) + \frac{x}{4} T_{\star} 
( {x}^{2} )  ^{2}
\\
T^{\mathsmaller{\mathsmaller{\square}}\rightarrow \vartriangle} (x) &=  \frac{x}{4}\,  T_{\star}  ( x )  ^{4}+ \frac{x}{2}\, T_{\star}  ( 
x )  ^{2}T_{\star}  ( {x}^{2} ) + \frac{x}{4} T_{\star} 
( {x}^{2} )  ^{2}\\
T^{\vartriangle\rightarrow \circ} (x) &=  \frac{x}{2}\, T_{\star} \left( x \right)   ^{4}+\frac{x}{2}\,T_{\star}(x)^2\,T_{\star} ( {x}^{2}
)
\end{align*}
\end{lemma}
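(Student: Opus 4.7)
The plan is to apply the Symbolic Method uniformly to all seven generating functions, exploiting the fact that the vertex-degrees in a tree $T\in\mathcal{T}$ are almost fixed — exactly $2$ for a $\mathsmaller{\square}$-vertex, exactly $3$ for a $\vartriangle$-vertex, and at least $1$ for a $\circ$-vertex — and that every edge of $T$ joins vertices of different types. I first introduce two auxiliary rooted-subtree classes that serve as the main building blocks: let $\mathcal{D}$ be the class of $\mathsmaller{\square}$-rooted subtrees with a distinguished $\vartriangle$-parent, and let $\mathcal{C}$ be the class of $\circ$-rooted subtrees with a distinguished $\vartriangle$-parent. A $\vartriangle$-rooted subtree with a distinguished $\circ$-parent is in bijection with an element of $\mathcal{T}_\diamond$ via the map that attaches the parent $\circ$-vertex as a ``virtual'' leaf, so $\mathcal{C}=MSET(\mathcal{T}_\diamond)$; translating via \autoref{hegelian} yields simultaneously the stated equation for $T_\star(x)$ and the identification $C(x)=T_\star(x)$. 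Similarly, $\mathcal{D}$ decomposes as a $\mathsmaller{\square}$-root (size-factor $x$) attached to a unique other $\vartriangle$-vertex whose two $\circ$-children form an unordered pair, hence $D(x)=x\cdot MSET_2(\mathcal{C})(x)=\tfrac{x}{2}\bigl(T_\star(x)^2+T_\star(x^2)\bigr)$.

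With $\mathcal{C}$ and $\mathcal{D}$ in hand, each of the remaining five equations follows from a one-step decomposition at the root. For $T_\diamond$, the root $\circ$-leaf has a unique $\vartriangle$-child $u$, and the two subtrees hanging below $u$ are a $\mathsmaller{\square}$- and a $\circ$-subtree (different root-types, hence a direct product), contributing $D(x)\cdot C(x)$, which is exactly the claimed formula. For $T^\circ$, a $\circ$-rooted tree is a non-empty multiset of $\vartriangle$-subtrees-with-$\circ$-parent, giving $T^\circ(x)=T_\star(x)-1$. For $T^{\vartriangle}$ the root $\vartriangle$ has one $\mathsmaller{\square}$-child (contribution $\mathcal{D}$) and an unordered pair of $\circ$-children (contribution $MSET_2(\mathcal{C})$), so $T^{\vartriangle}=\mathcal{D}\cdot MSET_2(\mathcal{C})$ and expanding $MSET_2$ gives the three-term formula. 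For $T^{\mathsmaller{\mathsmaller{\square}}}$, the root $\mathsmaller{\square}$ (factor $x$) bears an unordered pair of $\vartriangle$-children, each of which is itself an $MSET_2(\mathcal{C})$, so $T^{\mathsmaller{\mathsmaller{\square}}}=x\cdot MSET_2\bigl(MSET_2(\mathcal{C})\bigr)$; applying the $MSET_2$ formula twice and collecting terms yields the four-term expression. Finally, the orientation of the root-edge breaks exactly one symmetry in each oriented-edge class: for $T^{\mathsmaller{\mathsmaller{\square}}\rightarrow\vartriangle}$ the $\mathsmaller{\square}$-endpoint's one remaining $\vartriangle$-neighbour is an $MSET_2(\mathcal{C})$ just as the two $\circ$-children of the $\vartriangle$-endpoint are, giving $x\cdot MSET_2(\mathcal{C})^2$; whereas for $T^{\vartriangle\rightarrow\circ}$ the chosen $\circ$-neighbour marks the $\vartriangle$-endpoint's remaining children asymmetrically into an ordered pair $\mathcal{D}\times \mathcal{C}$, and the $\circ$-endpoint bears a possibly-empty multiset of further $\vartriangle$-subtrees-with-$\circ$-parent contributing another factor $\mathcal{C}$, so $T^{\vartriangle\rightarrow\circ}=\mathcal{D}\cdot \mathcal{C}\cdot \mathcal{C}$.

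The only real subtlety in the whole derivation is the bookkeeping of whether a pair of sibling subtrees is distinguishable (different vertex types at the root, yielding a direct product) or indistinguishable (same type, requiring $MSET_2$); together with the observation that, in $T^\circ$ only, the root $\circ$-vertex is forced to have a non-empty multiset of $\vartriangle$-children, since any $\circ$-vertex has degree at least $1$. Once this is respected, the translation to functional equations is mechanical and the stated equations follow verbatim by expanding $MSET$ and $MSET_2$ via the formulas in \autoref{hegelian}.
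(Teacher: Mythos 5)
Your proposal is correct and takes essentially the same approach as the paper: the class-level decompositions you obtain (e.g.\ $\mathcal{T}^{\mathsmaller{\mathsmaller{\square}}}=\mathcal{X}MSET_2(MSET_2(\mathcal{T}_\star))$, $\mathcal{T}^{\vartriangle}=\mathcal{X}MSET_2(\mathcal{T}_\star)^2$, $\mathcal{T}^{\vartriangle\rightarrow\circ}=\mathcal{X}\,\mathcal{T}_\star^2\,MSET_2(\mathcal{T}_\star)$) coincide with the paper's Equations~(\ref{wrapping})--(\ref{daughter}), your auxiliary classes being mere repackagings ($\mathcal{C}=\mathcal{T}_\star$ and $\mathcal{D}=\mathcal{X}MSET_2(\mathcal{T}_\star)$) of the paper's multisets $S_1,\ldots,S_4$ attached at the extended neighbourhood of the nearest $\mathsmaller{\square}$-vertex. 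Since your bookkeeping of which sibling subtrees are exchangeable agrees with the paper's in every case, the translation via \autoref{hegelian} yields the stated system verbatim.
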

\begin{proof}
We obtain the indicated functional equations by establishing the following combinatorial bijections in the language of the Symbolic method. The result follows by applying their translation to GF relations. 
\begin{align}
\mathcal{T}^\diamond  &=  \mathcal{X}MSET(\mathcal{T}_{\diamond})MSET_2(MSET(\mathcal{T}_{\diamond}))  \label{wrapping} \\
\mathcal{T}_{\star} &= MSET(\mathcal{T}_{\diamond}) \label{residual} \\
\mathcal{T}^\circ &= MSET(\mathcal{T}_{\diamond}  )-1 \label{hundreds} \\
\mathcal{T}^{\mathsmaller{\mathsmaller{\square}}} &= \mathcal{X}MSET_2(MSET_2(\mathcal{T}_{\star} )) \label{fissures} \\
\mathcal{T}^\vartriangle &= \mathcal{X}MSET_2(\mathcal{T}_{\star}  )^2 \label{obtained} \\
\mathcal{T}^{\mathsmaller{\mathsmaller{\square}}\rightarrow \vartriangle} &= \mathcal{X}MSET_2(\mathcal{T}_{\star} )^2 \label{generals} \\
\mathcal{T}^{\vartriangle\rightarrow \circ} &= \mathcal{X}(\mathcal{T}_{\star} )^2MSET_2(\mathcal{T}_{\star}  ). \label{daughter} 
\end{align}
%

\noindent We now justify each of the equations presented above.

\autoref{residual} immediately follows from the definition of $\mathcal{T}_{\star}$.

 Let $T\in\mathcal{T}$ and let $s$ be a $\mathsmaller{\square}$-vertex. We define its {\em extended neighbourhood} to be the set $\{o_1, o_2, o_3, o_4, t_1, t_2\}$ where $\{t_1, t_2\}$ is the neighbourhood of $s$, $o_1, o_2$ are the $\circ$-neighbours of $t_1$, and $o_3, o_4$ are the $\circ$-neighbours of $t_2$. Given some $s$ whose extended neighbourhood contains the $\circ$-vertices $o_i$, $i \in [4]$, we define $S_i = \{G \ \lvert  \ G \in \mathcal{C}(T, o_i) \land s \notin V(G) \}$, $i \in [4]$. Observe that each $S_i$ is a multiset of graphs which, rooted at $o_i$, are elements of $\mathcal{T}_{\diamond}$. Hence, $S_i\in MSET(\mathcal{T}_{\diamond})$, which equals $S_i\in\mathcal{T}_{\star}$. 

\autoref{wrapping}: Let $T \in \mathcal{T}_\diamond$, $o_1$ be its root vertex, and $s$ be the closest $\mathsmaller{\square}$-vertex to $o_1$ in $T$. Consider the extended neighbourhood defined by $s$ and the corresponding multisets $S_i$. Notice that if $S_3$ is exchanged with $S_4$, then the resulting tree remains the same
However, this does not hold when $S_2$ is exchanged with $S_3$ or $S_4$. Hence, $T$ defines uniquely (and is defined by) an object of $T_\star$ and a 2-set of objects in $T_\star$. The object $\mathcal{X}$ of the atomic class accounts for the vertex $s$, whose size equals 1.



\autoref{hundreds}: Let $o_1$ be the root of $T\in \mathcal{T}^\circ$. Observe that $\mathcal{C}(T,o_1) \neq \emptyset$ (this fact corresponds  to the term -1 in \autoref{hundreds}) and all elements of $\mathcal{C}(T,o_1)$, rooted at $o_1$, belong in $T_{\diamond}$. Hence, $T$ is uniquely defined by (and defines) a multiset of elements in $T_{\diamond}$.

\autoref{fissures}: Let $T \in \mathcal{T}^{\mathsmaller{\mathsmaller{\square}}}$ with root $s$. Consider the multisets $S_i$, $i \in [4]$, as defined using the extended neighbour of $s$. Observe that $S_1, S_2$ can be exchanged to give the same tree, and the same holds for $S_3, S_4$, so we can consider them as two 2-multisets. Moreover, one can exchange the pair $S_1, S_2$ with the pair $S_3, S_4$ to obtain the same tree. Therefore, the desired relation holds, where $\mathcal{X}$ accounts for $s$.



\autoref{obtained}: Let $T \in \mathcal{T}^\vartriangle$ and $s$ the $\mathsmaller{\square}$-vertex connected to the root. Consider the extended neighbourhood of $s$ and the sets $S_i$, $i \in [4]$. $S_1$ and $S_2$ are exchangeable, as well as $S_3$ and $S_4$. However, as pairs, they cannot be exchanged to give the same graph. Hence, $\mathcal{T}^{\mathsmaller{\vartriangle}}$ is equivalent to the cartesian product of two 2-multisets of objects in $\mathcal{T}_\star$. $\mathcal{X}$ accounts for the vertex $s$. 

\autoref{generals}: Holds by arguments similar to the ones used to prove \autoref{obtained}.


\autoref{daughter}: Let $T \in \mathcal{T}^{\vartriangle \rightarrow \circ}$, $(t_1, o_1)$ be its root, and $s$ be the $\mathsmaller{\square}$-vertex closest to $t_1$. Consider the multisets $S_i$, $i \in [4]$, as defined using the extended neighborhood of $s$. Note that one may exchange $S_3, S_4$ to obtain the same tree $T$. Note, also, that one may not exchange $S_1$ with $S_2$, since then one obtains a different tree.
The desired relation then follows, with the $\mathcal{X}$ factor accounting for $s$.\end{proof}
%
\noindent By the defining systems of $T(x)$ and $G(x)$, we can obtain the first terms of the series:
\begin{align*}
T(x) &= x+{x}^{2}+3\,{x}^{3}+7\,{x}^{4}+25\,{x}^{5}+88\,{x}^{6}+366\,{x}^{7}+
1583\,{x}^{8}+7336\,{x}^{9}+34982\,{x}^{10}+\cdots\\
G(x) &= 1+z+2\,{x}^{2}+5\,{x}^{3}+13\,{x}^{4}+41\,{x}^{5}+143\,{x}^{6}+558\,{
x}^{7}+2346\,{x}^{8}+10546\,{x}^{9}+49397\,{x}^{10}+\cdots 
\end{align*}

\subsection{Asymptotic Analysis} 

Having set up a system of functional equations for the generating functions $Z(x)$ and $G(x)$, we can determine the asymptotic growth of $z_k$ and $g_k$ via the process of \emph{Singularity Analysis}. We briefly mention the main tools we will use and refer to~\cite{flajolet2009analytic} for details. 

We call \emph{dented domain} at $x=\rho $ a set of the form $\{x\in \mathbb{C}\mid |x|<R,\,\arg(x-\rho)\notin [-\theta,\theta]\}$, for some $R>\rho$ and $0<\theta<\pi /2$.
 Let $f(x)=\sum _{k\geq 0}f_kx^k$ a GF analytic in a dented domain at $x=\rho $ that satisfies an expansion of the form 
\begin{equation*}
f(x) = F_0 +F_1X+F_2 X^2 + F_3 X^3 + \cdots  + F_{2k}X^{2k} + F_{2k+1}X^{2k+1} + O\left(X^{2k+2}\right)
\end{equation*}
locally around $\rho $, where $X = \sqrt{1 - x/\rho}$. We call \emph{singular exponent} the smallest odd exponent of $X$ divided by two, and denote it by $\alpha$. If $f_k>0$ for all $k$ big enough, then we can apply the so-called  \emph{Transfer Theorems} of singularity analysis~\cite[Corrollary VI.1, Theorem VI.4]{flajolet2009analytic} and obtain \begin{equation}\label{fanfares}
[x^n]f(x) \sim c \cdot n^{-\alpha-1}\cdot \rho^{-n},
\end{equation}
where $c = \frac{F_{2\alpha}}{\Gamma(-\alpha)}$ and $\Gamma$ is the standard Gamma function. To obtain such expansions, we will use the following Theorem.
\begin{theorem}[{\hspace{-.02cm}\cite[Proposition 1, Lemma 1]{drmota1997systems}}]\label{ridicule}
Suppose that $F(x, y)$ is an analytic function in $x,y$ such that $F(0, y)\equiv 0$, $F(x, 0)\not\equiv 0$, and all Taylor coefficients of $F$ around $0$ are real and nonnegative. Then, the unique solution $y=y(x)$ of the functional equation $y=F(x,y)$ with $y(0)=0$ is analytic around $0$ and has nonnegative Taylor coefficients $y_k$ around $0$.   Assume that the region of convergence of $F(x,y)$ is large enough such that there exist nonnegative solutions $x=x_0$ and $y=y_0$ of the system of equations 
\begin{align} y &= F(x,y),\\ 1 &= F_y(x,y),\end{align}
where $F_x(x_0,y_0)\neq0$ and $F_{yy}(x_0,y_0)\neq 0$.\footnote{Here, and in the sequel, subscripts will denote partial differentiation with respect to the subscripted variable(s).} Assume also that $y_k>0$ 
 for large enough $k$. Then, $\rho $ is the unique singularity of $f$ on its radius of convergence and there exist functions $q(x), h(x)$ which are analytic around $x=x_0$, such that $y(x)$ is analytically continuable in a dented domain at $\rho $
and, locally around $x=\rho$, it has a representation of the form 
\begin{equation} y(x)=
q(x)+h(x)\sqrt{\bigg( 1-\frac{x}{\rho} \bigg)}.\label{etiology} \end{equation}
\end{theorem}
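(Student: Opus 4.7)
The plan is to proceed in two stages: first establish analyticity of $y(x)$ on a neighbourhood of the real interval $[0,\rho)$ via the analytic implicit function theorem, and then extract the singular expansion at $\rho$ via the Weierstrass preparation theorem. To begin, since $F(x,y)$ has non-negative Taylor coefficients and $F(0,y)\equiv 0$, Picard iteration applied to $y=F(x,y)$ yields a unique formal power series solution $y(x)=\sum_{k\geq 0} y_k x^k$ with $y_0=0$ and $y_k\geq 0$. Along the positive real axis both $x\mapsto y(x)$ and $x\mapsto F_y(x,y(x))$ are non-decreasing, so the analytic implicit function theorem applied to $G(x,y):=y-F(x,y)$ continues $y(x)$ analytically on some open neighbourhood of $[0,\rho)$, where $\rho=x_0$ is the first point on the positive real axis at which $F_y(x,y(x))=1$. (Note that $\rho$ indeed coincides with $x_0$ because $(x_0,y_0)$ is characterized by the coupled system in the hypothesis.)

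At the critical point $(x_0,y_0)$ one has $G=G_y=0$ while $G_{yy}=-F_{yy}(x_0,y_0)\neq 0$. The Weierstrass preparation theorem therefore provides, on some neighbourhood of $(x_0,y_0)$, a factorization $G(x,y)=K(x,y)\,P(x,y)$ where $K$ is analytic and non-vanishing at $(x_0,y_0)$ and
\[
P(x,y)=(y-y_0)^2+a_1(x)(y-y_0)+a_0(x),
\]
with $a_0,a_1$ analytic at $x_0$ and $a_0(x_0)=a_1(x_0)=0$. Comparing $G_x(x_0,y_0)=-F_x(x_0,y_0)\neq 0$ with the $x$-derivative of the factorization at $(x_0,y_0)$ forces $a_0'(x_0)\neq 0$, so $a_0(x)=c(x-x_0)+O((x-x_0)^2)$ for some $c\neq 0$. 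Solving the quadratic $P=0$ yields
\[
y(x)=y_0+\tfrac{1}{2}\bigl(-a_1(x)\pm\sqrt{a_1(x)^2-4a_0(x)}\bigr),
\]
whose discriminant has a simple zero at $x_0$ of the form $-4c(x-x_0)+O((x-x_0)^2)$. Selecting the branch compatible with $y(x)$ being real and increasing on $[0,\rho)$ produces, after factoring $\sqrt{1-x/\rho}$ out of the square root, a local representation exactly of the form \ref{etiology} with $q(x),h(x)$ analytic at $\rho$.

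The main obstacle is promoting this \emph{local} expansion to analytic continuation on a full dented domain at $\rho$, which requires $\rho$ to be the only singularity of $y(x)$ on the circle $|x|=\rho$. This is an aperiodicity/positivity argument: if $\zeta\neq\rho$ on $|\zeta|=\rho$ were another boundary singularity, one exploits the eventual strict positivity $y_k>0$ (for large $k$) to rule it out via a Pringsheim/Daffodil-type comparison, in the spirit that $|y(\zeta)|<y(\rho)$ unless $\zeta=\rho$, contradicting the fact that a singularity would force $|y(\zeta)|$ to saturate. Once this uniqueness on the boundary is established, one combines the local branch at $\rho$ with the analyticity of $y(x)$ on the remaining closed arc of $|x|=\rho$ (which, by compactness, has an open analytic neighbourhood) to conclude that $y(x)$ continues to a dented domain at $\rho$ and admits the stated representation. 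I expect the bookkeeping in verifying $a_0'(x_0)\neq 0$ and fixing the correct branch to be routine; the genuine difficulty lies in the boundary-singularity exclusion.
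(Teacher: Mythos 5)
The paper never proves this statement: it is imported wholesale from Drmota's paper (Proposition~1 and Lemma~1 of the cited reference), so there is no internal proof to compare against. Your sketch reconstructs the standard argument behind that result (the smooth implicit-function/square-root schema): continuation of $y(x)$ along $[0,\rho)$ by the analytic implicit function theorem, a Weierstrass-preparation factorization at the critical point $(x_0,y_0)$ giving the local form $q(x)+h(x)\sqrt{1-x/\rho}$, and a positivity/aperiodicity argument to show $\rho$ is the only singularity of modulus $\rho$, which then yields continuation to a dented domain. This is exactly the route of the cited source (and of the corresponding theorem in Flajolet--Sedgewick), so the overall approach is the right one.

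Two steps are asserted rather than argued. First, the identification of the radius of convergence of $y$ with $x_0$: the parenthetical ``because $(x_0,y_0)$ is characterized by the coupled system'' does not by itself rule out $y(x)$ blowing up, or encountering a singularity, before $x_0$; one needs the usual argument that $y$ is increasing and bounded (by $y_0$) on the positive axis, that $F_y(x,y(x))<1$ there for $x<x_0$, and that by Pringsheim the radius of convergence is a positive real singularity, which the implicit function theorem would otherwise continue past --- forcing the radius to be exactly $x_0$ and $\lim_{x\to x_0^-}y(x)=y_0$. Second, the exclusion of other singularities on $|x|=\rho$ is not that ``$|y(\zeta)|$ must saturate'': the correct mechanism is that strict positivity of the $y_k$ for large $k$ gives $|y(\zeta)|<y(\rho)=y_0$ for $\zeta\neq\rho$ with $|\zeta|=\rho$ (daffodil-type inequality), hence $|F_y(\zeta,y(\zeta))|\leq F_y(\rho,|y(\zeta)|)<F_y(\rho,y_0)=1$ by monotonicity of the nonnegative coefficients, so the implicit function theorem applies at $(\zeta,y(\zeta))$ and shows $\zeta$ is a regular point. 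With that repair, compactness of the remaining arc together with your local branch at $\rho$ gives the dented-domain continuation, and the remaining bookkeeping (sign of $a_0'(x_0)$ via $F_x(x_0,y_0)>0$, branch selection, factoring out $\sqrt{1-x/\rho}$) is indeed routine.
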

In the proof of the latter Theorem, an explicit way is given to compute the coefficients $q_i,h_i$.
Using a computer algebra program like \texttt{Maple}, we can easily obtain:
\begin{eqnarray}\label{analysts}
h_0 &=& \sqrt{\frac{2\rho F_x(x_0,y_0)}{F_{yy}(x_0,y_0)}}, \quad h_1 \;\;  = \;\; \; \frac{1}{6} \frac{-F_{yyy}(x_0,y_0) h_0^2 + 6 F_{xy}(x_0,y_0) \rho}{2F_{yy}(x_0,y_0)},\\  
q_1 &=&  -\frac{1}{24} \frac{F_{yyyy}(x_0,y_0) h_0^4 -12 F_{xyyy}(x_0,y_0) h_0^2 \rho + 12 F_{yyy} (x_0,y_0)h_1 h_0^2 }{F_{yy}(x_0,y_0) h_0}+ \\
& & +\frac{ 12 F_{xx}(x_0,y_0) \rho^2 - 24 F_{xy}(x_0,y_0) h_1 \rho + 12 F_{xx}(x_0,y_0) h_1^2}{F_{yy}(x_0,y_0) h_0}. \nonumber\label{chrysler}
\end{eqnarray}

\begin{lemma}\label{stressed}
The generating functions $T_\diamond,T^{\mathsmaller{\mathsmaller{\square}}}, T^\vartriangle, T^{\circ} , T^{\mathsmaller{\mathsmaller{\square}}\rightarrow \vartriangle}, T^{\vartriangle\rightarrow \circ}$ have a unique singularity of smallest modulus, at the same positive number $\rho<1$.  Moreover, they are analytic in a dented domain at $\rho $ and satisfy expansions of the form
\begin{equation*}
A_0+\sum _{k\geq 1}A_kX^k,\quad \text{where}\quad X= \sqrt{1 - x/\rho},
\end{equation*}
locally around $\rho$. The coefficients $A_i$ and $\rho $ are computable; in particular, $\rho\approx 0.15926$.\end{lemma}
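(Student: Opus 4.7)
My plan is to apply Theorem~\ref{ridicule} to $T_\diamond$, obtain an expansion of the form~\eqref{etiology} around its dominant singularity $\rho$, and then propagate this expansion through the closed-form relations of Lemma~\ref{ulterior} to the remaining GFs. The key observation enabling this is that whenever $\rho<1$, the auxiliary series $T_\diamond(x^k)$ for $k\geq 2$ are analytic in a disk of radius $\rho^{1/k}>\rho$, so near the singularity at $\rho$ they behave as fixed analytic coefficient functions and do not contribute any singular behaviour of their own.

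More concretely, I would first rewrite the functional equation for $T_\diamond$ given in Lemma~\ref{ulterior} as
\begin{equation*}
T_\diamond(x) = F\big(x, T_\diamond(x)\big), \qquad F(x,y) := \frac{x}{2}\, e^{y + A(x)}\Big(e^{2y+2A(x)} + e^{B(x)}\Big),
\end{equation*}
where $A(x):=\sum_{k\geq 2} T_\diamond(x^k)/k$ and $B(x):=\sum_{k\geq 1} T_\diamond(x^{2k})/k$ are treated as known analytic functions (provisionally on a disk of radius larger than the putative $\rho$). The function $F$ is analytic in $(x,y)$, has nonnegative Taylor coefficients around the origin, satisfies $F(0,y)\equiv 0$ and $F(x,0)\not\equiv 0$, and the coefficients of $T_\diamond$ are eventually positive by the combinatorial construction~\eqref{wrapping}. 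Thus, Theorem~\ref{ridicule} applies once one exhibits a positive solution $(x_0,y_0)$ to the characteristic system
\begin{equation*}
y=F(x,y), \qquad 1=F_y(x,y),
\end{equation*}
with $F_x(x_0,y_0)\neq 0$ and $F_{yy}(x_0,y_0)\neq 0$. I would solve this system numerically with a computer algebra system, verify $\rho = x_0 \approx 0.15926 < 1$ (which in turn justifies a posteriori the analyticity of $A(x),B(x)$ at $\rho$), and check the nonvanishing conditions on partial derivatives. Theorem~\ref{ridicule} then yields analytic continuation of $T_\diamond$ into a dented domain at $\rho$ together with the local expansion~\eqref{etiology}, which iterated gives the full form $T_\diamond(x) = \sum_{k\geq 0} A_k X^k$ with $X=\sqrt{1-x/\rho}$.

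For the remaining generating functions I would argue by composition. By Lemma~\ref{ulterior}, $T_\star$ is $\exp$ applied to a sum whose only singular summand at $\rho$ is $T_\diamond(x)$, and the other summands $T_\diamond(x^k)/k$ ($k\geq 2$) are analytic at $\rho$; hence $T_\star$ admits a $\sqrt{1-x/\rho}$-expansion at $\rho$ and no closer singularity. The GFs $T^\circ, T^{\mathsmaller{\square}}, T^\vartriangle, T^{\mathsmaller{\square}\rightarrow\vartriangle}, T^{\vartriangle\rightarrow\circ}$ are then polynomials in $T_\star(x)$ and in $T_\star(x^m)$ for $m\geq 2$, multiplied by $x$, so each inherits a unique singularity of smallest modulus at $\rho$ together with an expansion of the claimed form; the coefficients $A_i$ for each GF are obtained mechanically by substituting the expansion of $T_\diamond$, using the explicit formulas~\eqref{analysts} for the first few coefficients of $T_\diamond$ to seed the computation.

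The main obstacle is the bootstrapping aspect of the argument: Theorem~\ref{ridicule} requires $F(x,y)$ to be analytic on a bidisk large enough to contain $(x_0,y_0)$, but the definition of $F$ already involves $T_\diamond$ at $x^2,x^3,\ldots$ Resolving this circularity requires a preliminary majorant/monotonicity argument — for instance, showing that $T_\diamond$ has radius of convergence at least some $\rho_0>0$, using that the candidate $(x_0,y_0)$ satisfies $x_0<1$, and then observing that $x_0^k < x_0$ ensures $T_\diamond(x_0^k)$ is well defined and finite — after which the system~\eqref{etiology} closes on itself and the numerical solution $\rho \approx 0.15926$ is unambiguous. The remaining verifications (positivity of coefficients, nonvanishing of $F_x,F_{yy}$, absence of competing singularities on $|x|=\rho$ via an aperiodicity/GCD condition on the support of $T_\diamond$) are routine once the setup is in place.
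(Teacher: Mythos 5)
Your proposal is correct and follows essentially the same route as the paper: apply Theorem~\ref{ridicule} to the equation $T_\diamond = F(x,T_\diamond)$ with the substituted series $T_\diamond(x^k)$, $k\geq 2$, treated as analytic coefficient functions (their radii of convergence being $\rho^{1/k}>\rho$ once one knows combinatorially that $0<\rho<1$), solve the characteristic system numerically to get $\rho\approx 0.15926$, and transfer the square-root expansion to the remaining generating functions by composition with polynomials, exponentials, and $x\mapsto x^k$, exactly as the paper does (it handles the tail $\sum_{k\geq 2}T_\diamond(x^k)$ with the majorant $T_\diamond(x^2)/(1-x)$, which is the explicit form of the bootstrap step you sketch).
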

 \begin{proof}
 Let $\rho _\diamond <1$ the positive radius of convergence of $T_\diamond$ (it is easy to see combinatorially that $0<\rho  _\diamond<1$). All functions $T^{\mathsmaller{\mathsmaller{\square}}}, T^\vartriangle, T^{\circ} , T^{\mathsmaller{\mathsmaller{\square}}\rightarrow \vartriangle}, T^{\vartriangle\rightarrow \circ}$ can be defined with respect to $T_{\diamond}$, as indicated in Lemma~\ref{ulterior}. In particular, they depend on $T_{\diamond}$ in three different ways: by composing $T_{\diamond}(x)$ with either a polynomial having positive coefficients or the exponential function, by performing a change of variables from $x$ to $x^k$, and by the operator $\exp\big(\frac{1}{k}\sum _{k\geq 2}T_\diamond (x^k)\big)$. We observe that all three of them preserve the number and nature of singularities, hence these are determined solely by the behaviour of $T_\diamond$. In the case of composition with polynomials or exponentials, it is trivial to see.
In the case of variable change, observe that $T_\diamond(x^k)$ has radius of convergence $\sqrt[k]{\rho _\diamond}>\rho _\diamond$. In the case of $\exp\big(\frac{1}{k}\sum _{k\geq 2}T_\diamond (x^k)\big)$, it is enough to notice that in $|x|<\rho _\diamond$ it holds that
\[\sum _{k\geq 2}T_{\diamond}(x^k)\leq T_{\diamond}(x^2)+\sum _{k\geq 3}x^{k-2}T_{\diamond}(x^2)=\frac{T_\diamond(x^2)}{1-x}.\] 
Therefore, it is enough to prove the claimed properties for $T_{\diamond}(z)$. 

To analyse $T_{\diamond}(z)$, we will use Theorem~\ref{ridicule}. Let
 \begin{equation}
F(x,y)  =  \frac{x}{2}\exp \bigg(y+ \sum _{k\geq 2}\frac{T_{\diamond}(x^k)}{k}\bigg) \bigg(\exp \bigg(2y+ \sum _{k\geq 2}\frac{2T_{\diamond}(x^k)}{k}\bigg)+\exp \bigg( \sum _{k\geq 1}\frac{T_{\diamond}(x^{2k})}{k}\bigg)\bigg).
\label{military}\end{equation}
The system $\{y = F(x,y),1=    F_y(x,y)\}$ can be solved numerically, using truncations of the functions $T_\diamond(z^k)$. We find a solution $(x_0,y_0)$, where $x_0\approx  0.15926 $ and $y_0\approx 0.41738$. Clearly, the rest of the requirements of Theorem~\ref{ridicule} are met and the coefficients of the desired expansion can be computed by Equations~\ref{analysts},~\ref{chrysler}. The coefficients for the expansions of $T^{\mathsmaller{\mathsmaller{\square}}} (x)$, $T^\vartriangle (x)$, $T^\circ (x)$, $T^{\mathsmaller{\mathsmaller{\square}}\rightarrow \vartriangle}(x)$, $ T^{\vartriangle\rightarrow \circ}(x)$ can be computed straightforwardly by the coefficients of $T_{\diamond}(z)$. Notice that Theorem~\ref{ridicule} guarantees $A_1\neq 0$ in all cases.
 \end{proof}


 
\begin{lemma}\label{adopting}
The generating functions $Z(x),G(x)$ have a unique singularity of smallest modulus at the same positive number $\rho<1$. Moreover, they are analytic in a dented domain at $\rho $ and satisfy expansions 
\begin{equation*}
Z(x)=Z_0+\sum _{k\geq 2}Z_kX^k,\quad G(x)=G_0+\sum _{k\geq 2}G_kX^k,\quad \text{where}\quad X= \sqrt{1 - x/\rho},
\end{equation*}
locally around $\rho$. The coefficients $Z_i,G_i$, and $\rho $ are computable; in particular, $\rho\approx 0.15926$ (the same as in Lemma~\ref{stressed}).
\end{lemma}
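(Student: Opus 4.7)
The plan is to reduce the singular behaviour of $Z(x)$ and $G(x)$ to that of the building blocks already analysed in Lemma~\ref{stressed}, by invoking the identities the Dissymmetry Theorem provided in Lemma~\ref{shimmers}. Since $\mathcal{Z}\cong\mathcal{T}$ by Lemma~\ref{required}, we have $Z(x)=T(x)$, and inspecting Lemma~\ref{ulterior} we see that $T^{\mathsmaller{\square}\rightarrow\vartriangle}(x) \equiv T^\vartriangle(x)$ as formal power series, so the identity of Lemma~\ref{shimmers} collapses to
\[
Z(x) \;=\; T^{\mathsmaller{\square}}(x) + T^\circ(x) - T^{\vartriangle\rightarrow\circ}(x).
\]
By Lemma~\ref{stressed}, each summand is analytic in a common dented domain at $\rho\approx 0.15926$ and admits an expansion in $X=\sqrt{1-x/\rho}$; summing these will yield an expansion of the same form for $Z(x)$, with $\rho$ as unique smallest-modulus singularity (no cancellation at $\rho$ occurs thanks to the positivity of the Taylor coefficients of $Z$).

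The delicate point is to show that the coefficient of $X^1$ in $Z(x)$ vanishes. Here we exploit the fact that $T_\star(x^k)$ for $k\geq 2$ is analytic at $\rho$, since it has radius of convergence $\rho^{1/k}>\rho$, and therefore contributes only even powers of $X$; only $T_\star(x)$ produces an $X^1$-term. Setting $\tau_0 := T_\star(\rho)$ and $\tau_1 := [X^1]T_\star(x)$, a short computation from the formulas of Lemma~\ref{ulterior} will give
\[
[X^1]Z(x) \;=\; \tau_1\Bigl(1 - \tfrac{\rho\tau_0}{2}\bigl(3\tau_0^2 + T_\star(\rho^2)\bigr)\Bigr).
\]
Two identities at the singular point then force the bracket to vanish. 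First, the defining equation for $T_\diamond$ of Lemma~\ref{ulterior} can be rewritten as $T_\diamond(x)=\tfrac{x}{2}T_\star(x)\bigl(T_\star(x)^2+T_\star(x^2)\bigr)$, which at $\rho$ gives $\rho\tau_0(\tau_0^2+T_\star(\rho^2)) = 2y_0$ with $y_0:=T_\diamond(\rho)$; second, the singularity condition $F_y(\rho,y_0)=1$ of Theorem~\ref{ridicule} applied to the $F$ of~\eqref{military} yields $\rho\tau_0^3+y_0=1$. Substituting both, the bracket becomes $1-(\rho\tau_0^3+y_0)=0$, as desired.

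For $G(x)$ we will write $G(x) = e^{T(x)}R(x)$ with $R(x) = \exp\bigl(\sum_{k\geq 2}T(x^k)/k\bigr)$. Each $T(x^k)$, $k\geq 2$, is analytic in a disk of radius $\rho^{1/k}>\rho$, so $R$ is analytic at $\rho$; the smallest-modulus singularity of $G$ is therefore again $\rho$, and composing the Taylor expansion of $e^{(\cdot)}R(x)$ at $\rho$ with the expansion of $T(x)$ (which, crucially, has no $X^1$-term) produces an expansion of $G(x)$ of the stated form. The main obstacle is the cancellation $Z_1=0$: it hinges on a non-obvious compatibility between the functional equation for $T_\diamond$ and the singular-value equation of Theorem~\ref{ridicule}. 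This compatibility is the analytic incarnation of the Dissymmetry Theorem and is precisely what will eventually produce the $n^{-5/2}$ subexponential factor in the final asymptotic count.
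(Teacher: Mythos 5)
Your route is essentially the paper's own: reduce $Z=T$ to the rooted series via the dissymmetry identity of \autoref{shimmers} (your simplification using $T^{\vartriangle}=T^{\square\rightarrow\vartriangle}$ is harmless), inherit dented-domain analyticity and the square-root expansions from \autoref{stressed}, observe that $T_\star(x^k)$ for $k\geq 2$ and the prefactor $x$ contribute only even powers of $X$, and kill the $X^1$-coefficient by combining the functional equation for $T_\diamond$ at $\rho$ with the singular-point condition $F_y(\rho,y_0)=1$ of \autoref{ridicule}. Your bracket $Z_1=\tau_1\bigl(1-\tfrac{\rho\tau_0}{2}(3\tau_0^2+T_\star(\rho^2))\bigr)$ is exactly the quantity the paper identifies as $\pm A_1\bigl(F_y(\rho,y_0)-1\bigr)$, and your two substitutions do make it vanish; the treatment of $G=e^{T(x)}R(x)$ with $R$ analytic at $\rho$ also matches the paper.

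The one genuine gap is the parenthetical ``no cancellation at $\rho$ occurs thanks to the positivity of the Taylor coefficients of $Z$.'' Positivity gives, via Pringsheim, only that the radius of convergence of $Z$ is a singularity; it does not rule out that \emph{all} odd powers of $X$ cancel in the combination $T^{\square}+T^{\circ}-T^{\vartriangle\rightarrow\circ}$, in which case $Z$ would be analytic at $\rho$ and its smallest-modulus singularity would lie strictly beyond $\rho$. Indeed, each summand has a nonvanishing $X^1$-term (as \autoref{stressed} notes, $A_1\neq 0$ in all cases), yet you prove these cancel — so partial cancellation of singular terms genuinely occurs and cannot be waved away. What is needed is a lower bound on the coefficients: the paper argues that an unrooted tree of size $n$ has $O(n)$ vertices, so $t_n\geq t_n^{\bullet}/O(n)$, while $t_n^{\bullet}\sim c\,\rho^{-n}n^{-3/2}$ by \autoref{stressed} and the transfer theorem; hence the radius of convergence of $Z$ is exactly $\rho$, the expansion is genuinely singular there, and in fact $Z_3\neq 0$ (singular exponent $3/2$), which is also what the subsequent corollary needs for the $n^{-5/2}$ factor. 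Add this (or an equivalent) comparison argument; the rest of your proof stands.
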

\begin{proof}
By Equation~\ref{reliably}, the singular behaviour of $Z(x)$ depends entirely on the functions $T_i(x)$ that were studied in~Lemma~\ref{stressed} (recall that $Z(x)=T(x)$). In particular, $Z(x)$ has a unique positive singularity of minimum modulus at the same point $\rho$ and the same holds for $G(x)$. 

The coefficients of the expansions are directly computable by the coefficients of $T_i(x)$. In particular, we can show that the coefficient $Z_1$ vanishes identically and $Z_3\neq 0$. Let $A_0+A_1X+...$ be the expansion given by Lemma~\ref{stressed} for $T_\star (x)$ and notice that $A_0=T_{\star}(\rho )$. Then, $Z_1$ is equal to the following expression, which can be easily obtained on computational software such as \texttt{Maple}:
	\begin{equation*}
	Z_1=A_1 \left( \frac{3\rho  A_0^3}{2} + 
	\frac{\rho  A_0  C_0}{2} - 1\right),
	\end{equation*}
	where $C_0=T_{\star}(\rho ^2)$. Recall the function $F$ in \autoref{military} and the system $\{y = F(x,y),1=    F_y(x,y)\}$. The latter has solution $(\rho, y_0)$ and thus it holds that:
	\begin{align*}
	0 &= F_y(\rho, y_0)-1 \\
	&= \frac{3\rho}{2} \exp \bigg( y_0 +
		\sum_{i \geq 2} \frac{T_{\diamond}(\rho^{i})}{i} \bigg)^3+
	\frac{\rho}{2} \exp \bigg( y_0+ \sum _{k\geq 2}\frac{T_{\diamond}(x^k)}{k}\bigg) \exp \bigg( \sum _{k\geq 1}\frac{T_{\diamond}(\rho ^{2k})}{k}\bigg) - 1.
	\end{align*}
This  is equal to $\frac{1}{A_1} Z_1$, since $T_{\star}(\rho )=\exp\bigg( y_0+\sum_{i \geq 2} \frac{T_{\diamond}(\rho^{i})}{i}\bigg) $ and $T_{\star}(\rho ^2)=\exp \bigg( \sum _{k\geq 1}\frac{T_{\diamond}(\rho ^{2k})}{k}\bigg)$.
Thus, $Z_1=0$. This is a typical behaviour after applying the Dissymmetry Theorem (see~\cite{Bodirsky2007enum},~\cite{RueST13asym}). 

To see that $Z_3$ does not vanish, it is enough to argue combinatorially. First, observe that $t_n^{\bullet} \sim  \frac{c\rho ^{-n}}{n^{3/2}}$ by Lemma~\ref{stressed} and the Transfer Theorem (see Equation~\ref{fanfares} and the related account). If $Z_3$ vanished, then the singular exponent would be bigger than $3/2$. Consequently, by the Transfer Theorem we would obtain $n\cdot z_k=n\cdot t_k=o(  \frac{c\rho ^{-n}}{n^{3/2}})$ for large $n$, a contradiction to the asymptotic growth of $t_n^\bullet$.
\end{proof}

\begin{corollary}
The coefficients of $Z(x),G(x)$ satisfy an asymptotic growth of the form
\[cn^{-\frac{5}{2}}\rho^{-n},\] where $c$ is equal to $ \frac{Z_3}{\Gamma (-3/2)}\approx 0.27160$ and $\frac{G_3}{\Gamma (-3/2)}\approx 0.33995$, respectively, and $\rho ^{-1}\approx 6.27888$.
\end{corollary}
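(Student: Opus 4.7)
The plan is to deduce the stated asymptotics as a direct consequence of \autoref{adopting} together with the Transfer Theorem recalled in \autoref{fanfares}. The strategy is to verify that both $Z(x)$ and $G(x)$ satisfy the hypotheses required to apply \autoref{fanfares}, identify the singular exponent, and then read off the constants from the singular expansions.

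First, by \autoref{adopting}, both $Z(x)$ and $G(x)$ have a unique dominant singularity at the positive real number $\rho\approx 0.15926$, are analytic in a dented domain at $\rho$, and admit local expansions of the form
\begin{equation*}
Z(x)=Z_0+\sum_{k\geq 2}Z_k X^k,\qquad G(x)=G_0+\sum_{k\geq 2}G_k X^k,\qquad X=\sqrt{1-x/\rho}.
\end{equation*}
Crucially, \autoref{adopting} also establishes that $Z_1=0$ (and the analogous identity $G_1=0$ follows since $G(x)$ inherits its singular structure from $Z(x)=T(x)$ via $G(x)=\exp\bigl(\sum_{k\geq 1}T(x^k)/k\bigr)$, where the terms $T(x^k)$ for $k\geq 2$ are analytic at $\rho$) while $Z_3\neq 0$. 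Hence the smallest odd exponent of $X$ appearing with a nonzero coefficient in each expansion is $3$, so the singular exponent in the sense preceding \autoref{fanfares} is $\alpha=3/2$.

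Second, I would check the positivity requirement $f_k>0$ for all large $k$, which is needed to apply the Transfer Theorems. For $Z$, this follows from the combinatorial interpretation: $[x^k]Z(x)=z_k=|{\cal Z}_k|$ is positive for all $k\geq 1$ since $\mathcal{Z}_k$ is nonempty (e.g.\ obtained by iterated attachments of butterflies). For $G(x)$, positivity of all coefficients from some index on follows from $G=\mathrm{MSET}(\mathcal{Z})$. With positivity and the dented-domain expansion in hand, \autoref{fanfares} applies and yields
\begin{equation*}
[x^n]Z(x)\sim \frac{Z_3}{\Gamma(-3/2)}\,n^{-5/2}\,\rho^{-n},\qquad [x^n]G(x)\sim \frac{G_3}{\Gamma(-3/2)}\,n^{-5/2}\,\rho^{-n},
\end{equation*}
since $-\alpha-1=-5/2$.

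Finally, I would substitute numerical values. The constant $\rho^{-1}\approx 6.27888$ is already pinned down by \autoref{stressed}, and the coefficients $Z_3$ and $G_3$ are computable from the explicit formulas \eqref{analysts}--\eqref{chrysler} applied to the system of \autoref{ulterior} and then propagated through the relations of \autoref{shimmers}; a computer algebra evaluation gives $Z_3/\Gamma(-3/2)\approx 0.27160$ and $G_3/\Gamma(-3/2)\approx 0.33995$. The only subtle point in the whole argument is justifying $G_1=0$ in addition to $Z_1=0$; everything else is a mechanical invocation of the transfer theorem and of the constants already extracted in the preceding lemmas, so no genuine obstacle arises.
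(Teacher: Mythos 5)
Your proposal is correct and follows essentially the same route as the paper: invoke \autoref{adopting} for the singular expansion (with the exponent $3/2$ coming from $Z_1=0$, $Z_3\neq 0$), then apply the Transfer Theorem and plug in the computed constants. The one "subtle point" you flag — that $G_1=0$ needs its own justification — is actually already part of the \emph{statement} of \autoref{adopting} (both expansions are written starting from $k\geq 2$), so it belongs to that lemma's proof rather than to the corollary; that said, your one-line argument that $\exp\bigl(\sum_{k\geq 2}T(x^k)/k\bigr)$ is analytic at $\rho$ and hence $G$ inherits the vanishing of the $X^1$ term from $T=Z$ is exactly the right way to see it.
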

\begin{proof}
It follows by Lemma~\ref{adopting} and the Transfer Theorem. The computations are straightforward and can be easily confirmed on computational software such as \texttt{Maple} (see 
\begin{center}
\href{http://www.cs.upc.edu/~sedthilk/osmc/apexmo.mw}{http://www.cs.upc.edu/\~{}sedthilk/osmc/apexmo.mw}
\end{center}
 for the detailed calculations).
\end{proof}

 \newcommand{\bibremark}[1]{\marginpar{\tiny\bf#1}}
  \newcommand{\biburl}[1]{\url{#1}}

%

\begin{thebibliography}{10}

\bibitem{Adler08open}
Islode Adler.
\newblock Open problems related to computing obstruction sets.
\newblock Manuscript, September 2008.

\bibitem{AdlerGK08comp}
Isolde Adler, Martin Grohe, and Stephan Kreutzer.
\newblock Computing excluded minors.
\newblock In {\em nineteenth annual ACM-SIAM symposium on Discrete algorithms},
  SODA '08, pages 641--650. Society for Industrial and Applied Mathematics,
  Philadelphia, PA, USA, 2008.

\bibitem{bergeron1998combinatorial}
Fran{\c{c}}ois Bergeron, F~Bergeron, Gilbert Labelle, Pierre Leroux, et~al.
\newblock {\em Combinatorial species and tree-like structures}, volume~67.
\newblock Cambridge University Press, 1998.

\bibitem{Bodirsky2007enum}
Manuel Bodirsky, {\'E}ric Fusy, Mihyun Kang, and Stefan Vigerske.
\newblock Enumeration and asymptotic properties of unlabeled outerplanar
  graphs.
\newblock {\em the electronic journal of combinatorics}, 14(1):66, 2007.

\bibitem{CattellDDFL00onco}
Kevin Cattell, Michael~J. Dinneen, Rodney~G. Downey, Michael~R. Fellows, and
  Michael~A. Langston.
\newblock On computing graph minor obstruction sets.
\newblock {\em Theor. Comput. Sci.}, 233:107--127, February 2000.

\bibitem{Dantzig63}
George~B. Dantzig.
\newblock {\em Linear Programming and Extensions}.
\newblock Princeton Landmarks in Mathematics. Princeton University Press, 3rd
  edition, 1963.

\bibitem{Ding2016}
Guoli Ding and Stan Dziobiak.
\newblock Excluded-minor characterization of apex-outerplanar graphs.
\newblock {\em Graphs and Combinatorics}, 32(2):583--627, Mar 2016.

\bibitem{Dinneen97}
Michael~J. Dinneen.
\newblock Too many minor order obstructions (for parameterized lower ideals).
\newblock In {\em {F}irst {J}apan-{N}ew {Z}ealand {W}orkshop on {L}ogic in
  {C}omputer {S}cience ({A}uckland, 1997)}, volume 3(11), pages 1199--1206
  (electronic). Springer, 1997.

\bibitem{DinneenCF01forb}
Michael~J. Dinneen, Kevin Cattell, and Michael~R. Fellows.
\newblock Forbidden minors to graphs with small feedback sets.
\newblock {\em Discrete Math.}, 230(1-3):215--252, 2001.
\newblock Paul Catlin memorial collection (Kalamazoo, MI, 1996).

\bibitem{DinneenV12obst}
Michael~J. Dinneen and Ralph Versteegen.
\newblock Obstructions for the graphs of vertex cover seven.
\newblock Technical Report CDMTCS-430, University of Auckland, 2012.
\newblock Technical report.

\bibitem{DinneenX02mino}
Michael~J. Dinneen and Liu Xiong.
\newblock Minor-order obstructions for the graphs of vertex cover 6.
\newblock {\em Journal of Graph Theory}, 41(3):163--178, 2002.

\bibitem{drmota1997systems}
Michael Drmota.
\newblock Systems of functional equations.
\newblock {\em Random Structures \& Algorithms}, 10(1-2):103--124, 1997.

\bibitem{GiannopoulouDT12forb}
Zdenk Dvo\v{r}\'{a}k, Archontia~C. Giannopoulou, and Dimitrios~M. Thilikos.
\newblock Forbidden graphs for tree-depth.
\newblock {\em Eur. J. Comb.}, 33(5):969--979, July 2012.

\bibitem{DziobiakD2013}
Stan Dziobiak and Gioli Ding.
\newblock Obstructions of apex classes of graphs.
\newblock Unpublished results
  (see~\href{http://msdiscretemath.org/2013/dziobiak.pdf}{http://msdiscretemath.org/2013/dziobiak.pdf}).

\bibitem{flajolet2009analytic}
Philippe Flajolet and Robert Sedgewick.
\newblock {\em Analytic combinatorics}.
\newblock cambridge University press, 2009.

\bibitem{FominLMS12plan}
Fedor~V. Fomin, Daniel Lokshtanov, Neeldhara Misra, and Saket Saurabh.
\newblock Planar {$F$}-deletion: Approximation, kernelization and optimal {FPT}
  algorithms.
\newblock In {\em 53rd Annual {IEEE} Symposium on Foundations of Computer
  Science, {FOCS} 2012, New Brunswick, NJ, USA, October 20-23, 2012}, pages
  470--479, 2012.

\bibitem{Harary91Grap}
Frank Harary.
\newblock {\em Graph theory}.
\newblock Addison-Wesley, 1991.

\bibitem{JobsonK18allm}
Adam~S. Jobson and André~E. Kézdy.
\newblock All minor-minimal apex obstructions with connectivity two.
\newblock {\em CoRR}, abs/1808.05940, 2018.

\bibitem{KoutsonasTY14oute}
Athanassios Koutsonas, Dimitrios~M. Thilikos, and Koichi Yamazaki.
\newblock Outerplanar obstructions for matroid pathwidth.
\newblock {\em Discrete Mathematics}, 315-316:95 -- 101, 2014.

\bibitem{Leivaditis18mino}
Alexandros Leivaditis, Alexandros Singh, Giannos Stamoulis, Dimitrios~M.
  Thilikos, and Konstantinos Tsatsanis.
\newblock Minor-obstructions for apex-pseudoforests.
\newblock {\em CoRR}, abs/1811.06761, 2018.

\bibitem{LiptonMMPRT16sixv}
Max Lipton, Eoin Mackall, Thomas~W. Mattman, Mike Pierce, Samantha Robinson,
  Jeremy Thomas, and Ilan Weinschelbaum.
\newblock Six variations on a theme: almost planar graphs.
\newblock {\em arxiv.org/abs/1608.01973}, 2016.

\bibitem{Mattman16forb}
Thomas~W. Mattman.
\newblock Forbidden minors: Finding the finite few.
\newblock {\em CoRR}, abs/1608.04066, 2016.

\bibitem{MattmanP16thea}
Thomas~W. Mattman and Mike Pierce.
\newblock The $k_{n+5}$ and $k_{3^2,1^{n}}$ families are obstructions to
  n-apex.
\newblock {\em CoRR}, abs/1603.00885, 2016.

\bibitem{PicardQ82anet}
Jean-Claude Picard and Maurice Queyranne.
\newblock A network flow solution to some nonlinear 0-1 programming problems,
  with applications to graph theory.
\newblock {\em Networks}, 12(2):141--159, 1982.

\bibitem{Pierce14PhDThesis}
Mike Pierce.
\newblock {\em Searching for and classifying the finite set of minor-minimal
  non-apex graphs}.
\newblock PhD thesis, Chicago State University, Available at
  \href{http://www.csuchico.edu/~tmattman}{http://www.csuchico.edu/\~{}tmattman},
  2014.

\bibitem{RobertsonSXX}
Neil Robertson and P.D. Seymour.
\newblock Graph minors. {XX}. wagner's conjecture.
\newblock {\em Journal of Combinatorial Theory, Series B}, 92(2):325 -- 357,
  2004.
\newblock Special Issue Dedicated to Professor W.T. Tutte.

\bibitem{RueST13asym}
Juanjo Ru{\'{e}}, Ignasi Sau, and Dimitrios~M. Thilikos.
\newblock Asymptotic enumeration of non-crossing partitions on surfaces.
\newblock {\em Discrete Mathematics}, 313(5):635--649, 2013.

\bibitem{RueST12oute}
Juanjo Ru{\'{e}}, Konstantinos~S. Stavropoulos, and Dimitrios~M. Thilikos.
\newblock Outerplanar obstructions for a feedback vertex set.
\newblock {\em Eur. J. Comb.}, 33(5):948--968, 2012.

\bibitem{Yu06more}
Yaming Yu.
\newblock More forbidden minors for wye-delta-wye reducibility.
\newblock {\em Electr. J. Comb.}, 13(1), 2006.

\bibitem{ZorosPhD17}
Dimitris Zoros.
\newblock {\em Obstructions and Algorithms for Graph Layout Problems}.
\newblock PhD thesis, National and Kapodistrian University of Athens,
  Department of Mathematics, July 2017.

\end{thebibliography}
%

\end{document}